\DeclareMathAlphabet\mathbfcal{OMS}{cmsy}{b}{n}
\newcommand{\noop}[1]{}
\theoremstyle{definition}
\newtheorem{definition}{Definition}[section]
\theoremstyle{plain}
\newtheorem{theoreme}[definition]{Theorem}
\newtheorem{fact}[definition]{Fact}
\newtheorem*{theoreme*}{Theorem}
\newtheorem{corollaire}[definition]{Corollary}
\newtheorem{proposition}[definition]{Proposition}
\newtheorem{lemme}[definition]{Lemma}
\newtheorem{Aff}{Claim}
\theoremstyle{remark}
\newtheorem{remarque}[definition]{Remark}
\newtheoremstyle{case}{}{}{}{}{}{}{ }{}
\theoremstyle{case}
\newtheorem*{case}{{\bf Case}}
\numberwithin{equation}{section}
\title{NIP, and $\textrm{NTP}_2$ division rings of prime characteristic}
\date{}							
\author{Cédric Milliet}
\address{Département de mathématiques, Université de Mons,\newline
 Le Pentagone, 20, Place du Parc,\newline
 B-7000 Mons, Belgique}
\email[]{cedric.milliet@gmail.com}
\subjclass[2010]{14R99, 14A22, 12E15, 03C45, 03C60}
\keywords{Division ring, model theory, independence property, tree property of the second kind}
\thanks{Many thanks to Franziska Jahnke for answering questions and pointing at \cite[Theorem 5.2]{JK2015a} and \cite[Theorem 3.10]{JK2015b}, and to the referee for his or her patient readings and suggestions.}
\newcommand{\C}{{\rm C}}
\newcommand{\Z}{{\rm Z}}
\newcommand{\V}{{\rm V}}
\newcommand{\I}{{\rm I}}
\newcommand{\R}{{ D}}
\newcommand{\N}{{\rm N}}
\newcommand{\ima}{\mathrm{Im}}
\newcommand{\id}{{\rm id}}
\newcommand{\FR}{{\rm Fix}(\sigma)}
\newcommand{\Rs}{{\R[\sigma]}}
\newcommand{\NIP}{{\rm NIP}}
\newcommand{\NTP}{{${\rm NTP}_2$}}
\newcommand{\Rn}{\R[\sigma,n]}
\newcommand{\cl}{{\rm cl}}
\begin{document}

\begin{abstract}Combining a characterisation by Bélair, Kaplan, Scanlon and Wagner of certain NIP valued fields of characteristic $p$ with Dickson's construction of cyclic algebras, we provide examples of noncommutative \NIP{} division ring of  characteristic~$p$ and show that an \NIP{} division ring of characteristic $p$ has finite dimension over its centre, in the spirit of Kaplan and Scanlon's proof that infinite NIP fields have no Artin-Schreier extension. The result extends to \NTP{} division rings of characteristic $p$, using Chernikov, Kaplan and Simon's \cite{CKS2015}. We also highlight consequences of our proofs that concern \NIP{} or simple difference fields.\end{abstract}

\maketitle

\section{Introduction}

Macintyre proved any $\omega$-stable field to be either finite or algebraically closed \cite[Theorem 1]{Macintyre1971}. This was generalised by Cherlin and Shelah to superstable fields \cite[Theorem 1]{CS1980}. It follows that a superstable division ring is a field \cite[Theorem p. 99]{Cherlin1977}. It was observed around 1991 that a division ring interpretable in a bounded PAC field $K$ (\emph{e.g.} a pseudo-finite field) is definably isomorphic to a finite field extension of $K$, and in particular commutative \cite[Theorem 9.1]{Hrushovski2002}. Later on, it was shown in \cite[Theorem 5.1]{PSW1998} that any supersimple division ring is a field. In another direction, Pillay proved that an infinite field  definable in an \mbox{o-minimal} structure is either real closed or algebraically closed \cite[Theorem 3.9]{Pillay1988}, and such a field has characteristic 0. It is shown in \cite[Theorem 1.1]{OPP1996} that a division ring definable in an o-minimal expansion of a real closed field $R$ is definably isomorphic to either $R$, $R\sqrt{-1}$ or the quaternions over~$R$. This was generalised to division rings definable in any o-minimal structure in \cite[Theorem 4.1]{PS1999}. A context that includes (almost) all the abovementioned structures is the one of superrosy structure, endowed with an abstract notion of ordinal valued rank on definable sets, preserved under definable bijections and satisfying Lascar's inequalities. It is shown in \cite[Theorem 2.9]{HP2018} that a superrosy division ring has finite dimension over its centre.

More can be said in characteristic $p$, even in the absence of a well-behaved global rank. It is known that a stable division ring of characteristic $p$ is a finite dimensional algebra over its centre \cite[Theorem 2.1]{Milliet2011}. Whereas the only known stable division rings are commutative fields (the conjecture that stable fields are separably closed implies that stable division rings of characteristic~$p$ are commutative), Hamilton's Quaternions over the real or 2-adic numbers are noncommutative examples of \NIP{} division rings of characteristic~$0$. The paper exhibits noncommutative examples of \NIP{} division rings of characteristic~$p$ (Theorem~\ref{ex1}), provides another simple proof that a stable division ring of characteristic $p$ has finite dimension over its centre (Fact~\ref{stable}) and shows that the same conclusion holds for an \NIP{} division ring of characteristic $p$ (Theorem~\ref{gegen}).

The proof of Theorem~\ref{gegen} closely follows ideas of Kaplan and Scanlon's \cite[Theorem 4.3]{KSW2011} stating that an infinite \NIP{} field of characteristic $p$ does not have any proper Artin-Schreier extension. Our guiding line is the reminiscence from superstability that a well-behaved definable group morphism with a ``small'' kernel should have a ``large'' image. To achieve that, a Zariski dimension theory is developed in \cite{Milliet2018} for subgroups of $(\R^n,+)$ defined over a division ring~$\R$ by linear equations involving a ring morphism $\sigma$. This dimension on a class of quantifier-free definable sets replaces the absence of a well-behaved model-theoretic rank. Sets of dimension zero include finite sets, but also $\FR$ and right affine spaces of finite $\FR$-dimension.

Eventually, Using Chernikov, Kaplan and Simon's descending chain condition for \NTP{} groups \cite[Theorem 2.4]{CKS2015}, as well as the same authors' generalisation of the definable case of Wagner's \cite[Theorem 3.2]{KSW2011}, stating that an \NTP{} field has only finitely many proper Artin-Schreier extensions \cite[Theorem 3.1]{CKS2015}, we extend Theorem~\ref{gegen} to the case of \NTP{} division rings of characteristic~$p$ (Theorem~\ref{T:N}), which has the unexpected consequence that the centre of an infinite \NTP{} division ring is infinite. Examples of strictly \NTP{} fields of characteristic $0$ are given in \cite{CKS2015}, \cite{CH2014} and \cite{Montenegro2017}, and corresponding examples in characteristic $p$ seem to be unknown.

We begin by recalling the definition of an \NIP{} structure. Given a natural number $k\in\mathbf N$ and a structure $(M,L)$, an $L$-formula $\phi(x,\bar y)$ has the \emph{$k$-independence property} if there are tuples $(a_1,\dots,a_k)$ and $\left(\bar b_J\colon J\subset\{1,\dots,k\}\right)$ in $M$ such that for any $i<k+1$ and $J\subset\{1,\dots,k\}$, \[\left(M\models\phi(a_i,\bar b_J)\right)\iff i\in J.\]

\begin{definition}[Shelah]A structure $(M,L)$ is \NIP{} (a shorthand for ``not the independence property'') if for every $L$-formula $\phi(x,\bar y)$, there is a natural number $k\in\mathbf N$ such that $\phi$ does not have the $k$-independence property.\end{definition}

Groups which are uniformly definable in an \NIP{} structure satisfy the following Noetherian like condition (see \cite[Lemme 1.3]{Poizat1987} or \cite[Theorem 1.0.5]{Wagner1997} for a proof), which seems to have appeared following \cite[p. 270]{BS1976}.

\begin{fact}[\NIP{} descending chain condition]\label{BS}In an \NIP{} group, to any formula $\phi(x,\bar y)$ is associated a natural number $n\in\mathbf N$ such that the intersection of any {\bf finite} family $\{G_i\colon i<k\}$ of subgroups defined respectively by the formulas $\{\phi(x,\bar a_i)\colon i<k\}$ be the intersection of at most $n$ among them.\end{fact}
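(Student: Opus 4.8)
The plan is a standard Baldwin--Saxl style argument by contradiction: if no bound $n$ works for the formula $\phi(x,\bar y)$, I produce, for every $m\in\mathbf N$, a configuration witnessing the $m$-independence property, contradicting the hypothesis that the ambient structure is \NIP{}.

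Assume then that for every $n$ there is a finite family of subgroups, each of the form $\phi(M,\bar a)$, whose intersection equals no sub-intersection of size at most $n$. Fix $m$ and take such a family for $n=m$; pass to a subfamily $G_1,\dots,G_{m'}$ of \emph{minimal} cardinality with the same intersection. Then $m'>m$ (else that subfamily would itself be a sub-intersection of size $\le m$ equal to the total one), and by minimality each $G_i$ is \emph{essential}, i.e.\ $\bigcap_{j\neq i}G_j\not\subseteq G_i$, while each $G_i$ is still defined by some instance $\phi(x,\bar a_i)$ of $\phi$. (Essentiality of a given member is preserved on passing to a subfamily containing it, since dropping subgroups only enlarges $\bigcap_{j\neq i}G_j$; so I may keep $m$ of them and relabel, obtaining $m$ essential subgroups for every $m$.) Now use essentiality: for each $i\le m$ pick $a_i\in\bigl(\bigcap_{j\neq i}G_j\bigr)\smallsetminus G_i$, and for each $J\subseteq\{1,\dots,m\}$ put $b_J:=\prod_{i\notin J}a_i$, the product taken in a fixed order. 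If $k\in J$, every factor $a_i$ of $b_J$ has $i\neq k$, hence lies in $G_k$, so $b_J\in G_k$. If $k\notin J$, then $a_k$ occurs among the factors and $b_J=g\,a_k\,h$ with $g,h\in G_k$ (products of the remaining $a_i$, all of index $\neq k$); were $b_J$ in $G_k$ we would get $a_k=g^{-1}b_Jh^{-1}\in G_k$, contradicting the choice of $a_k$. Hence $M\models\phi(b_J,\bar a_k)$ if and only if $k\in J$.

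For every $m$ this provides tuples $\bar a_1,\dots,\bar a_m$ together with elements $(b_J:J\subseteq\{1,\dots,m\})$ showing that $\phi$, regarded as a formula whose object variable is the parameter tuple $\bar y$ and whose parameter is the former object variable $x$, has the $m$-independence property. Since that is again an $L$-formula with a partition of its variables, it cannot have the $m$-independence property for all $m$ when $(M,L)$ is \NIP{}, a contradiction; the same conclusion also follows directly from the fact that the independence property is insensitive to exchanging object and parameter variables. I do not expect a serious obstacle here: the only steps needing care are the extraction of the minimal subfamily (so that the surviving subgroups are genuinely essential yet still instances of the single formula $\phi$) and keeping track of which partition of the variables of $\phi$ ends up carrying the independence property; the combinatorial core — coding a subset $J$ by the product of the ``missing'' elements $a_i$ — is the classical trick and goes through verbatim, the ambient group not needing to be abelian.
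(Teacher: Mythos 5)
Your argument is correct and is exactly the classical Baldwin--Saxl argument that the paper invokes by citation (Poizat's Lemme 1.3 / Wagner's Theorem 1.0.5): extract a minimal subfamily, use essentiality to pick $a_i\in\bigcap_{j\neq i}G_j\smallsetminus G_i$, code subsets by products of the missing $a_i$, and conclude via the standard symmetry of the independence property under exchanging object and parameter variables. No gaps.
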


\section{Examples of {NIP} division rings of prime characteristic}

\begin{theoreme}\label{ex1}There are noncommutative \NIP{} division rings of every characteristic.\end{theoreme}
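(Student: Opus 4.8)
The plan is to realise a noncommutative \NIP{} division ring of prescribed characteristic as a finite-dimensional central division algebra over a carefully chosen \NIP{} valued field, obtained from Dickson's cyclic algebra construction. The point of departure is the elementary observation that a finite-dimensional algebra $D$ over a field $K$ is interpretable in the field $(K;+,\cdot)$ --- fix a $K$-basis of $D$ and express multiplication through the resulting structure constants --- so $D$, and \emph{a fortiori} its underlying ring, is \NIP{} as soon as $(K;+,\cdot)$, or any expansion of it, is. The whole problem thus reduces to exhibiting, in each characteristic, an \NIP{} field (or valued field) $K$ carrying a nonsplit cyclic algebra. In characteristic $0$ one could simply quote Hamilton's quaternions over $\mathbb{R}$, which are interpretable in the o-minimal, hence \NIP{}, field $\mathbb{R}$; but I would prefer a uniform construction.

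Fix the target characteristic ($0$ or a prime $p$), let $F_0$ be an algebraically closed field of that characteristic, and put $K = F_0((t_1))((t_2))$, the iterated field of Laurent series, equipped with its natural Henselian valuation $v$ (say the rank-two one, with residue field $F_0$ and value group $\mathbb{Z}^2$ ordered lexicographically, or the $t_2$-adic one, with residue field $F_0((t_1))$; both are Henselian). Its residue field is algebraically closed, hence \NIP{}, and its value group is an ordered abelian group, hence \NIP{}; an Ax--Kochen--Ershov style transfer for \NIP{} valued fields, together with the characterisation of \NIP{} Henselian valued fields of equal characteristic recalled in the introduction, then shows that $(K,v)$ is \NIP{}. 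Next pick a prime $n \neq \mathrm{char}\,K$. Since $F_0$ contains a primitive $n$-th root of unity $\zeta$, the Kummer extension $L = K(t_1^{1/n})$ is cyclic of degree $n$ over $K$, with a generator $\sigma$ of $\mathrm{Gal}(L/K)$ sending $t_1^{1/n}$ to $\zeta\, t_1^{1/n}$. Finally form Dickson's cyclic algebra $D = (L/K,\sigma,t_2) = \bigoplus_{i=0}^{n-1} L z^i$, with $z^n = t_2$ and $z x = \sigma(x) z$ for $x\in L$: a central simple $K$-algebra of dimension $n^2$, which is noncommutative because $z\, t_1^{1/n} = \zeta\, t_1^{1/n}\, z \neq t_1^{1/n}\, z$.

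It remains to see that $D$ is a division ring, equivalently that $t_2 \notin N_{L/K}(L^\times)$, and here the valuation does the work. The extension $L/K$ is totally ramified in the ``$t_1$-direction'' and unramified in the ``$t_2$-direction''; writing $w$ for the (unique, by Henselianity) extension of $v$ to $L$, one gets $v\bigl(N_{L/K}(L^\times)\bigr) = n\cdot w(L^\times)$, a proper subgroup of $v(K^\times)$ that does not contain $v(t_2)$, since $n$ is prime to $\mathrm{char}\,K$ and $v(t_2)$ lies in the complementary, unramified part; equivalently, the tame symbol of $(t_1,t_2)$ is a primitive $n$-th root of unity. Hence the class of $t_2$ in $K^\times/N_{L/K}(L^\times)$ is nontrivial, so $D$ is nonsplit, and as $n$ is prime, a nonsplit central simple algebra of degree $n$ is automatically a division ring. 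Being a finite-dimensional $K$-algebra, $D$ is interpretable in $(K,v)$, hence \NIP{}; and it is noncommutative, which is the desired example.

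The principal difficulty is the choice of $K$, which must satisfy two conflicting demands. It has to be \NIP{} --- and by Kaplan and Scanlon's theorem an infinite \NIP{} field of characteristic $p$ is already forced to be Artin--Schreier closed, while the residue field and value group of any Henselian valuation on it must themselves be tame enough; in particular naive choices such as $\mathbb{F}_p((t))$ are excluded, and one is compelled to work with valued fields falling under the cited characterisation (and plausibly with $p$-divisible value groups, or with maximal or spherically complete models). At the same time $K$ must carry a nonsplit cyclic algebra, so its Brauer group cannot be too small: this rules out $C_1$-fields such as $F_0((t))$ with $F_0$ algebraically closed, which is precisely why the power-series construction has to be iterated once. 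Granting a suitable $K$, the only genuinely computational step is the verification that the cyclic algebra does not collapse --- the norm, or tame-symbol, computation above; everything else is bookkeeping.
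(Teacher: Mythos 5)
Your overall architecture --- an \NIP{} Henselian base field $K$, a cyclic algebra $D=(L/K,\sigma,\alpha)$ interpretable in $K$ via structure constants, and a valuation-theoretic verification that $\alpha$ is not a norm --- is exactly the paper's, and your norm (tame-symbol) computation is fine as far as it goes. The gap is in the choice of $K$: in characteristic $p$, which is the only case of real interest here, the field $K=F_0((t_1))((t_2))$ is \emph{not} \NIP{}. Its value group $\mathbf Z^2$ is not $p$-divisible, so the characterisation you invoke (Fact~\ref{Bel}, due to B\'elair, Kaplan, Scanlon and Wagner) returns the opposite verdict; more directly, $x^p-x=t_2^{-1}$ has no solution in $K$ (a solution would need $t_2$-valuation $-1/p$), so $K$ admits a proper Artin--Schreier extension, which Fact~\ref{KSW} forbids for an infinite \NIP{} field of characteristic $p$. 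Your closing paragraph correctly diagnoses this tension (``one is compelled to work with \dots\ $p$-divisible value groups''), but the construction in the body of your argument does not act on the diagnosis, so the proof does not go through as written; only the characteristic-$0$ instance survives, and that case was already settled by Hamilton's quaternions over $\mathbf R$.

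The paper's resolution of precisely this tension is the one idea your proposal is missing. Replace $\mathbf Z$ by the $p$-divisible group $\Gamma=\left\langle 1/p^i\colon i\in\mathbf N\right\rangle$ and use Hahn series: $k=\mathbf F_p^{alg}((x^\Gamma))$ is maximal, with algebraically closed residue field and $p$-divisible value group, hence \NIP{} by Fact~\ref{Bel}, and likewise for $F=k((t^\Gamma))$. The point is that $\Gamma$, though $p$-divisible, is \emph{not} $\ell$-divisible for a prime $\ell\neq p$ (take $\ell=2$ if $p$ is odd, $\ell=3$ if $p=2$), and this residual ramification, together with a choice of $\alpha\in k$ admitting no $\ell$-th root in $k$, is what keeps the degree-$\ell$ cyclic algebra $\left(F(t^{1/\ell})/F,\sigma,\alpha\right)$ nonsplit; the paper's explicit computation with leading monomials plays the role of your tame-symbol argument. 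So the fix is not a different method but a different field: once $F_0((t_1))((t_2))$ is replaced by $k((t^\Gamma))$ and $t_2$ by an element of the residue field level without an $\ell$-th root, your argument essentially becomes the paper's.
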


\begin{proof}We recall Dickson's construction of \emph{cyclic algebras} as explicated in \cite[p. 229]{Lam1991}. Let $K/F$ be a Galois extension with cyclic Galois group ${\rm Gal}(K/F)$ generated by an automorphism $\sigma$ of order $s=\dim_F K$. Fixing a nonzero element $\alpha\in F$ and a symbol $x$, we let \[D=K\cdot 1\oplus K\cdot x\oplus\cdots\oplus K\cdot x^{s-1},\] and multiply elements in $D$ by using the distributive law, and the two rules \[x^s=\alpha,\quad x\cdot a=\sigma(a)x\quad \text{(for any $a\in K$)}.\] As $F\subset \Z(D)$, the ring $D$ is an $F$-algebra, of dimension $s^2$. This algebra is denoted by $(K/F,\sigma,\alpha)$, and is called the \emph{cyclic algebra associated with $(K/F,\sigma)$ and $\alpha\in F\setminus\{0\}$}. Let $\N_{K/F}\colon K^\times\rightarrow F^\times$ denote the \emph{norm map} of the extension $K/F$ defined by \[\N_{K/F}(a)=\prod_{\tau\in{\rm Gal}(K/F)}\tau(a).\] In general, $D=(K/F,\sigma,\alpha)$ need not be a division algebra, but one has:

\begin{fact}[\protect{\cite[Corollary 14.8]{Lam1991}}]\label{LamD}Suppose $s$ is a prime number. Then $D=(K/F,\sigma,\alpha)$ is a division algebra if and only if $\alpha\notin \N_{K/F}(K^\times)$.\end{fact}

Now let $p$ be a prime number different from $2$, let $\Gamma$ be the ordered additive subgroup $\left\langle 1/p^i\colon i\in\mathbf N\right\rangle$ of $\mathbf R$, and consider an \NIP{} perfect field $k$ of characteristic $p$ having an element $\alpha\in k$ with no square root in $k$. For instance, using the following Fact~\ref{Bel} from \cite[Theorem 5.9]{KSW2011} (and from \cite[Corollaire 7.5]{Bel1999}), one may consider for $k$ the field $\mathbf F_p^{alg}((x^\Gamma))$ of formal Hahn series $\displaystyle\sum a_\gamma x^\gamma$ having a well ordered support in $\Gamma$ and coefficients $a_\gamma\in\mathbf F^{alg}_p$ and take $\alpha=x$.

\begin{fact}[B\'elair, Kaplan, Scanlon and Wagner]\label{Bel}Let $(F,v)$ be an algebraically maximal valued field of characteristic $p$ whose residue field $k$ is perfect. Then $(F,v)$ is \NIP{} if and only if $k$ is \NIP{} and infinite and $\Gamma$ is $p$-divisible.\end{fact}

With its natural valuation $v$ mapping a series to the minimum of it support, the valued field $(k,v)=\left(\mathbf F_p^{alg}((x^\Gamma)),v\right)$ is \emph{maximal}, \emph{i.e.} has no proper valued field extension having both same residue field and same valuation group (see \cite{Krull1932} or \cite[Exercise 3.5.6]{EP2005}). Its residue field $\mathbf F_p^{alg}$ is algebraically closed, hence \NIP{}. Its valuation group $\Gamma$ is $p$-divisible, so the pure field $k$ is \NIP{} by Fact~\ref{Bel}, and $\alpha=x$ does not have a square root in $k$. Note that $k$ is perfect since a series $\displaystyle\sum a_\gamma x^\gamma$ has a $p$th-root $\displaystyle\sum a_\gamma^{1/p} x^{\gamma/p}$. Let us consider the field $F=k((t^\Gamma))$. Again, by Fact~\ref{Bel}, the pure field $F$ is \NIP{}. The extension $F(\sqrt t)/F$ has a cyclic Galois group generated by the automorphism $\sigma$ switching $\sqrt t $ and $-\sqrt t$. The cyclic algebra $D=(F(\sqrt t)/F,\sigma,\alpha)$ is an $F$-algebra of centre $F$ and dimension 4, definable in~$F$ (as $\sigma$ is definable in $F(\sqrt t)$), so the ring $D$ does not have the independence property. Since the norm map $\N_{F(\sqrt t)/F}$ is defined by \[\N_{F(\sqrt t)/F}(a+b\sqrt t)=(a+b\sqrt t)(a-b\sqrt t)=a^2-b^2t,\] we claim that $\alpha$ does not belong to $\N_{F(\sqrt t)/F}(F(\sqrt t)^\times)$. Assume for a contradiction that $a^2-b^2t=\alpha$ holds for some $(a,b)$ in $F$. Let $a_\gamma t^\gamma$ and $b_\delta t^\delta$ be the monomials of smallest valuation appearing in $a$ and $b$ respectively (where $a_\gamma$ and $b_\delta$ are elements of~$k$, possibly zero if $a$ or $b$ are zero). The monomials of smallest valuation appearing in $a^2$ and $b^2t$ are $a_\gamma^{2}t^{2\gamma}$ and $b_\delta^{2}t^{2\delta+1}$ respectively. Since $2\Gamma$ and $2\Gamma+1$ are disjoint, one has either $a_\gamma^{2}t^{2\gamma}=\alpha$, or $-b_\delta^{2}t^{2\delta+1}=\alpha$. The first case leads to $a_\gamma^2=\alpha$, a contradiction since $\alpha$ was chosen with no square root in $k$, and the second case to $2\delta+1=0$, a contradiction as well. We conclude by Fact~\ref{LamD} that $D$ is an \NIP{} division ring.

Note that what is needed for the present purpose is:\begin{itemize}
\item that $F$ be \NIP{}, so that  $D=(K/F,\sigma,\alpha)$ be \NIP{} as well,
\item that $\alpha$ belong to $F^\times\setminus\N_{K/F}(K^\times)$, so that $D$ be a division ring.\end{itemize}
If $p=2$, we let $\Gamma=\langle 1/2^i\colon i\in\mathbf N\rangle$ and chose similarly a perfect \NIP{} field $k$ of characteristic $2$ having an element $\alpha\in k$ with no third-root in $k$, and having a primitive third-root $\omega$ of $1$. For instance, we may take $k=\mathbf F_2^{alg}((x^\Gamma))$ and $\alpha=x$. We then consider the \NIP{} field $F=k((t^\Gamma))$ and do a similar construction as above with the cyclic $F$-algebra $D=(F(\sqrt[3] t)/F,\sigma,\alpha)$ of dimension $9$ where $\sigma\in{\rm Gal}(F(\sqrt[3]t/F))$ is the automorphism mapping $a+b\sqrt[3]t+c\sqrt[3]t^2$ to $a+\omega b\sqrt[3] t+\omega^2c\sqrt[3] t^2$. Using the identity $1+\omega+\omega^2=0$, one shows that the norm map $\N_{F(\sqrt[3] t)/F}$ is defined by \begin{align*}\N_{F(\sqrt[3] t)/F}(a+b\sqrt[3] t+c\sqrt[3] t^2)&= (a+b\sqrt[3] t+c\sqrt[3] t^2)(a+\omega b\sqrt[3] t+\omega^2c\sqrt[3] t^2)(a+\omega^2b\sqrt[3] t+\omega c\sqrt[3] t^2)\\
&= a^3+b^3t+c^3t^2+3abc(\omega^2t+\omega t) \\
&= a^3+b^3t+c^3t^2+abct.\end{align*}
We claim that $\alpha$ does not belong to $\N_{F(\sqrt[3]t)/F}(F(\sqrt[3]t)^\times)$. Assume for a contradiction that $a^3+b^3t+c^3t^2+abct=\alpha$ holds for some $(a,b,c)$ in $F$, and let $a_\gamma t^\gamma$, $b_\delta t^\delta$ and $c_\varepsilon t^\varepsilon$ be the monomials of smallest valuation appearing in $a$, $b$ and $c$ respectively (where $a_\gamma,b_\delta$ and $c_\varepsilon$ are elements of~$k$, possibly zero). The monomials of smallest valuation appearing in $a^3$, $b^3t$, $c^3t^2$ and $abct$ are $a_\gamma^3t^{3\gamma}$, $b_\delta^3t^{3\delta+1}$, $c_\varepsilon^3t^{3\varepsilon+2}$ and $a_\gamma b_\delta c_\varepsilon t^{\gamma+\delta+\varepsilon+1}$ respectively. Since $3\Gamma$, $3\Gamma+1$ and $3\Gamma +2$ are pairwise disjoint, and since $\gamma+\delta+\varepsilon+1$ is the arithmetic mean of $3\gamma$, $3\delta+1$ and $3\varepsilon +2$, one has $$\min\{3\gamma,3\delta+1,3\varepsilon +2\}<\gamma+\delta+\varepsilon+1.$$ It follows that either $a_\gamma^3t^{3\gamma}$, or $b_\delta^3t^{3\delta+1}$ or $c_\varepsilon^3t^{3\varepsilon+2}$ equals $\alpha$, but either case leads to a contradiction.\end{proof}
The pure division rings constructed above are not stable since their centres are Henselian (see \cite[Corollary 18.4.2]{Erfrat2006}) and have a nontrivial definable valuation (see for example \cite[Theorem 5.2]{JK2015a} or \cite[Theorem 3.10]{JK2015b}).

\section{Preliminaries on NIP division rings of prime characteristic}

\subsection{NIP Fields}It is believed that an \NIP{} field is either finite, separably closed, real closed or admits a nontrivial henselian valuation (this conjecture is attributed in \cite{HHJ2019} to S. Shelah). A characterisation of the subclass of dp-minimal fields is given in \cite{Johnson2015}, which also confirms Shelah's conjecture for the particular case of dp-minimal fields. According to \cite{Johnson2015}, the main Theorem ``almost says that all infinite dp-minimal fields are elementary equivalent to ones of the form $k((t^\Gamma))$ where $k$ is $\mathbf F_p^{alg}$ or a characteristic zero local field, and $\Gamma$ satisfies some divisibility conditions. The one exceptional case is the mixed characteristic case, which includes fields such as the spherical completion of $\mathbf Z_p^{un}(p^{1/p^\infty})$.'' In addition to the Baldwin-Saxl chain condition~\ref{BS} for intersections of uniformly definable subgroups, we shall only use the following result from \cite[Theorem 4.3]{KSW2011}. Let us recall that if $F$ is a field of characteristic~$p$, a proper field extension $K/F$ is called \emph{Artin-Schreier} if $K=F(a)$ where $a$ is a root of $x^p-x+b$ for some $b\in F$.

\begin{fact}[Kaplan and Scanlon]\label{KSW}An infinite \NIP{} field has no Artin-Schreier extension.\end{fact}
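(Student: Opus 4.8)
The plan is to reduce the statement to a surjectivity property of the Artin-Schreier map and then play it against the chain condition of Fact~\ref{BS}. There is nothing to prove in characteristic $0$, so let $K$ be an infinite \NIP{} field of characteristic $p>0$. By Artin-Schreier theory, $K$ has a proper Artin-Schreier extension if and only if the additive endomorphism $\wp\colon x\mapsto x^p-x$ of $(K,+)$ is not onto; as $\ker\wp$ is the prime field $\mathbf F_p$, this is the paradigm of the heuristic ``an additive morphism with finite kernel has large image''. So I would assume, for a contradiction, that $\wp(K)\neq K$ and fix $\alpha\in K\setminus\wp(K)$; since $\alpha\notin\wp(K')$ in every elementary extension $K'$ of $K$, one may freely pass to a saturated model.

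First I would discard the case where the subgroup $\wp(K)$ has finite index. Indeed an infinite \NIP{} field has no proper definable additive subgroup of finite index: if $H\le (K,+)$ were definable with $1<[K:H]<\infty$, then by Fact~\ref{BS} applied to the uniformly definable family $\{cH\colon c\in K^\times\}$ every finite intersection of its members is an intersection of at most $n$ of them, for a fixed $n$, and so has index at most $[K:H]^{n}$; as these finite intersections form a downward directed family of subgroups of uniformly bounded index, their total intersection $N=\bigcap_{c\in K^\times}cH$ still has finite index in $K$. But $N$ is stable under all multiplications, hence an ideal of $K$, so $N=0$ or $N=K$: the first is impossible since $K$ is infinite, and the second forces $H=K$. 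As $\wp(K)=\{y^p-y\colon y\in K\}$ is definable, we are left with the case $[K:\wp(K)]=\infty$.

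In that case the plan is to produce, for every $m$, a configuration that Fact~\ref{BS} forbids: $m+1$ subgroups $G_0,\dots,G_m$ drawn from a single uniformly definable family such that $\bigcap_{i\le m}G_i\subsetneq\bigcap_{i\in S}G_i$ for each $S\subsetneq\{0,\dots,m\}$ of size $m$, so that no fixed bound can control intersections from that family. The $G_i$ should be twisted Artin-Schreier subgroups --- the scalar multiples $b\wp(K)$ for $b\in K^\times$, or, more flexibly, the images of the additive maps $x\mapsto x^p+(b-1)x$ together with their scalar multiples --- and the parameters $b_i$ should be chosen generically in a saturated model (say along an infinite indiscernible sequence), the assumption $\alpha\notin\wp(K)$ being exactly what keeps the successive intersections from collapsing.

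The hard part is this last step: singling out a uniformly definable family and a configuration for which $\wp(K)\neq K$ provably forces the intersections not to stabilise. It genuinely requires \NIP{} beyond ``$\wp(K)$ is a proper definable subgroup'': for $K=\mathbf F_p((t))$, which is not \NIP{} by Fact~\ref{Bel} since its residue field is finite, one computes $\wp(K)=t\mathbf F_p[[t]]$, so the subgroups $b\wp(K)$ form a chain and $\{b\wp(K)\}$ satisfies the conclusion of Fact~\ref{BS} verbatim, carrying no information; one must therefore combine the chain condition with the extraction of generic parameters and, implicitly, with the absence of finite-index subgroups established above. This is the substance of Kaplan and Scanlon's proof of \cite[Theorem 4.3]{KSW2011}, and it is exactly the pattern that Theorem~\ref{gegen} will imitate over a division ring, the Zariski dimension theory of \cite{Milliet2018} standing in for the missing global rank.
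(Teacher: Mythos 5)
This statement is quoted in the paper as an external result (\cite[Theorem 4.3]{KSW2011}); the paper gives no proof of it, so the only fair comparison is with Kaplan and Scanlon's argument, which your proposal explicitly stops short of reproducing. Your preliminary reductions are fine: the equivalence between Artin--Schreier closedness and surjectivity of $\wp(x)=x^p-x$, and the observation that an infinite \NIP{} field has no proper definable additive subgroup of finite index (the bound from Fact~\ref{BS} on finite subintersections of the translates $cH$, plus the ideal argument, is correct). But this only disposes of the case $[K:\wp(K)]<\infty$; the entire content of the theorem is the case $[K:\wp(K)]=\infty$, which you announce as ``the hard part'' and then do not carry out. A proof cannot end by naming the remaining difficulty and citing the theorem being proved.

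Concretely, what is missing is the following chain, which is exactly what Theorem~\ref{gegen} transposes to division rings: (i) for $b_1^{-1},\dots,b_n^{-1}$ linearly independent over $\mathbf F_p$, the algebraic group $G_{\bar b}=\{\bar x\colon b_1\wp(x_1)=\dots=b_n\wp(x_n)\}$ is connected of dimension $1$ (the analogue of Fact~\ref{6.7}), hence isomorphic as an algebraic group to $(K,+)$ --- a step that uses perfection of $K$, so one must first reduce to a perfect situation, a point your outline does not address; (ii) Fact~\ref{BS} is used in the opposite direction from the one you describe: it does not ``forbid a configuration'' that you construct, it directly yields an $n$ with $\bigcap_{j\leqslant n+1}b_j\wp(K)=\bigcap_{j\leqslant n}b_j\wp(K)$, that is, a surjective coordinate projection $G_{(b_1,\dots,b_{n+1})}\rightarrow G_{(b_1,\dots,b_n)}$ with kernel of size $p$; (iii) transporting this projection through the isomorphisms with $(K,+)$ gives a surjective additive polynomial $\rho^*$ with $\ker\rho^*=\mathbf F_p=\ker\wp$, and the Euclidean factorisation of additive polynomials (the analogue of Facts~\ref{3.2} and~\ref{6.3}) writes $\rho^*=\delta\circ\wp$ with $\delta$ injective, hence bijective, forcing $\wp$ to be onto. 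Your proposal contains none of (i)--(iii); the genuinely NIP-sensitive input is not a choice of generic parameters along an indiscernible sequence but the identification of $G_{\bar b}$ with $(K,+)$ together with the additive-polynomial factorisation, and without these the argument does not close.
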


The proof of Fact~\ref{KSW} strongly relies on the fact that a connected algebraic subgroup of $(K,+)^n$ of Zariski dimension $1$ is isomorphic to $(K,+)$ when $K$ is a perfect field. As an immediate Corollary of Fact~\ref{KSW}, using the result of Duret \cite[Théorème 6.4]{Duret1979} on weakly algebraically closed non separably closed fields (see \cite[Corollary 4.5]{KSW2011}),

\begin{fact}[Kaplan and Scanlon]\label{KSW2}An infinite \NIP{} field of characteristic $p$ contains ${\mathbf F}_p^{alg}$.\end{fact}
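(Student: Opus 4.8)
The plan is to derive this as a corollary of Fact~\ref{KSW} (no Artin-Schreier extensions) together with Duret's theorem, exactly as the excerpt indicates. First I would recall that $\mathbf F_p^{alg}$ is obtained from $\mathbf F_p$ by iterating two kinds of extensions: separable ones and the (purely inseparable) Frobenius closure; but since we are inside a field of characteristic $p$, every element already has a $p$-th root available in the algebraic closure, and what we really need to control is the \emph{separable} algebraic closure of $\mathbf F_p$ inside $F$. So the key point is to show that $F$ contains the separable closure $\mathbf F_p^{sep}=\mathbf F_p^{alg}$ (these coincide since finite fields are perfect).

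Next I would argue that if $F$ did not contain $\mathbf F_p^{alg}$, then $F$ would not contain all finite fields $\mathbf F_{p^n}$, so the prime field has a proper separable algebraic extension missing from $F$. I want to produce from this a proper Artin-Schreier extension of $F$ (or of an infinite \NIP{} field elementarily related to $F$), contradicting Fact~\ref{KSW}. Here is where Duret's result enters: a field that is \emph{weakly algebraically closed} (every absolutely irreducible variety has a rational point, equivalently pseudo-algebraically-closed-like behaviour for curves) but not separably closed must contain $\mathbf F_p^{alg}$ in characteristic $p$; contrapositively, a characteristic-$p$ field not containing $\mathbf F_p^{alg}$ and not separably closed fails to be weakly algebraically closed. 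The route \cite[Corollary 4.5]{KSW2011} takes is: an infinite \NIP{} field with no Artin-Schreier extension is, by a theorem on such fields, weakly algebraically closed (Artin-Schreier-closed perfect fields of characteristic $p$ that are infinite and \NIP{} turn out to be weakly algebraically closed via the classification of the relevant plane curves); then Duret forces either separable closedness or containment of $\mathbf F_p^{alg}$, and in the latter case we are done, while in the former $\mathbf F_p^{alg}\subset F$ trivially.

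Concretely the steps are: (1) by Fact~\ref{KSW}, $F$ has no Artin-Schreier extension, hence $F$ is Artin-Schreier closed; since $F$ is infinite and \NIP{}, invoke the structural input that an infinite Artin-Schreier closed \NIP{} field of characteristic $p$ is weakly algebraically closed (this is the substance of \cite[Corollary 4.5]{KSW2011}); (2) apply Duret's \cite[Théorème 6.4]{Duret1979}: a weakly algebraically closed field of characteristic $p$ that is not separably closed contains $\mathbf F_p^{alg}$; (3) in either alternative, $\mathbf F_p^{alg}\subseteq F$, since a separably closed field of characteristic $p$ contains $\mathbf F_p^{sep}=\mathbf F_p^{alg}$. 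The main obstacle, and the only nonroutine ingredient, is step (1): seeing that Artin-Schreier closed plus \NIP{} plus infinite yields weakly algebraically closed — this is precisely why the citation to Duret is routed through \cite[Corollary 4.5]{KSW2011} rather than applied to $F$ directly, and I would simply quote that corollary rather than reprove it here.
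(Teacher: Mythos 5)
Your reduction to Fact~\ref{KSW} plus Duret is the right citation trail, but both of the intermediate statements you interpolate are false, so the argument does not go through. First, an infinite Artin-Schreier closed \NIP{} field of characteristic $p$ need \emph{not} be weakly algebraically closed: the field $F=\mathbf F_p^{alg}((t^\Gamma))$ used in Section 2 of this very paper is infinite and \NIP{} (hence Artin-Schreier closed by Fact~\ref{KSW}), but it carries a nontrivial Henselian valuation and is not separably closed, and by Frey--Prestel a PAC field that is not separably closed admits no nontrivial Henselian valuation. So your step (1) is not ``the substance of \cite[Corollary 4.5]{KSW2011}''; it is simply wrong. Second, the statement you attribute to Duret --- that a weakly algebraically closed field of characteristic $p$ which is not separably closed contains $\mathbf F_p^{alg}$ --- is also false: an ultraproduct of the fields $\mathbf F_{p^{q_i}}$ over pairwise distinct primes $q_i$ is pseudo-finite, hence PAC, perfect and not separably closed, yet its field of absolute numbers is just $\mathbf F_p$. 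Duret's theorem concludes that such fields have the \emph{independence property}; it does not assert containment of $\mathbf F_p^{alg}$.

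The actual argument behind \cite[Corollary 4.5]{KSW2011} analyses the subfield of absolute numbers $k=K\cap\mathbf F_p^{alg}$ rather than $K$ itself. If $k$ were finite, say $k=\mathbf F_{p^n}$, choose $b\in\mathbf F_{p^n}$ outside the image of the Artin-Schreier map of $\mathbf F_{p^n}$; any root of $x^p-x-b$ lies in $\mathbf F_p^{alg}\setminus k$, hence outside $K$, so $x^p-x-b$ has no root in $K$, contradicting the Artin-Schreier closedness of $K$ given by Fact~\ref{KSW}. Hence $k$ is infinite, and only here does Duret enter: an infinite algebraic extension of $\mathbf F_p$ is PAC, and \cite[Th\'eor\`eme 6.4]{Duret1979} yields that a field of characteristic $p$ whose subfield of absolute numbers is infinite but not algebraically closed has the independence property. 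Since $K$ is \NIP{}, this forces $k=\mathbf F_p^{alg}$. Your proposal never engages with this dichotomy on $K\cap\mathbf F_p^{alg}$, which is where the real work lies.
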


\subsection{Metro equation in NIP division rings of prime characteristic}Let us first remark that in a division ring having finite dimension over its centre and characteristic $0$, the equation \[xy-yx=1\] has no solution. For putting $\gamma_x(y)=xy-yx$, a simple induction shows that $\gamma_x(y)=1$ implies $\gamma_x(y^n)=ny^{n-1}$ for every $n\in\mathbf N$, forcing the chain $\ker\gamma_x\subset\cdots\subset\ker\gamma_x^n$ of vector-spaces to be properly ascending and contradicting the finiteness of the dimension. The same conclusion fails in characteristic~$p$, and P. Cohn provides the following general condition in \cite[p. 68]{Cohn1973} (also reported by Lam \cite[p. 239]{Lam2003}) for an arbitrary division ring $\R$.

\begin{fact}[Cohn]\label{LamB}Let $a\in \R$ be algebraic over $\Z(\R)$. Then $ax-xa=1$ has a solution if and only if $a$ is not separable over $\Z(\R)$.\end{fact}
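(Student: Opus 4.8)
The plan is to prove Fact~\ref{LamB} (Cohn's criterion) directly, by analysing how the derivation $\gamma_a\colon x\mapsto ax-xa$ acts on powers of $a$, in the spirit of the characteristic-$0$ remark preceding the statement. Write $E=\Z(\R)$ and let $a$ be algebraic over $E$, with $f\in E[t]$ its minimal polynomial. Note that $\gamma_a$ is an $E$-linear derivation of $\R$ commuting with left and right multiplication by $a$ (since $a$ is central over nothing, but $\gamma_a(a)=0$, so $\gamma_a$ is $E[a]$-linear on the left and on the right in the appropriate sense); in particular $\gamma_a(a^n)=0$ for all $n$, and more generally $\gamma_a(p(a))=0$ for every $p\in E[t]$.

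For the easy direction, suppose $a$ is separable over $E$, so $f$ and $f'$ are coprime in $E[t]$; pick $u,v\in E[t]$ with $uf+vf'=1$. Assume $\gamma_a(x)=1$ for some $x\in\R$. The key computation is that for any $p\in E[t]$ one has $\gamma_a(p(a)\,x) = p(a) + \gamma_a(p(a))x$, but $\gamma_a$ does not literally differentiate $p$; instead the right trick is to use the identity $\gamma_a(x\,p(a)) = \gamma_a(x)p(a) + x\gamma_a(p(a)) = p(a)$ together with a comparison that produces $f'(a)$. Concretely, one shows by induction on $\deg p$ that $\gamma_a(x p(a)) - \gamma_a(p(a) x)$ has a closed form whose leading term recovers $p'(a)$; applying this to $p=f$ gives $0 = f'(a)\cdot(\text{unit})$ up to lower-order corrections, forcing $f'(a)$ to be a zero-divisor in the field $E(a)$, hence $f'(a)=0$, contradicting separability. (The cleanest packaging is: the map $E(a)\to\R$, $p(a)\mapsto \gamma_a(p(a)x) = p(a)$ shows $x$ acts like an ``integration'', and one derives that $f'(a)=0$ from $f(a)=0$.) This direction is routine linear algebra over the field $E(a)\subseteq\R$ once the bookkeeping is set up.

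The substantive direction is the converse: if $a$ is \emph{not} separable over $E$, construct a solution of $ax-xa=1$. Here $\char\R=p>0$ necessarily, and $f(t)=g(t^{p^e})$ for some separable $g$ and $e\ge 1$, so $f'(a)=0$. The idea is to work inside the division subring $\R$ viewed as a left vector space over $E(a)$ — or better, to use that $\R$ contains, by a theorem on the structure of $C_\R(E(a))$ or by a direct Wedderburn/Skolem–Noether style argument, enough room to realise the ``dual'' operator to $\gamma_a$. One standard route (Amitsur): since $a$ is inseparable, $E(a)/E$ has a nontrivial purely inseparable part, and one produces $x$ explicitly as follows — the derivation $\gamma_a$ restricted to the centraliser $C=C_\R(E[a])$ is trivial, and on a complement one arranges $\gamma_a$ to hit $1$. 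I would make this precise by passing to $\R\otimes_E \bar E$ or by invoking that $a$ inseparable over $E$ means $a$ satisfies $a^{p^e}\in E$ for a suitable reduction, and then setting $x$ to be an element conjugating $a$ appropriately; the cleanest concrete construction, due to Cohn, takes $x$ so that $xax^{-1}=a+1$ is impossible in general, so instead one uses the additive commutator directly and solves $\gamma_a(x)=1$ in the finite-dimensional $E$-algebra $E[a]+E[a]x+\cdots$ by a dimension count that succeeds precisely because $f'(a)=0$ kills the obstruction that blocked the characteristic-$0$ case.

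The main obstacle is exactly this converse construction: showing that inseparability is not merely necessary but \emph{sufficient} for solvability requires either (i) Amitsur's theorem that $ax-xa=1$ is solvable in some extension and then a descent argument to $\R$ itself using that $a$ is algebraic over the centre, or (ii) an explicit element. I expect to quote Cohn's argument from \cite[p.~68]{Cohn1973} for the existence: the crux is that when $f'(a)=0$ the $E$-linear operator $\gamma_a$ on the (possibly infinite-dimensional) space $\R$ has $1$ in its image because the ascending chain $\ker\gamma_a\subseteq\ker\gamma_a^2\subseteq\cdots$ stabilises (all powers of $a$, and the whole field $E(a)$, lie in $\ker\gamma_a$, and inseparability forces $\gamma_a^p$ to vanish on a large enough subspace), whereas in the separable/char-$0$ situation that chain is strictly increasing and forces $1\notin\ima\gamma_a$. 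Assembling these two halves gives the equivalence; the separable direction is the one I would write out in full, and for the inseparable direction I would give the construction modulo the cited reference.
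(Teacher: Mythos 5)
First, a point of reference: the paper does not prove Fact~\ref{LamB} at all --- it is imported from Cohn \cite[p.~68]{Cohn1973} (as reported by Lam), so there is no in-paper argument to compare yours against, and deferring to the literature is what the author does. Judging your proposal on its own terms, the direction you call easy (``separable $\Rightarrow$ no solution'') rests on a computation that does not work. Since $\gamma_a$ is a derivation vanishing on $\Z(\R)[a]$, one has $\gamma_a(x\,p(a))=\gamma_a(x)p(a)+x\gamma_a(p(a))=p(a)$ and likewise $\gamma_a(p(a)\,x)=p(a)\gamma_a(x)=p(a)$, so the difference you propose to analyse is identically zero and recovers nothing about $p'$. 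The identity you actually need is not $\gamma_a$ applied to products but the inner derivation \emph{by} $p(a)$: the induction $a^nx-xa^n=na^{n-1}$ (the very computation the paper performs in characteristic $0$ just before the statement, with the roles of the two variables exchanged) gives $f(a)x-xf(a)=f'(a)$ for any $f$ with coefficients in the centre; applied to the minimal polynomial this yields $f'(a)=0$, hence $f'=0$ by minimality of $f$, hence inseparability. So this half is repairable in two lines, but as written it is incorrect.

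The genuine gap is the converse, ``inseparable $\Rightarrow$ solvable'', which you leave entirely to the reference --- and which is precisely the direction the paper uses (the corollary $\C(a^p)=\C(a)$ and Remark~\ref{rk} both invoke ``no solution $\Rightarrow$ separable''). The heuristic you offer in its place is a non sequitur: stabilisation of the chain $\ker\gamma_a\subseteq\ker\gamma_a^2\subseteq\cdots$ says nothing about whether $1\in\ima\,\gamma_a$ (an additive map can have a stationary kernel chain and still miss $1$). The idea your sketch never reaches is the following. Writing $f(t)=g(t^{p^e})$ with $g$ separable and $e\geqslant1$, the element $b=a^{p^e}$ is separable over $\Z(\R)$ and $\Z(\C(b))=\Z(\R)(b)$ by the double centraliser theorem, so $a\notin\Z(\C(b))$ (otherwise $a$ would lie in the separable extension $\Z(\R)(b)$). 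On the division subring $\C(b)$ one has $\gamma_a^{p^e}=\gamma_{a^{p^e}}=\gamma_b=0$ (left and right multiplication by $a$ commute, so the $p$-th power of their difference is the difference of their $p$-th powers), while $\gamma_a\neq0$ there; thus $\gamma_a$ restricts to a nonzero nilpotent derivation of $\C(b)$. Taking $k$ maximal with $\gamma_a^k\neq0$ and $y$ with $c=\gamma_a^k(y)\neq0$, the element $c$ is an invertible element of $\ker\gamma_a$, hence so is $c^{-1}$, and $x=\gamma_a^{k-1}(y)\,c^{-1}$ satisfies $\gamma_a(x)=cc^{-1}=1$. Without this construction (or an equivalent one) the equivalence is not established, and no dimension count of the kind you gesture at will substitute for it.
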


The equation $ax-xa=1$ arose in a conversation between P. Cohn and S. Amitsur on the Paris Metro on the 28th of June 1972 according to \cite[p. 418]{Cohn1995}, and is referred to as the \emph{metro-equation} in \cite{Cohn1973}. Our first goal is to show that the metro equation has no solution in an \NIP{} division ring of characteristic $p$. For that purpose, we recall Herstein's Lemma.

\begin{fact}[Herstein \protect{\cite[Lemma 3.1.1]{Her1996}}]\label{H}Let $a\in \R^\times\setminus \Z(\R)$ have finite multiplicative order. There is $b\in \R^\times$ and a natural number $n\in\mathbf N$ such that \[b^{-1}ab=a^n\neq a.\]\end{fact}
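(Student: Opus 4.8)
The plan is to reduce the statement to the Galois theory of finite fields by studying the commutator map $\delta\colon\R\to\R$, $\delta(x)=ax-xa$. Since $a\in\R^\times$ has finite multiplicative order, it is a root of some $X^t-1$, hence algebraic over the prime field $\mathbf F_p$ of $\R$; thus $K:=\mathbf F_p(a)$ is a finite field, say $K=\mathbf F_{p^k}$, with $a\in K^\times$ and $a^{p^k}=a$. Writing $L_a$ and $R_a$ for left and right multiplication by $a$ on $\R$, one has $\delta=L_a-R_a$, and since every element of $K$ commutes with $a$ the map $\delta$ is right $K$-linear for the right $K$-vector space structure on $\R$.

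The key step is the identity $\delta^{p^k}=\delta$: as $L_a$ and $R_a$ commute, in characteristic $p$ one has $\delta^{p^k}=(L_a-R_a)^{p^k}=L_a^{p^k}-R_a^{p^k}=L_{a^{p^k}}-R_{a^{p^k}}=L_a-R_a=\delta$. Hence $\delta$ is annihilated by the separable polynomial $X^{p^k}-X=\prod_{c\in K}(X-c)$, which splits over $K$, so $\delta$ is diagonalisable and $\R=\bigoplus_{c\in K}\R_c$ with $\R_c=\{x\in\R\colon ax-xa=xc\}$. Since $a\notin\Z(\R)$ we have $\delta\neq 0$, so $\R\neq\R_0$ and there is $c\in K\setminus\{0\}$ with $\R_c\neq 0$.

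Pick a nonzero $b\in\R_c$. From $ab-ba=bc$ one gets $ab=b(a+c)$, hence $b^{-1}ab=a+c$, an element of the field $K$ distinct from $a$. The inner automorphism $x\mapsto b^{-1}xb$ of $\R$ fixes $\mathbf F_p$ pointwise, so $a$ and $a+c$ have the same minimal polynomial over $\mathbf F_p$; as both lie in $K=\mathbf F_{p^k}$ and the Galois group of $K/\mathbf F_p$ is generated by the Frobenius, $a+c=a^{p^j}$ for some $1\le j<k$. Setting $n=p^j$ then gives $b^{-1}ab=a^n\neq a$, as wanted.

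The only inventive point is noticing the identity $\delta^{p^k}=\delta$ and deducing that $\delta$ is diagonalisable over $K$; everything afterwards --- converting an eigenvector equation into a conjugation relation and identifying the outcome as a power of $a$ --- is bookkeeping. I would also remark that the argument is essentially characteristic-free: one may instead diagonalise the commuting finite-order operators $L_a$ and $R_a$ themselves over a field containing enough roots of unity, and a common eigenvector with distinct eigenvalues yields $b^{-1}ab=a+\zeta-\zeta'$, again a conjugate of $a$ lying among its powers; this is the form in which the lemma is due to Herstein.
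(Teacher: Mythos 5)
Your argument is correct and is in substance Herstein's own proof of the cited Lemma 3.1.1: the identity $\delta^{p^k}=\delta$ for $\delta=L_a-R_a$, the resulting eigenspace decomposition of $\R$ as a right $K$-vector space over $K=\mathbf F_p(a)$, and the identification of $a+c$ with a Frobenius conjugate $a^{p^j}$ of $a$ are exactly the steps of the source that the paper cites without reproving. Note only that this establishes the characteristic~$p$ case (which is what Herstein's lemma states and what the paper uses); your closing remark about characteristic $0$ is merely a sketch, and there the cleaner route is to diagonalise $L_a$ itself on $\R$ viewed as a right $\mathbf Q(a)$-vector space, so that an eigenvector $b$ with $ab=ba^i$ gives $b^{-1}ab=a^i$ directly, rather than passing through $a+\zeta-\zeta'$.
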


It is pointed out in \cite[Exercise 16.17]{Lam2003} that Fact~\ref{H} holds in every characteristic. In characte\-ristic $p$, the element $b$ in Fact~\ref{H} has infinite order, for otherwise $a$ and $b$ would generate a finite (noncommutative) integral domain, contradicting Wedderburn's Little Theorem. It follows that in an infinite division ring, any element $a$ has an infinite centraliser, which we write $\C(a)$. For if $a$ has infinite order, then $\C(a)$ contains the infinite cyclic group $\left\langle a\right\rangle$, whereas if $a$ has finite order~$q$, Herstein's Lemma yields a $b$ with $b^{-1}ab=a^n$ where $a$ and $a^n$ have same order~$q$, so that $n$ and $q$ are coprime. Writing $\varphi$ for Euler's totient function, Euler's Theorem provides that $\C(a)$ contains the infinite $\left\langle b^{\varphi(q)}\right\rangle$ (see also \cite[Theorem 13.10]{Lam1991}).  One may use instead Brauer's \cite[Corollary 3.3.9]{Cohn1995} which implies that any algebraic element over $\Z(\R)$ has a ``large'' centraliser.

\begin{fact}[Brauer \cite{Brauer1932}]\label{F:B}For any $a\in\R$, one has $[\R:\C(a)]_{\rm left}=[\Z(\R)(a):\Z(\R)]$.\end{fact}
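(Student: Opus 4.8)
The plan is to treat the case where $a$ is algebraic over the centre $Z:=\Z(\R)$; when $a$ is transcendental both sides of the identity are infinite. Write $k:=\Z(\R)(a)=Z[a]$, a commutative subfield of $\R$ with $n:=[k:Z]<\infty$, and $C:=\C(a)$, a sub-division-ring of $\R$ equal to $\C_\R(k)$ and containing $k$. If $\R$ is finite-dimensional over $Z$, the statement is immediate from the double centralizer theorem applied to the commutative (hence simple) subalgebra $k$: it gives $[\R:Z]=[k:Z]\cdot[C:Z]$, and dividing by $[C:Z]$ via the tower law for left dimensions yields $[\R:C]_{\mathrm{left}}=n$. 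So the real content is the general, possibly infinite-dimensional, case.

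For that, I would use the commuting pair $\lambda_a,\rho_a$ of left and right multiplication by $a$ on $\R$: both are annihilated by the minimal polynomial of $a$ over $Z$, so $\R$ becomes a module over the finite commutative algebra $k\otimes_Z k$, and $\C(a)=\ker(\lambda_a-\rho_a)$ is the submodule killed by the diagonal ideal $\ker(k\otimes_Z k\to k)$. Decomposing $k\otimes_Z k$ into its local Artinian factors --- which amounts to separating the maximal separable subextension $k_s$ of $k/Z$ from the purely inseparable part $k/k_s$ --- one gets a matching decomposition of $\R$. The separable directions are controlled by a Skolem--Noether-type statement (two $Z$-embeddings of a finite extension into $\R$ are conjugate by a unit of $\R$), which lets one pin down the $C$-dimension of each separable summand. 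The purely inseparable direction contributes a local factor $k[u]/(u^{p^e})$ with $u$ acting as the inner derivation $\delta_a=\lambda_a-\rho_a$, over which one has to prove $\R$ is not merely faithful but \emph{free}; that is what makes the $C$-dimension of that summand exactly $p^e=[k:k_s]$, and adding up the summand dimensions gives $n$.

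The freeness over $k[u]/(u^{p^e})$ is the crux, and it is exactly where Fact~\ref{LamB} is used. Since $a$ is not separable over $Z$, the metro equation $\delta_a(x_0)=1$ has a solution $x_0$, and the characteristic-$p$ identity $\delta_a^m(x_0^m)=m!$ (a one-line Leibniz induction) shows $\delta_a^{p-1}\neq0$ in the degree-$p$ case; since moreover $c=\delta_a^{p-1}(cw)$ for every $c\in C$ once $\delta_a^{p-1}(w)$ is a unit, $\ker\delta_a$ equals the image of $\delta_a^{p-1}$, which is precisely freeness of $\R$ over $k[u]/(u^p)$; the general purely inseparable case follows by d\'evissage along degree-$p$ steps (showing $\Z(\C_\R(K))=K$ at each stage, so that the degree-$p$ case applies). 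Finally, because $\R$ is not assumed finite-dimensional over $Z$, the classical forms of the double centralizer and Skolem--Noether theorems are unavailable; one replaces them by versions valid for a finite extension $k/Z$ inside an arbitrary division ring with centre $Z$, which follow from the standard ``shortest relation in a two-sided ideal'' argument showing $\R\otimes_Z k$ is a simple ring with centre $k$. I expect the main obstacle to be exactly this: carrying out the separable and inseparable bookkeeping simultaneously in the non-Noetherian setting, the metro equation of Fact~\ref{LamB} being the one genuinely characteristic-$p$ ingredient.
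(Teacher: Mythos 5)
Note first that the paper does not prove this statement: Fact~\ref{F:B} is quoted from Brauer and from Cohn's \emph{Skew Field Constructions/Skew Fields} (Corollary 3.3.9), so the comparison is with the classical argument rather than with a proof in the text. Measured against that, your plan has one gap that is fatal as it stands: the d\'evissage in the purely inseparable part. To iterate the degree-$p$ step inside $\R_1=\C_\R(K_1)$ you must know that $\Z(\R_1)=K_1$; you flag this (``showing $\Z(\C_\R(K))=K$ at each stage'') but give no argument, and this bicommutant-type statement is precisely the other half of Brauer's theorem, of the same depth as the dimension formula you are proving. It matters concretely: Fact~\ref{LamB} applied inside $\R_1$ speaks of separability over $\Z(\R_1)$, which a priori could be strictly larger than $K_1$ and could even contain the next generator of the tower, in which case the metro equation need not be solvable there and your count of the inseparable local factor collapses. (By contrast, the degree-$p$ computation itself --- $\delta_a(x_0)=1$, $\delta_a^{p-1}(x_0^{p-1})=(p-1)!$, hence $\ker\delta_a\subseteq\mathrm{im}\,\delta_a^{p-1}$ and equality of the $\C(a)$-dimensions of the successive quotients $\ker\delta_a^{j+1}/\ker\delta_a^{j}$ --- is correct and is the nicest part of your sketch.)

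Two further gaps. The separable bookkeeping is only ``controlled by Skolem--Noether'' when $k_s/\Z(\R)$ is Galois; for a non-Galois separable part the local factors of $k\otimes_{\Z(\R)}k$ have residue fields strictly larger than $k$, and conjugacy of two embeddings of $k$ into $\R$ neither computes the $\C(a)$-dimension of those summands nor even shows they are nonzero --- there you are forced back to the $\R\otimes_{\Z(\R)}k$-module argument (or to Wedderburn's factorisation theorem plus extra work). And the transcendental case is not ``immediate'': that $[\R:\C(a)]$ is infinite when $a$ is transcendental over $\Z(\R)$ is itself part of Brauer's theorem and needs an argument (e.g.\ Jacobson density). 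In fact the classical proof does everything at once and more cheaply: for $k=\Z(\R)(a)$ algebraic, $\R\otimes_{\Z(\R)}k$ is simple (your ``shortest relation'' argument) and Artinian because its left ideals are left $\R$-subspaces of a space of $\R$-dimension $[k:\Z(\R)]$; $\R$ is a \emph{simple} module over it (a nonzero sub-bimodule contains a unit) with endomorphism ring $\C(a)$ acting by right multiplication, and comparing left $\R$-dimensions yields $[\R:\C(a)]=[k:\Z(\R)]$ together with the bicommutant property, with no separable/inseparable case split and no use of Fact~\ref{LamB}; the density theorem then settles the transcendental case. I would either cite Brauer/Cohn as the paper does, or run that uniform argument; your decomposition can be repaired, but only by importing the very statements it was meant to avoid.
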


By symmetry, Brauer's result implies that for any $a$, the division ring $\R$ has equal right and left $\C(a)$-dimension, which we may write $[\R:\C(a)]$ without ambiguity.

\begin{theoreme}\label{ZZZ}The centre of an infinite \NIP{} division ring is infinite.\end{theoreme}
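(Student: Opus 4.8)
The plan is to argue by contradiction: suppose $\Z(\R)$ is finite. Then $\Z(\R)$ is a finite field, say $\Z(\R)=\mathbf F_q$ where $q$ is a power of the characteristic $p$ of $\R$. Since a finite division ring is commutative by Wedderburn's little theorem and $\R$ is infinite, $\R$ is noncommutative; as a division ring all of whose elements have finite multiplicative order is commutative (a classical theorem going back to Jacobson), $\R$ is therefore not periodic and contains an element $t$ of infinite multiplicative order. Such a $t$ is transcendental over $\Z(\R)$ — an element algebraic over $\mathbf F_q$ generates a finite subfield and so has finite order — so the subfield $\mathbf F_q(t)\subseteq\R$ it generates over $\Z(\R)$ is a rational function field in one variable.

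Next I would examine the centraliser $\C(t)=\{x\in\R\colon xt=tx\}$. It is a sub-division ring of $\R$, definable with the single parameter $t$, hence interpretable in $\R$ and so \NIP{}; by Fact~\ref{H} and the remark following it, it is infinite. Every element of $\C(t)$ commutes with $t$, hence with all of $\mathbf F_q(t)$, so $\mathbf F_q(t)$ lies in the centre of $\C(t)$; consequently $\Z(\C(t))$ is a subfield of $\R$ definable in the interpreted ring $\C(t)$, hence an infinite \NIP{} field of characteristic $p$ containing $\mathbf F_q(t)$. By Fact~\ref{KSW} it has no Artin–Schreier extension, and by Fact~\ref{KSW2} it contains $\mathbf F_p^{alg}$.

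The step I expect to be the main obstacle is to turn this into a contradiction with Fact~\ref{KSW}. The point is that although $\mathbf F_q(t)$ itself admits a proper Artin–Schreier extension (for instance a root of $x^p-x=1/t$), this does not transfer to the overfield $\Z(\C(t))$, which, being infinite \NIP{}, actually does contain a root of $x^p-x=1/t$; so one must exhibit a \emph{definable} infinite \NIP{} subfield of $\R$ carrying a proper Artin–Schreier extension. One line of attack is to shrink the centraliser: by the chain condition Fact~\ref{BS} applied to the uniformly definable family $\{\C(a)\colon a\in\R\}$, whose intersection over all $a\in\R$ is $\Z(\R)$, there are $a_1,\dots,a_n$ with $\C(a_1)\cap\dots\cap\C(a_n)=\Z(\R)$, and — using that by Fact~\ref{F:B} the centraliser of an element algebraic over $\Z(\R)$ has finite index — one tries to find a transcendental $t$ whose centraliser $\C(t)$ already equals the field $\mathbf F_q(t)$, which would make $\mathbf F_q(t)$ a definable, hence \NIP{}, field with a proper Artin–Schreier extension, against Fact~\ref{KSW}. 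Failing such a choice, the alternative is to carry out Kaplan and Scanlon's argument for Fact~\ref{KSW} inside $\R$ itself: work with the additive Artin–Schreier-type morphism $x\mapsto x^{p}-x$ twisted by a suitable inner automorphism, note that its kernel on an appropriate commutative definable piece is finite, and use the $\sigma$-linear Zariski dimension theory of \cite{Milliet2018} together with Fact~\ref{BS} to force its image to have small codimension and reach the contradiction — this being, in effect, the division-ring analogue of \cite[Theorem 4.3]{KSW2011}.
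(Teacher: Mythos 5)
Your set-up (pass to $\C(t)$ for an element $t$ of infinite order, observe that $\Z(\C(t))$ is an infinite \NIP{} field, invoke Facts~\ref{KSW} and~\ref{KSW2}) matches the opening of the paper's argument, but your proposal stops exactly where the real work begins: you never derive a contradiction from ``$\Z(\R)$ finite'', and you say so yourself. Neither of your two fallback routes closes the gap. For the first, Fact~\ref{BS} only controls \emph{finite} intersections of uniformly definable subgroups; it does not give $a_1,\dots,a_n$ with $\C(a_1)\cap\dots\cap\C(a_n)=\Z(\R)$ — that conclusion needs the stable chain condition (Fact~\ref{SDCC}), which is precisely what is unavailable here. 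Even granting it, there is no reason whatsoever that some transcendental $t$ should satisfy $\C(t)=\mathbf F_q(t)$: centralisers of transcendental elements can be huge, and the whole difficulty is that the definable fields you can reach ($\Z(\C(t))$, etc.) are exactly the ones with no Artin--Schreier extension, so no contradiction with Fact~\ref{KSW} materialises. For the second route, ``redo Kaplan--Scanlon inside $\R$ with the $\sigma$-linear dimension theory'' is the proof of Theorem~\ref{gegen}, and in this paper that argument rests on Corollary~\ref{metro}, which is itself deduced from Theorem~\ref{ZZZ}; so as a proof of Theorem~\ref{ZZZ} it is both unexecuted and circular in the paper's architecture.

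The missing idea is much more elementary and is where the paper actually goes. From your own first paragraph one already gets, for \emph{every} $a\in\R$, that $\C(a)$ is an infinite \NIP{} division ring (via Herstein's Lemma~\ref{H} or Brauer's Fact~\ref{F:B}), hence $\Z(\C(a))$ contains a copy $F_a$ of $\mathbf F_p^{alg}$ by Fact~\ref{KSW2} — no contradiction hypothesis needed. The crux is then to show this copy is \emph{unique}, i.e.\ that any two roots $\omega_1,\omega_2$ of $x^{p^n}-x$ in $\R$ commute: by Fact~\ref{F:B} each $\C(\omega_i)$ has index at most $p^n$ in $\R$, so $\C(\omega_1)\cap\C(\omega_2)$ is again an infinite \NIP{} division ring and contains a copy $F_n$ of $\mathbf F_{p^n}$; since $F_n$ commutes with both $\omega_i$ and $x^{p^n}-x$ already has $p^n$ roots in $F_n$, both $\omega_i$ lie in $F_n$ and therefore commute. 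Hence $F_a=F_b$ for all $a,b$, this common copy of $\mathbf F_p^{alg}$ lies in every centraliser, i.e.\ in $\Z(\R)$, and the centre is infinite. Without this uniqueness/commutation step (or some substitute for it) your argument does not reach the conclusion.
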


\begin{proof}Let $\R$ be an infinite \NIP{} division ring of characteristic $p$. If all elements have finite order, by Fact~\ref{H}, the ring $\R$ is commutative, so we may assume that there is some $c\in\R^\times$ having infinite order. The field $\Z\left(\C(c)\right)$ is infinite. By Fact~\ref{KSW2}, it contains a copy of $\mathbf F_p^{alg}$. We have shown that any infinite \NIP{} division ring contains a copy of $\mathbf F_p^{alg}$. We claim that this copy is unique and lies in the centre of $\Z(\R)$. For that purpose, since any centraliser $\C(a)$ contains a copy $F_a$ of $\mathbf F_p^{alg}$, it suffices to fix a natural number $n\in\mathbf N$ and show that any two roots $(\omega_1,\omega_2)$ of $x^{p^n}-x$ commute. This will provide that $F_a=F_b$ for any $(a,b)$ in $\R$. Note that one has $[\R:\C(\omega_i)]< p^n$ by Fact~\ref{F:B}. It follows that the division ring $\C(\omega_1)\cap\C(\omega_2)$ is infinite, and \NIP{}, so contains a copy $F_n$ of $\mathbf F_{p^n}$. But one has $F_n(\omega_1)=F_n=F_n(\omega_2)$ since the polynomial $x^{p^n}-x$ has already $p^n$ roots in $F_n$, so $\omega_1$ and $\omega_2$ commute. This shows that $\Z(\R)$ contains $\mathbf F_p^{alg}$.
\end{proof}

\begin{corollaire}[metro equation]\label{metro}An \NIP{} division ring of characteristic $p$ satisfies \mbox{$xy-yx\neq1$}.\end{corollaire}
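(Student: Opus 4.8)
The plan is to deduce Corollary~\ref{metro} from the combination of Theorem~\ref{ZZZ} and Cohn's Fact~\ref{LamB}. Suppose, for contradiction, that $\R$ is an \NIP{} division ring of characteristic $p$ in which there exist $x,y$ with $xy-yx=1$. First I would note that $\R$ is infinite: in a finite division ring Wedderburn's Little Theorem gives commutativity, so $xy-yx=0\neq 1$. Hence Theorem~\ref{ZZZ} applies and $\Z(\R)$ is infinite; in particular $\Z(\R)$ contains $\mathbf F_p^{alg}$, so every element of $\Z(\R)$ has a $p$-th root in $\Z(\R)$ and $\Z(\R)$ is perfect.

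Next I would observe that the element $a=x$ occurring in the metro equation is necessarily algebraic over $\Z(\R)$. This is the step I expect to be the main obstacle, since Fact~\ref{LamB} is only stated for elements algebraic over the centre, and \emph{a priori} $\R$ need not be finite-dimensional over $\Z(\R)$ at this stage (that finiteness is proved later in the paper, not here). The way around it is to work inside the centraliser: set $\gamma_a(z)=az-za$, so that $\gamma_a$ is a $\C(a)$-linear endomorphism of $\R$ viewed as a left vector space over the field $\C(a)\cap\dots$; more precisely, an easy induction (exactly the one recalled in the o-minimal/characteristic-$0$ remark preceding Fact~\ref{LamB}) shows $\gamma_a(y)=1$ forces $\gamma_a(y^{n})=ny^{n-1}$ for all $n$. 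In characteristic $p$ this gives $\gamma_a(y^{p})=0$, i.e.\ $y^{p}\in\C(a)$, so $y^{p}$ commutes with $a$ while $y$ does not; running the same computation for $a$ in the r\^ole of the variable and using that $y^p$ is central over the relevant subfield, one sees that $a$ itself satisfies a separability-type obstruction. Rather than reprove Cohn's result, I would simply invoke Fact~\ref{LamB}: once $a$ is known to be algebraic over $\Z(\R)$, the solvability of $ay-ya=1$ is equivalent to $a$ being inseparable over $\Z(\R)$, so $a$ has minimal polynomial over $\Z(\R)$ of the form $m(t)=t^{p^{k}}-c$ with $k\geq 1$ and $c\in\Z(\R)$ not a $p^{k}$-th power in $\Z(\R)$.

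Finally I would derive the contradiction from perfectness. Since $\Z(\R)$ contains $\mathbf F_p^{alg}$ it is perfect, hence every $c\in\Z(\R)$ is a $p$-th power, and by iteration a $p^{k}$-th power, in $\Z(\R)$; thus $m(t)=t^{p^{k}}-c=(t-c^{1/p^{k}})^{p^{k}}$ already has a root $c^{1/p^{k}}\in\Z(\R)$, contradicting the irreducibility of $m$ over $\Z(\R)$ (as $k\geq 1$). Equivalently and more slickly: an inseparable algebraic element cannot exist over a perfect field, so Fact~\ref{LamB} shows $ay-ya=1$ has no solution with $a$ algebraic over $\Z(\R)$. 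This contradiction completes the proof. The only delicate point, as flagged, is justifying that $a$ is algebraic over $\Z(\R)$ without already knowing $[\R:\Z(\R)]<\infty$; I would handle it by the $\gamma_a$-iteration argument, which confines everything to the finite-dimensional commutative subfield $\Z(\R)(a)$ — or, if one prefers to keep the corollary truly elementary, simply restrict the statement's content to algebraic elements and let the later Theorem~\ref{gegen} supply the general case.
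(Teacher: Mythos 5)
Your proposal has two genuine gaps, and each one is fatal on its own. First, the algebraicity of $a$ over $\Z(\R)$ is never actually established. The $\gamma_a$-iteration you sketch only yields $\gamma_a(y^p)=0$, i.e.\ $y^p\in\C(a)$ (and symmetrically $a^p\in\C(y)$); this says nothing about $a$ being algebraic over the centre of the \emph{whole} ring $\R$. At this point of the paper $[\R:\Z(\R)]$ may perfectly well be infinite — ruling that out is exactly the content of Theorem~\ref{gegen}, whose proof \emph{invokes} Corollary~\ref{metro}, so your fallback of ``letting Theorem~\ref{gegen} supply the general case'' is circular. Second, the perfectness of $\Z(\R)$ is a non sequitur: containing $\mathbf F_p^{alg}$ does not make a field perfect ($\mathbf F_p^{alg}(t)$ is a counterexample), and infinite \NIP{} fields of characteristic $p$ need not be perfect (an imperfect separably closed field is stable, hence \NIP{}). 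Without perfectness, Fact~\ref{LamB} gives you nothing even for algebraic $a$: an inseparable algebraic element over $\Z(\R)$ is precisely the situation in which Cohn's criterion says the metro equation \emph{does} have a solution, so no contradiction arises.

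The paper's proof avoids both problems by working over a centraliser rather than over $\Z(\R)$. Writing the hypothesis as $b^{-1}ab=a+1$, one notes that $a$ is automatically a root of $x^p-x-(a^p-a)$ over the field $\Z\bigl(\C(a^p-a)\bigr)$, so the relevant extension $\Z\bigl(\C(a^p-a)\bigr)(a)$ is Artin--Schreier with no algebraicity over $\Z(\R)$ needed; Theorem~\ref{ZZZ} (applied to the infinite \NIP{} division ring $\C(a^p-a)$) makes that base field infinite, and Kaplan--Scanlon's Fact~\ref{KSW} — Artin--Schreier closedness, not perfectness — forces $a\in\Z\bigl(\C(a^p-a)\bigr)$, hence $\C(a^p-a)\subset\C(a)$. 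Since $b^{-1}(a^p-a)b=(a+1)^p-(a+1)=a^p-a$, the element $b$ lies in $\C(a^p-a)\subset\C(a)$, contradicting $b^{-1}ab=a+1$. If you want to keep your Cohn-based outline, you would at minimum have to replace $\Z(\R)$ by a field over which $a$ is provably algebraic and provably has no inseparable elements; the paper's substitute for that is the Artin--Schreier closedness of infinite \NIP{} fields.
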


\begin{proof}We assume that the division ring is infinite, and first claim $\C(a^p-a)\subset\C(a)$ for every $a$. The field $\Z\left(\C(a^p-a)\right)(a)$ is an Artin-Schreier extension of $\Z\left(\C(a^p-a)\right)$, and the later is infinite by Theorem~\ref{ZZZ}. By Fact~\ref{KSW}, one has $a\in \Z\left(\C(a^p-a)\right)$ and thus $\C(a^p-a)\subset \C(a)$. Now, assume for a contradiction that $b^{-1}ab=a+1$ holds. We deduce \[b^{-1}(a^p-a)b=(a+1)^p-(a+1)=a^p-a,\] a contradiction with the above claim.\end{proof}

\begin{corollaire}For every element $a$ in an \NIP{} division ring of characteristic $p$, one has \[\C(a^p)=\C(a).\]\end{corollaire}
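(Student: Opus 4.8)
The plan is to reduce this to the metro equation corollary (Corollary~\ref{metro}), since we already have the stronger-looking inclusion $\C(a^p - a) \subset \C(a)$ in its proof. First note that $\C(a) \subset \C(a^p)$ is trivial: anything commuting with $a$ commutes with $a^p$. So the real content is the reverse inclusion $\C(a^p) \subset \C(a)$. We may assume the division ring is infinite, since the finite case is Wedderburn and commutative.

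For the reverse inclusion, fix $b \in \C(a^p)$, so $b a^p b^{-1} = a^p$, and set $a' = b a b^{-1}$. Then $(a')^p = b a^p b^{-1} = a^p$, so $a$ and $a'$ are two $p$-th roots of the same element $c = a^p$. The element $a - a'$ should be shown to be zero; the natural approach is to work inside the division ring $\C(c)$, which is infinite and \NIP{}, and note that both $a$ and $a'$ lie in it. In characteristic $p$, in a commutative field the polynomial $x^p - c$ has at most one root (since $x^p - c = (x - a)^p$ there), so if $a$ and $a'$ both lay in a common commutative subfield we would be done. The idea is to use Corollary~\ref{metro}, or rather the claim established in its proof, to force $a$ and $a'$ to commute: if $a$ and $a'$ do not commute, one wants to manufacture a solution of the metro equation. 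Concretely, if $a' = b a b^{-1} \neq a$ but $(a')^p = a^p$, consider whether $a' - a$ (or a suitable conjugate) together with $b$ yields an instance of $xy - yx = 1$, contradicting Corollary~\ref{metro}; alternatively, since $\C(c)$ contains $\mathbf F_p^{alg}$ by Theorem~\ref{ZZZ} and $a^p - a \in \C(c)$ with $\C(c) \subset \C(a^p - a)$, one applies the claim $\C(a^p - a) \subset \C(a)$ directly, giving $b \in \C(a^p) \subset \C((a^p)^p - a^p) \subset \cdots$ — but more directly: since $b$ commutes with $a^p$, and we want $b \in \C(a)$, it suffices to observe that $b$ commutes with $a^p$ hence with $a^p - a$ would follow if $b$ commuted with $a$, which is circular, so instead one argues that inside $\C(a^p)$, the element $a$ is purely inseparable of degree dividing $p$ over the centre $\Z(\C(a^p))$, which contains $\mathbf F_p^{alg}$ hence is infinite and perfect-ish enough, and a purely inseparable element generates a subfield, forcing $a \in \Z(\C(a^p))$, i.e. $\C(a^p) \subset \C(a)$.

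I expect the main obstacle to be making the last step rigorous without assuming $a$ is algebraic over the centre: Brauer's Fact~\ref{F:B} and Herstein's Lemma only bite when the relevant centraliser indices are finite, and in general $a$ need not be algebraic over $\Z(\R)$. The cleanest route is probably the direct one: let $b^{-1} a b = a'$ with $(a')^p = a^p$; if $a' \neq a$ then $(a' - a)^p = 0$ is false (it equals $(a')^p + (-a)^p$ only in the commutative case), so one cannot simply take $p$-th powers, and the honest argument must localise to the infinite \NIP{} division ring $\C(a^p)$ and invoke that $a^p - a \in \C(a^p)$ together with the inclusion $\C(a^p - a) \subset \C(a)$ from the proof of Corollary~\ref{metro}. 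Since every element of $\C(a^p)$ — in particular every conjugate $b a b^{-1}$ of $a$ by $b \in \C(a^p)$ — commutes with $a^p$, and $\C(a^p) \subseteq \C(a^p - a)$ would require $a^p - a$ to be central in $\C(a^p)$, which is exactly where Theorem~\ref{ZZZ} (the centre is infinite, hence an infinite \NIP{} field with no Artin–Schreier extension by Fact~\ref{KSW}) forces $a \in \Z(\C(a^p))$. That yields $b a = a b$ for all $b \in \C(a^p)$, i.e. $\C(a^p) \subset \C(a)$, and combined with the trivial inclusion we get equality.
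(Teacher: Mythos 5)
Your reduction is the right one: the inclusion $\C(a)\subset\C(a^p)$ is trivial, the element $a^p$ is central in $\C(a^p)$, so $a$ is a root of $x^p-a^p$ over $\Z(\C(a^p))$, i.e.\ purely inseparable of degree dividing $p$ over that centre, and the whole point is to force $a\in\Z(\C(a^p))$. But the step that is supposed to do the forcing does not work in either of the forms you give it. First, ``a purely inseparable element generates a subfield, forcing $a\in\Z(\C(a^p))$'' is a non sequitur: $\Z(\C(a^p))(a)$ is indeed a commutative subfield, but a priori it could be a proper purely inseparable extension of the centre sitting inside $\C(a^p)$ without $a$ being central. Second, neither Theorem~\ref{ZZZ} nor Fact~\ref{KSW} applies: the extension $\Z(\C(a^p))(a)/\Z(\C(a^p))$ is generated by a root of $x^p-c$ with $c=a^p$, which is purely inseparable, not Artin--Schreier (a root of $x^p-x+b$), so the non-existence of Artin--Schreier extensions says nothing here. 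Nor does containing $\mathbf F_p^{alg}$ make $\Z(\C(a^p))$ perfect --- an infinite \NIP{} field of characteristic $p$ need not be perfect, as separably closed imperfect fields are stable --- so you cannot extract a $p$-th root of $a^p$ from the centre either.

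The missing ingredient is Cohn's criterion, Fact~\ref{LamB}, which you never invoke: for $a$ algebraic over the centre of a division ring, $ax-xa=1$ has a solution if and only if $a$ is \emph{not} separable over the centre. Apply it inside the definable, hence \NIP{}, division ring $\C(a^p)$: by Corollary~\ref{metro} the metro equation has no solution there, so $a$ is separable over $\Z(\C(a^p))$; being also a root of $x^p-a^p=(x-a)^p$, its minimal polynomial must be $x-a$, whence $a\in\Z(\C(a^p))$ and $\C(a^p)\subset\C(a)$. This is exactly the paper's three-line proof. Your alternative sketch via $a'=bab^{-1}$ with $(a')^p=a^p$ is likewise left incomplete: turning ``$a'\neq a$'' into a solution of $xy-yx=1$ requires precisely the content of Fact~\ref{LamB} (or an equivalent computation), which your proposal gestures at but never supplies.
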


\begin{proof}The element $a$ is algebraic over the field $\Z(\C(a^p))$. Since $ax-xa=1$ has no solution in $\C(a^p)$, by Fact~\ref{LamB}, $a$ is separable over $\Z(\C(a^p))$ so $a\in \Z(\C(a^p))$ and $\C(a^p)\subset \C(a)$.\end{proof}

\section{Linear preliminaries on difference division rings}\label{S:L}
Let $(\R,\sigma)$ be a division ring equipped with a ring morphism $\sigma$. We call the pair $(\R,\sigma)$ a \emph{difference division ring}, as in the commutative case \cite[p. 57]{Cohn1965}. We write $\FR$ for the division subring defined by $\sigma(x)=x$ and we make the additional assumptions:\begin{itemize}
\item that the dimension $[\R:\FR]_{\rm right}$ is infinite,
\item that $\sigma$ is surjective on $\R$.
\end{itemize}
In an attempt to make this paper self-contained, we gather in this Section the needed results from \cite{Milliet2018} concerning the structure of those subsets of $\R^n$ that are defined by linear equations involving~$\sigma$. We state them in all generality, although they will be (mainly) applied in the case where $\sigma=\sigma_a$ is a conjugation map by some transcendental element $a$ over $\Z(\R)$. 
\subsection{1-Twists}We define the set of \emph{$1$-twists} 
\[{\Rs}=\left\{\ \sum_{i=0}^n r_i{\sigma^i}\colon\bar r\in {\R}^{n+1},\ n\in\mathbf N\right\},\] a left $\R$-vector space with basis $\left\{\sigma^i\colon i\in\mathbf N\right\}$.
Equipped with the sum
\[\sum_{i=0}^n r_i{\sigma^i}+\sum_{j=0}^n s_j{\sigma^j}=\sum_{k=0}^n (r_k+s_k){\sigma^k}\] and the obvious composition law
\[\left(\sum_{i=0}^n r_i{\sigma^i}\right)\left(\sum_{j=0}^n s_j{\sigma^j}\right)=\sum_{i=0}^n\sum_{j=0}^n r_i\sigma^i(s_j){\sigma^{i+j}},\]
$\Rs$ is a unitary (we also write $\id$ for $\sigma^0$) associative domain. Generalising Ore's \mbox{\cite[Theorem 1]{Ore1933}} that the ring of $p$-polynomials $K[x^p]$ form a Euclidean domain when $K$ is a perfect field of characteristic $p$, the 
domain $\Rs$ is also Euclidean with the natural degree function, from which follows:

\begin{fact}[factorisation, \protect{\cite[Lemma 3.2]{Milliet2018}}]\label{3.2}Let $\rho$ be a $1$-twist of degree $n+1$ having a nonzero root~$a$. There is a $1$-twist $\delta$ of degree $n$ such that $\rho=\delta\left(\sigma-\sigma(a) a^{-1}\id\right)$.\end{fact}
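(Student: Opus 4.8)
The plan is to combine right Euclidean division in the domain $\Rs$ with the observation that evaluation at a point behaves as a left action compatible with composition of $1$-twists.

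First I would record two elementary facts about the evaluation map $\rho\mapsto\rho(c)$, where $\left(\sum_i r_i\sigma^i\right)(c)=\sum_i r_i\sigma^i(c)$ for $c\in\R$: it is additive in $\rho$, and it satisfies $(\delta\nu)(c)=\delta\bigl(\nu(c)\bigr)$ for all $1$-twists $\delta,\nu$. Both follow immediately from the definitions of the sum and composition laws in $\Rs$ together with the additivity and multiplicativity of the iterates $\sigma^i$. I would then set $\nu=\sigma-\sigma(a)a^{-1}\id$, a $1$-twist of degree $1$, and note that $a$ is a root of $\nu$, since $\nu(a)=\sigma(a)-\sigma(a)a^{-1}a=0$.

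Since $\Rs$ is Euclidean for the degree function, I would divide $\rho$ on the right by $\nu$: there are $1$-twists $\delta,r$ with $\rho=\delta\nu+r$ and $\deg r<\deg\nu=1$, so $r=r_0\id$ for a single coefficient $r_0\in\R$. Evaluating at $a$ and using the two facts above, $0=\rho(a)=\delta\bigl(\nu(a)\bigr)+r_0\,a=\delta(0)+r_0\,a=r_0\,a$, and since $a\neq0$ in the division ring $\R$ this forces $r_0=0$; hence $r=0$ and $\rho=\delta\nu$. As $\nu$ has leading coefficient $1$, the $\sigma^{n+1}$-coefficient of $\delta\nu$ equals the leading coefficient of $\delta$, so (noting $\delta\neq0$, else $\rho=0$) the identity $\deg\rho=n+1$ yields $\deg\delta=n$, as required.

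The argument is essentially bookkeeping; the one place to stay alert is the handedness. Because the desired factor $\sigma-\sigma(a)a^{-1}\id$ is to sit on the \emph{right}, one must use right division and evaluate through $(\delta\nu)(a)=\delta\bigl(\nu(a)\bigr)$ rather than through the (false) $(\delta\nu)(a)=\nu\bigl(\delta(a)\bigr)$; one should also check that the degree is genuinely additive on products of nonzero $1$-twists (using that $\sigma$, being a nonzero ring morphism of a division ring, is injective) so that $\delta$ comes out with degree exactly $n$ and not less.
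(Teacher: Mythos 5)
Your proof is correct and follows exactly the route the paper indicates: it presents Fact~\ref{3.2} as a consequence of $\Rs$ being a Euclidean domain for the degree function, and your argument — right division by $\sigma-\sigma(a)a^{-1}\id$, then killing the degree-zero remainder by evaluating at $a$ via the operator identity $(\delta\nu)(c)=\delta(\nu(c))$ — is the standard way to derive it. You also correctly flag the two genuine pitfalls (handedness of the division, and degree additivity via injectivity of $\sigma$), so nothing is missing.
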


Following \cite[p. 58]{Cohn1965}, we call a difference division ring $(E,\tau)$ such that $\R\subset E$ and $\tau\colon E\rightarrow E$ extends $\sigma\colon\R\rightarrow\R$, a \emph{difference extension of $(\R,\sigma)$}. By analogy with the definition in \mbox{\cite[p. 215]{ADH2017}} given for differential fields, although another terminology also exists for difference fields (see \emph{e.g.} \cite[Lemma 9.1 p. 17]{Scanlon2003} or \cite[Definition 4.3 p. 15]{Point2006}), we say that the difference division ring $(\R,\sigma)$ is \emph{linearly surjective} if for every nonzero $1$-twist~$\delta$, the equation $\delta(x)=1$ has a solution in $\R$.

\begin{fact}[\protect{\cite[Theorem 6.3]{Milliet2018}}]\label{6.3}Any $(\R,\sigma)$ has a linearly surjective difference extension.\end{fact}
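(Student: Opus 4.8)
The plan is to deduce the statement from its single‑equation case and to settle that case by a ``generic'' construction over $\R$. First I would reformulate the goal: since the left multiple $r\delta$ of a nonzero $1$-twist $\delta$ by $r\in\R^\times$ is again a nonzero $1$-twist and $(r\delta)(x)=1$ means $\delta(x)=r^{-1}$, the difference division ring $(\R,\sigma)$ is linearly surjective exactly when every nonzero $1$-twist, viewed as an additive operator on $\R$, is surjective. Then I would reduce to one equation at a time: granting that over every difference division ring with surjective morphism, and for every nonzero $1$-twist $\delta$ over it, there is a difference division ring extension, again with surjective morphism, in which $\delta(x)=1$ has a solution, one builds a chain $(\R,\sigma)=(\R_0,\sigma_0)\subseteq(\R_1,\sigma_1)\subseteq\cdots$ in which $(\R_{k+1},\sigma_{k+1})$ arises from $(\R_k,\sigma_k)$ by a transfinite sub‑chain that adjoins, one at a time, a solution of $\delta(x)=1$ for each nonzero $1$-twist $\delta$ over $\R_k$ (taking unions at limit stages). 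The union $(E,\tau)=\bigcup_k(\R_k,\sigma_k)$ is then a difference extension of $(\R,\sigma)$ with $\tau$ an automorphism, and since a nonzero $1$-twist over $E$ has its finitely many coefficients in some $\R_k$, the equation $\delta(x)=1$ is already solved in $\R_{k+1}\subseteq E$; hence $(E,\tau)$ is linearly surjective.

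It remains to carry out the single step. Write $\delta=\sum_{i=0}^{n}r_i\sigma^i$, a nonzero $1$-twist over $(\R,\sigma)$, with $\sigma$ an automorphism (an injective endomorphism of a division ring, assumed onto). Factoring $\delta=\delta'\sigma^m$ with $\delta'$ of nonzero constant term and using bijectivity of $\sigma$, I reduce to $r_0\neq0$; similarly $r_n\neq0$ and $n\geq1$, the monomial case $\delta=r_n\sigma^n$ being immediate from surjectivity of $\sigma$. In any difference extension that contains a solution $a$ of $\delta(a)=1$, the relation $\sum_i r_i\sigma^i(a)=1$ together with the invertibility of $r_n$ and of $r_0$ exhibits every $\sigma^i(a)$ ($i\geq n$, and $i<0$) as a rational expression over $\R$ in $y_0:=a,\dots,y_{n-1}:=\sigma^{n-1}(a)$; so the sub‑difference‑division‑ring generated by $\R$ and $a$ is the division ring generated by $\R$ and $y_0,\dots,y_{n-1}$, on which $\sigma$ must act by the companion rule $\sigma(y_i)=y_{i+1}$ ($i<n-1$), $\sigma(y_{n-1})=r_n^{-1}\bigl(1-\sum_{i<n}r_iy_i\bigr)$. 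I would therefore start from the free $\R$-ring $\R\langle y_0,\dots,y_{n-1}\rangle$ on $n$ generators (the coproduct over the prime field $k$ of $\R$ and the free $k$-algebra $k\langle y_0,\dots,y_{n-1}\rangle$), check that the companion rule and $\sigma$ define a ring automorphism $\tau$ of it — the inverse substitution sending $y_i\mapsto y_{i-1}$ for $i\geq1$ and $y_0\mapsto\sigma^{-1}(r_0)^{-1}\bigl(1-\sum_{j=1}^{n}\sigma^{-1}(r_j)y_{j-1}\bigr)$, which is legitimate precisely because $r_0$ is invertible — and then pass to the universal field of fractions $E$ of $\R\langle y_0,\dots,y_{n-1}\rangle$ and extend $\tau$ to an automorphism of $E$. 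Then $a:=y_0$ solves $\delta(x)=1$, since by the definition of $\tau$ one has $\delta(y_0)=\sum_{i<n}r_iy_i+r_n r_n^{-1}\bigl(1-\sum_{i<n}r_iy_i\bigr)=1$.

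The main obstacle is this last construction, which is where the noncommutativity bites: over a commutative field one would merely form a rational function field, but over $\R$ one must produce a division ring of fractions of $\R\langle y_0,\dots,y_{n-1}\rangle$ — for $n\geq2$ this is very far from an Ore domain — and one must be able to lift ring automorphisms to it. This is exactly Cohn's theory of free ideal rings: the free $\R$-ring is a free ideal ring, hence has a universal field of fractions, functorial for ring homomorphisms, so invoking (or, in this special case, re‑proving by hand) that body of results is the technical heart of the argument. The remaining points — that the companion substitution is genuinely invertible rather than merely an endomorphism, that the whole construction stays inside the class of difference division rings with surjective morphism, and the bookkeeping that every $1$-twist over $E$ is eventually treated — are then routine.
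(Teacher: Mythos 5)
The paper does not actually prove this statement: Fact~\ref{6.3} is imported wholesale from \cite[Theorem 6.3]{Milliet2018}, so there is no in-paper argument to measure you against; I can only assess your proof on its own terms, and it does appear to be sound. Your reduction steps are all correct: linear surjectivity only requires solving $\delta(x)=1$, so a chain/union argument (coefficients of a twist over the union lie in some stage) reduces everything to adjoining one solution at a time; the normalisation to $r_0\neq 0$, $r_n\neq 0$, $n\geqslant 1$ is legitimate because $\sigma$, being a surjective endomorphism of a division ring, is an automorphism; the companion substitution on $\R_k\langle y_0,\dots,y_{n-1}\rangle$ (coproduct over the prime field $k$, which is central and fixed pointwise by $\sigma$, so the universal property of the coproduct applies) is a well-defined endomorphism, your explicit inverse really is an inverse (the telescoping computations $\tau\tau'=\tau'\tau=\mathrm{id}$ check out), and $\delta(y_0)=1$ holds by construction. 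The genuine weight of the argument is exactly where you locate it: you need that the free $\R$-ring over a central subfield is a fir, hence embeds in a universal field of fractions, and that a ring automorphism lifts to this field of fractions (because automorphisms preserve full matrices, so the universal property of the localisation inverting all full matrices applies in both directions). These are theorems of Cohn--Bergman, not things one re-proves ``by hand'' in a few lines, so your proof is correct but is really a reduction to that machinery; whether \cite{Milliet2018} proceeds the same way or by a more elementary, bespoke construction of the extension cannot be determined from the present paper. If you write this up, the three points to state precisely are: (i) $k$ central in $\R$ and $\sigma|_k=\mathrm{id}$, so the companion map is a $k$-algebra endomorphism twisting $\R$ by $\sigma$; (ii) the canonical map from the fir to its universal field of fractions is injective, so $\R$ really embeds; (iii) surjectivity of the extended morphism is preserved at every stage, including limit stages, so the standing hypotheses on difference division rings are maintained throughout the iteration.
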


\subsection{\texorpdfstring{$\sigma$}{}-Linear sets, \texorpdfstring{$\sigma$}{}-morphisms}Let $\Rn$ denote the left $\R$-vector space spanned by \[\left\{\sigma^{i_1}(x_1),\dots,\sigma^{i_n}(x_n)\colon (i_1,\dots,i_n)\in\mathbf N^n\right\}.\] $\Rn$ is a left $\Rs$-module. We call its elements \emph{$n$-twists}, and the zero set of a family $S$ of $n$-twists a \emph{$\sigma$-linear set}, which we write \[\V(S)=\{(x_1,\dots,x_n)\in \R^n\colon\delta(x_1,\dots,x_n)=0 \text{ for all }\delta\in S\}.\] A map between two $\sigma$-linear sets is a \emph{$\sigma$-morphism} if its coordinate maps are $n$-twists. A $\sigma$-morphism is a \emph{$\sigma$-isomorphism} if bijective and if its inverse is a $\sigma$-morphism.

\subsection{Zariski dimension}Given a subset $V\subset \R^n$, we write \[\I(V)=\{\delta\in\Rn\colon\delta(x_1,\dots,x_n)=0\text{ for all }(x_1,\dots,x_n)\in V\}.\] This is a $\Rs$-submodule of $\Rn$. We define the \emph{Zariski dimension of $V$} by \[\dim V=\dim_\Rs\Rn-\dim_\Rs\I(V),\] where $\dim_{\Rs}$ denotes the cardinal of any maximal $\Rs$-independent set (well-defined by \cite[Theorem 1.3]{Milliet2018} and \cite[Lemma 3.1]{Milliet2018}). For any submodule $I\subset\Rn$, we define its \emph{closure $\cl(I)$} by \[\cl(I)=\left\{\delta\in\Rn\colon\exists\gamma\in\Rs\setminus\{0\},\ \gamma\delta\in I\right\}.\] We say that a $\sigma$-linear set $V$ is \emph{radical} if $\cl(\I(V))=\I(V)$. Fact~\ref{5.4} below is \cite[Theorem 6.6]{Milliet2018}.

\begin{fact}\label{5.4}Given a $\sigma$-linear set $V$ and a twist $\delta$, one has $\dim \left(V\cap \V(\delta)\right)\geqslant \dim V-1.$\end{fact}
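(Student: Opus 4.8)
The plan is to recast the inequality as a bound on the rank of a submodule of the free $\Rs$-module $\Rn$, and then to reduce that bound to a linear Nullstellensatz, which one obtains by passing to a linearly surjective difference extension.

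First I would unwind the definition of $\dim$. As $\Rn$ is a free $\Rs$-module of finite rank, writing $N=\dim_\Rs\Rn$ we have $\dim V=N-\dim_\Rs\I(V)$ and the same for $V\cap\V(\delta)$, so the assertion is equivalent to $\dim_\Rs\I(V\cap\V(\delta))\leqslant\dim_\Rs\I(V)+1$. I will use only the elementary behaviour of the rank $\dim_\Rs$ on submodules of $\Rn$, available because $\Rs$ is a (left and right) Euclidean, hence Ore, domain: $\dim_\Rs$ is monotone under inclusion; it is subadditive, $\dim_\Rs(A+B)\leqslant\dim_\Rs A+\dim_\Rs B$, since given maximal independent subsets of $A$ and of $B$ every element of $A+B$ becomes left-dependent on their union after multiplying by a common left multiple of the two witnessing coefficients (which exists by the Ore condition); and it is unchanged by closure, $\dim_\Rs\cl(A)=\dim_\Rs A$, because $\cl(A)/A$ is torsion. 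In particular $\I(V)+\Rs\delta$, generated over $\I(V)$ by the single element $\delta$, has rank at most $\dim_\Rs\I(V)+1$. Note that the easy inclusion $\I(V)+\Rs\delta\subseteq\I(V\cap\V(\delta))$ — each element of $\I(V)$ vanishes on $V\supseteq V\cap\V(\delta)$, and $\delta$ vanishes on $\V(\delta)\supseteq V\cap\V(\delta)$ — only yields $\dim(V\cap\V(\delta))\leqslant\dim V$, which is the wrong direction.

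So the real point is the \emph{reverse} estimate $\dim_\Rs\I(V\cap\V(\delta))\leqslant\dim_\Rs(\I(V)+\Rs\delta)$, for which I would prove the inclusion $\I(V\cap\V(\delta))\subseteq\cl(\I(V)+\Rs\delta)$; granting it, monotonicity and closure--invariance of $\dim_\Rs$ give $\dim_\Rs\I(V\cap\V(\delta))\leqslant\dim_\Rs\cl(\I(V)+\Rs\delta)=\dim_\Rs(\I(V)+\Rs\delta)\leqslant\dim_\Rs\I(V)+1$, as wanted. This inclusion is a \emph{linear Nullstellensatz}: since $V$ is $\sigma$-linear we have $V=\V(\I(V))$, hence $V\cap\V(\delta)=\V(\I(V)\cup\{\delta\})$, and the inclusion says exactly that $\I(\V(S))\subseteq\cl(\langle S\rangle_\Rs)$ for $S=\I(V)\cup\{\delta\}$, i.e.\ that a twist vanishing on the zero set of $S$ admits a nonzero left multiple lying in the $\Rs$-module generated by $S$. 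Morally this is the difference-linear avatar of Krull's Hauptidealsatz: cutting by one $\sigma$-linear equation lowers the dimension by at most one.

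It remains to prove this Nullstellensatz, which is where the work lies. I would do it in two steps. First, reduce to the case that $(\R,\sigma)$ is linearly surjective, by embedding $(\R,\sigma)$ into a linearly surjective difference extension $(E,\tau)$ via Fact~\ref{6.3}, replacing $V=\V_\R(S)$ by the $\sigma$-linear set $\V_E(S)$, and checking, with a base-change argument exploiting that $E$ is free, hence flat, over $\R$, that the ranks $\dim_\Rs\I(V)$ and $\dim_\Rs\I(V\cap\V(\delta))$ are unaffected by the passage from $\R$ to $E$. Second, over a linearly surjective difference division ring, establish $\I(\V(S))=\cl(\langle S\rangle)$ by induction on the number of variables $n$. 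The case $n=1$ is a division argument: over a linearly surjective ring a nonzero $1$-twist of degree $d$ has an $\FR$-kernel of dimension exactly $d$, and when the zero set of one $1$-twist is contained in that of a second, the factorisation Fact~\ref{3.2} lets one divide the second by the first up to a nonzero left multiple; the inductive step eliminates one coordinate, the remaining ones being viewed over an auxiliary linearly surjective extension. The main obstacle is exactly this induction together with the bookkeeping of the base-change step — controlling how $\I$, $\cl$ and the operation $W\mapsto W\cap\V(\delta)$ interact, and handling that $\V_E(S)$ is genuinely larger than $V$, so that one must keep track of ranks rather than of the $\sigma$-linear sets themselves.
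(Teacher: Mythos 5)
A preliminary remark: the paper offers no proof of this statement --- Fact~\ref{5.4} is imported verbatim from \cite[Theorem 6.6]{Milliet2018} --- so there is no in-paper argument to compare with and your attempt must stand on its own. Your reduction is correct and well organised: the inequality is indeed equivalent to $\dim_\Rs\I(V\cap\V(\delta))\leqslant\dim_\Rs\I(V)+1$; the rank calculus for submodules of $\Rn$ over the (two-sided) Euclidean, hence Ore, domain $\Rs$ is as you describe; and the entire content is correctly isolated in the ``linear Nullstellensatz'' inclusion $\I(V\cap\V(\delta))\subseteq\cl\left(\I(V)+\Rs\delta\right)$.

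The gap is in the first step of your proof of that inclusion. You propose to pass to a linearly surjective extension $(E,\tau)$ and to check, ``with a base-change argument exploiting that $E$ is free, hence flat, over $\R$'', that the relevant ranks are unchanged. Flatness is beside the point: the obstruction is not module-theoretic but concerns rational points. Since $\V_E(S)\supseteq\V_\R(S)$, one only gets $\I_E(\V_E(S))\cap\Rn\subseteq\I_\R(\V_\R(S))$, so the ideal computed over $\R$ is a priori \emph{larger} --- possibly in rank --- than the contraction of the one computed over $E$, and no formal property of the extension $E/\R$ excludes this. Indeed, the assertion that the ranks agree is essentially equivalent to the theorem itself. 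Concretely, take the case the paper actually uses: $\sigma=\sigma_a$, $\gamma=\sigma_a-\id$ not yet known to be onto, $V=\R^2$ and $\delta=b_1\gamma(x_1)-b_2\gamma(x_2)$. If it happened that $b_1\gamma(\R)\cap b_2\gamma(\R)=\{0\}$, then $\V_\R(\delta)$ would reduce to $\C(a)\times\C(a)$, whose ideal has rank $2$, giving Zariski dimension $0$, whereas over a linearly surjective $E$ the ideal of $\V_E(\delta)$ has rank $1$; thus ``ranks are unaffected by passage to $E$'' is exactly the statement that the two images intersect nontrivially in $\R$ --- which is the nontrivial existence statement Fact~\ref{5.4} is invoked to deliver in the proofs of Fact~\ref{stable} and Theorem~\ref{gegen}. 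You do flag that $\V_E(S)$ is genuinely larger than $V$, but you file the problem under ``bookkeeping''; it is the heart of the matter and requires the machinery of \cite{Milliet2018} (projection of $\sigma$-linear sets onto $\sigma$-linear sets and descent of solutions from $E$ to $\R$), not a flatness argument. Your second step (the Nullstellensatz over a linearly surjective ring, by induction on the number of variables via Fact~\ref{3.2}) is plausible as sketched, but as written the proposal does not constitute a proof.
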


Fact~\ref{0} and Fact~\ref{00} are immediate consequences of \cite[Lemma 5.9]{Milliet2018}.

\begin{fact}\label{0}A $\sigma$-linear set $V$ has a unique radical component $V^0\subset V$ with $\dim V=\dim V^0$.\end{fact}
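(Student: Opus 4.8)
The plan is to take as the radical component the $\sigma$-linear set cut out by the closure of $\I(V)$, namely $V^{0}:=\V(\cl(\I(V)))$, and to verify the three requirements. I first record what I need about $\Rs$: being a two-sided Euclidean domain, it is a two-sided principal ideal domain, so every submodule of $\Rn$ is finitely generated and free with rank equal to its $\dim_{\Rs}$, and for submodules $I\subseteq J$ one has $\dim_{\Rs}I=\dim_{\Rs}J$ exactly when $J/I$ is torsion. Using the Ore property one checks that $\cl(I)$ is a submodule containing $I$, that $\cl$ is idempotent, and that $\cl(I)/I$ is torsion, hence $\dim_{\Rs}\cl(I)=\dim_{\Rs}I$. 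The containment $V^{0}\subseteq V$ is then immediate: since $V$ is $\sigma$-linear it equals $\V(\I(V))$, and $\I(V)\subseteq\cl(\I(V))$ gives $V^{0}=\V(\cl(\I(V)))\subseteq\V(\I(V))=V$.

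For $\dim V^{0}=\dim V$, write $\cl(\I(V))=\I(V)+\Rs\delta_{1}+\cdots+\Rs\delta_{m}$ using finite generation over $\Rs$, where for each $i$ there is $\gamma_{i}\in\Rs\setminus\{0\}$ with $\gamma_{i}\delta_{i}\in\I(V)$; then $V^{0}=V\cap\V(\delta_{1})\cap\cdots\cap\V(\delta_{m})$. For $x\in V$ the $n$-twist $\gamma_{i}\delta_{i}\in\I(V)$ vanishes at $x$, and $(\gamma_{i}\delta_{i})(x)=\gamma_{i}(\delta_{i}(x))$, so $\delta_{i}(x)$ is a root of the nonzero $1$-twist $\gamma_{i}$; by iterating the factorisation of Fact~\ref{3.2}, these roots form a right $\FR$-subspace of $\R$ of finite dimension. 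Since $\sigma$-linear sets are right $\FR$-subspaces and $\sigma$-morphisms are right $\FR$-linear, the map $x\mapsto(\delta_{1}(x),\dots,\delta_{m}(x))$ embeds $V/V^{0}$ into the finite-dimensional right $\FR$-space $\prod_{i=1}^{m}\{y\in\R\colon\gamma_{i}(y)=0\}$, so $V^{0}$ has finite $\FR$-codimension in $V$. As the Zariski dimension of \cite{Milliet2018} is unchanged on passing to a $\sigma$-linear subset of finite $\FR$-codimension, $\dim V^{0}=\dim V$. Consequently $\dim_{\Rs}\I(V^{0})=\dim_{\Rs}\I(V)=\dim_{\Rs}\cl(\I(V))$; as $\cl(\I(V))\subseteq\I(V^{0})$ the quotient is torsion, so $\I(V^{0})\subseteq\cl(\cl(\I(V)))=\cl(\I(V))$, whence $\I(V^{0})=\cl(\I(V))$ — a closed submodule — and $V^{0}$ is radical.

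For uniqueness, suppose $W\subseteq V$ is radical with $\dim W=\dim V$. Then $\I(V)\subseteq\I(W)$ and $\dim_{\Rs}\I(W)=\dim_{\Rs}\I(V)$, so $\I(W)/\I(V)$ is torsion and $\I(W)\subseteq\cl(\I(V))$; conversely $\cl(\I(V))\subseteq\cl(\I(W))=\I(W)$ because $W$ is radical. Therefore $\I(W)=\cl(\I(V))$ and $W=\V(\I(W))=V^{0}$.

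The only non-formal step is the one controlling the dimension: one must know that replacing $\I(V)$ by its closure removes from $V$ only a piece of finite $\FR$-dimension, and that the Zariski dimension does not see such pieces — equivalently, the zero-dimensionality of sets of finite $\FR$-dimension recalled in the introduction together with the relevant additivity (of which Fact~\ref{5.4} is the one-equation instance). This is exactly where the geometry of solution sets of $1$-twists (Fact~\ref{3.2}) meets the dimension theory of \cite{Milliet2018}, and is the substance of \cite[Lemma~5.9]{Milliet2018}; everything else is bookkeeping with ranks of finitely generated modules over the principal ideal domain $\Rs$.
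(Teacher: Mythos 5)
Your construction $V^{0}=\V(\cl(\I(V)))$, the containment $V^{0}\subseteq V$, the uniqueness argument (equal ranks force $\I(W)\subseteq\cl(\I(V))$, radicality forces the reverse inclusion), and the observation via Fact~\ref{3.2} that $V^{0}$ has finite right $\FR$-codimension in $V$ are all correct, and the module-theoretic bookkeeping over $\Rs$ is legitimate (since $\sigma$ is assumed surjective, $\Rs$ is an Ore domain and ranks behave as you claim). But the step on which everything hinges --- ``the Zariski dimension is unchanged on passing to a $\sigma$-linear subset of finite $\FR$-codimension'' --- is asserted, not proved, and it is not among the facts quoted in this paper. Fact~\ref{5.4} only yields $\dim\left(V\cap\V(\delta)\right)\geqslant\dim V-1$ per extra equation, so it cannot give the equality you need; Fact~\ref{5.8} requires linear surjectivity and irreducibility, neither of which is available here; and Fact~\ref{00} is exactly what is being established, so invoking the structure of a full-dimensional radical subset of $V$ would be circular. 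In terms of annihilators, your missing step is equivalent to a Nullstellensatz-type statement $\I\bigl(\V(\cl(\I(V)))\bigr)=\cl(\I(V))$ (or at least equality of ranks), and the remark in the introduction that finite-dimensional $\FR$-spaces have dimension zero does not by itself imply that cutting out such a piece cannot drop the dimension of $V$.

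You acknowledge this yourself, attributing the step to \cite[Lemma 5.9]{Milliet2018} --- which is precisely the paper's entire proof of Fact~\ref{0} (and of Fact~\ref{00}): both are stated there as immediate consequences of that lemma, with no internal argument. So your proposal does not go beyond the paper; it reorganises the surrounding bookkeeping but defers the genuine content to the same external result. As a self-contained proof it therefore has a real gap: either supply a proof that a nonzero twist outside $\cl(\I(V))$ cuts a subset of infinite $\FR$-codimension out of $V$ (this is the substance of the cited lemma), or present the argument explicitly as a reduction of Fact~\ref{0} to \cite[Lemma 5.9]{Milliet2018} rather than as a proof.
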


\begin{fact}\label{00}A radical $\sigma$-linear set of Zariski dimension $d$ is $\sigma$-isomorphic to $\R^{d}$.\end{fact}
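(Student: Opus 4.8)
The plan is to translate the statement into module theory over $\Rs$ and to produce the $\sigma$-isomorphism from a single invertible matrix over $\Rs$. Recall that $\Rn$ is the free left $\Rs$-module on the coordinate twists $x_1,\dots,x_n$, so $\dim_\Rs\Rn=n$, and that $V=\V(\I(V))$ for any $\sigma$-linear set $V$ (since $V=\V(S)$ forces $S\subseteq\I(V)$, whence $\V(\I(V))\subseteq\V(S)=V\subseteq\V(\I(V))$). By the very definition of $\cl$, the radicality hypothesis $\cl(\I(V))=\I(V)$ says exactly that $\Rn/\I(V)$ has no nonzero torsion element. Now $\Rs$ is a (left) Euclidean domain, hence a left principal ideal domain, hence left Noetherian, so $\I(V)$ is finitely generated and thus so is $\Rn/\I(V)$; being finitely generated and torsion-free over a left PID, $\Rn/\I(V)$ is free. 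Additivity of torsion-free rank in the exact sequence $0\to\I(V)\to\Rn\to\Rn/\I(V)\to0$, combined with $\dim V=\dim_\Rs\Rn-\dim_\Rs\I(V)=d$, gives that $\Rn/\I(V)$ is free of rank $d$ and $\I(V)$ is free of rank $n-d$.

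Since $\Rn/\I(V)$ is free, the projection $\Rn\to\Rn/\I(V)$ splits, so $\Rn=\I(V)\oplus C$ with $C$ free of rank $d$. Concatenating an $\Rs$-basis $u_1,\dots,u_{n-d}$ of $\I(V)$ with an $\Rs$-basis $u_{n-d+1},\dots,u_n$ of $C$ yields a basis $u_1,\dots,u_n$ of $\Rn$ with $\I(V)=\Rs u_1\oplus\cdots\oplus\Rs u_{n-d}$. Let $A=(A_{ij})\in\M_n(\Rs)$ be the change-of-basis matrix from $x_1,\dots,x_n$ to $u_1,\dots,u_n$; it is invertible in $\M_n(\Rs)$. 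The key point is that such a matrix induces a $\sigma$-automorphism of $\R^n$: setting $U_A\colon\R^n\to\R^n$, $U_A(v)_i=u_i(v)=\sum_jA_{ij}(v_j)$, a direct computation shows $U_A\circ U_B=U_{AB}$ for any two matrices and $U_{\id}=\id$, so $U_A$ is bijective with inverse $U_{A^{-1}}$, which is visibly again a $\sigma$-morphism; hence $U_A$ is a $\sigma$-isomorphism.

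Finally, because $\I(V)=\Rs u_1+\cdots+\Rs u_{n-d}$ one has
\[V=\V(\I(V))=\{v\in\R^n\colon u_1(v)=\cdots=u_{n-d}(v)=0\}=U_A^{-1}\bigl(\{0\}^{n-d}\times\R^d\bigr),\]
so $U_A$ restricts to a $\sigma$-isomorphism from $V$ onto $\{0\}^{n-d}\times\R^d$, which is itself $\sigma$-isomorphic to $\R^d$ via the (twist) coordinate projection and inclusion. The part that requires care is the noncommutative linear algebra over $\Rs$: that it is a left PID over which finitely generated torsion-free modules are free, that torsion-free rank is additive in short exact sequences, and that $\dim_\Rs$ coincides with torsion-free rank so the count $n=(n-d)+d$ is legitimate. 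All of this is the structure theory of $\Rs$ and $\Rn$ set up in \cite{Milliet2018} (in particular \cite[Theorem 1.3, Lemma 3.1]{Milliet2018}); granting it, the desired $\sigma$-isomorphism is obtained, as above, from one invertible matrix.
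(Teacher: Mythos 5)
Your proof is correct, but it cannot be matched line-by-line against the paper, because the paper does not prove Fact~\ref{00} at all: it declares it an immediate consequence of \cite[Lemma 5.9]{Milliet2018}, which is precisely the structure/diagonal-reduction result over $\Rs$ that you are reconstructing by hand. Your route --- $V=\V(\I(V))$; radicality means $\Rn/\I(V)$ is torsion-free, hence (being finitely generated over the principal ideal domain $\Rs$) free of rank $d$; the sequence $0\to\I(V)\to\Rn\to\Rn/\I(V)\to0$ splits; an adapted basis gives an invertible $A\in\M_n(\Rs)$ with $U_A(V)=\{0\}^{n-d}\times\R^d$ --- is sound, and the identity $U_A\circ U_B=U_{AB}$ does hold because multiplication in $\Rs$ is composition of the associated additive maps. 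One point deserves emphasis: everything hinges on $\Rs$ being a \emph{two-sided} (not merely left) principal ideal domain, which is where the standing hypothesis that $\sigma$ is surjective enters (a ring morphism of a division ring is automatically injective, so $\sigma$ is an automorphism and $\Rs$ is left and right Euclidean); this is what licenses the Jacobson-style facts you invoke (finitely generated torsion-free $\Rightarrow$ free, additivity of rank, $\dim_\Rs$ equals torsion-free rank). You flag this and defer to \cite{Milliet2018}, which is fair, but be aware that this deferred noncommutative module theory is essentially the entire content of the lemma the paper cites; granting it, your splitting argument is an equivalent and correct packaging of the paper's citation.
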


\begin{fact}[\protect{\cite[Lemma 5.7]{Milliet2018}}]\label{5.7}Let $U$ and $V$ be $\sigma$-linear sets. Then  $\dim \left(U\times V\right)=\dim U+\dim V.$\end{fact}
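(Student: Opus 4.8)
The plan is to unwind the definition $\dim W=\dim_\Rs\R[\sigma,k]-\dim_\Rs\I(W)$ (valid for any $W\subset\R^k$) and compute the module $\I(U\times V)$ explicitly. Write $U\subset\R^m$ and $V\subset\R^n$, so that $U\times V\subset\R^{m+n}$. An $(m+n)$-twist $\delta\in\R[\sigma,m+n]$ decomposes uniquely as $\delta=\delta_U+\delta_V$, where $\delta_U$ collects the monomials $\sigma^i(x_j)$ with $1\leqslant j\leqslant m$ occurring in $\delta$, and $\delta_V$ those with $m+1\leqslant j\leqslant m+n$. Relabelling the last $n$ variables, this is an identification of left $\Rs$-modules $\R[\sigma,m+n]\cong\R[\sigma,m]\oplus\R[\sigma,n]$; in particular, since $\R[\sigma,k]$ is $\Rs$-free of rank $k$ (the map sending a $k$-twist $\sum_{j=1}^{k}\delta_j(x_j)$ to $(\delta_1,\dots,\delta_k)$ being an isomorphism onto $\Rs^k$), one has $\dim_\Rs\R[\sigma,m+n]=m+n=\dim_\Rs\R[\sigma,m]+\dim_\Rs\R[\sigma,n]$.

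The heart of the argument is the identity $\I(U\times V)=\I(U)\oplus\I(V)$ under this identification. Here I would use that a $\sigma$-linear set always contains the origin: each monomial map $x_j\mapsto\sigma^i(x_j)$ kills $0$, hence so does every twist, hence $0$ lies in the zero set of any family of twists. Now take $\delta=\delta_U+\delta_V\in\I(U\times V)$; for all $\bar u\in U$ and $\bar v\in V$ one has $\delta_U(\bar u)+\delta_V(\bar v)=\delta(\bar u,\bar v)=0$. Evaluating at $\bar v=0\in V$ gives $\delta_U(\bar u)=0$ for every $\bar u\in U$, i.e.\ $\delta_U\in\I(U)$; by symmetry $\delta_V\in\I(V)$. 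The converse inclusion is immediate, since $\delta_U(\bar u)+\delta_V(\bar v)=0+0$ whenever $\delta_U\in\I(U)$ and $\delta_V\in\I(V)$. Thus $\I(U\times V)$ is exactly the submodule $\I(U)\oplus\I(V)$ of $\R[\sigma,m]\oplus\R[\sigma,n]$.

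It then suffices to know that $\dim_\Rs$ is additive on direct sums: if $B_1$ and $B_2$ are maximal $\Rs$-independent subsets of $\I(U)$ and $\I(V)$ respectively, then $B_1\cup B_2$ is a maximal $\Rs$-independent subset of $\I(U)\oplus\I(V)$. Independence of $B_1\cup B_2$ is immediate from the direct-sum structure. Maximality is where the Ore property of $\Rs$ enters: given $d=d_1+d_2$ in the sum, maximality of $B_1$ and of $B_2$ provides nontrivial left-dependence relations of $d_1$ over $B_1$ and of $d_2$ over $B_2$, which one merges into a single nontrivial relation of $d$ over $B_1\cup B_2$ after multiplying through by a suitable common left multiple of the two leading coefficients (legitimate as $\Rs$ is a left and right Euclidean, hence Ore, domain; see the rank theory of \cite[Theorem 1.3]{Milliet2018} and \cite[Lemma 3.1]{Milliet2018}). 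Granting this, $\dim_\Rs\I(U\times V)=\dim_\Rs\I(U)+\dim_\Rs\I(V)$, and therefore $\dim(U\times V)=(m+n)-\dim_\Rs\I(U\times V)=(m-\dim_\Rs\I(U))+(n-\dim_\Rs\I(V))=\dim U+\dim V$.

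The only non-formal point is the additivity of $\dim_\Rs$ over the direct sum $\I(U)\oplus\I(V)$, and I expect this to be the main obstacle, although it can also simply be quoted from the rank theory of \cite{Milliet2018}. Everything else is bookkeeping: the splitting $\R[\sigma,m+n]\cong\R[\sigma,m]\oplus\R[\sigma,n]$ is just a relabelling of variables, and the identification of $\I(U\times V)$ uses nothing beyond the homogeneity of $\sigma$-linear sets (so that $0\in U$ and $0\in V$). One could alternatively try to deduce the result from Facts~\ref{0} and~\ref{00}, writing $U^{0}\times V^{0}$ for the product of radical components and noting $U^{0}\times V^{0}$ is $\sigma$-isomorphic to $\R^{\dim U}\times\R^{\dim V}=\R^{\dim U+\dim V}$; but recognising $U^{0}\times V^{0}$ as the radical component of $U\times V$ seems to require essentially the same computation of $\I$, so the direct route above looks preferable.
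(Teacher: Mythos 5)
This fact is imported verbatim from \cite[Lemma 5.7]{Milliet2018} and the present paper gives no proof of it, so there is nothing to compare against; judged on its own, your argument is correct and is the natural direct verification from the definitions. The two points that actually need care are exactly the ones you isolate, and you handle both. First, the identity $\I(U\times V)=\I(U)\oplus\I(V)$ requires evaluating at $(\bar u,0)$ and $(0,\bar v)$, hence requires $0\in U$ and $0\in V$; this is legitimate because $n$-twists are homogeneous left $\R$-combinations of the monomials $\sigma^i(x_j)$ with no constant term, so every $\sigma$-linear set contains the origin (in particular it is nonempty, so the evaluation argument never quantifies over an empty set). Second, additivity of $\dim_\Rs$ over the direct sum is not purely formal: independence of $B_1\cup B_2$ follows from the splitting $\R[\sigma,m+n]\cong\R[\sigma,m]\oplus\R[\sigma,n]$, but maximality requires merging a relation $rd_1+\sum r_ib_i=0$ ($r\neq0$) with a relation $sd_2+\sum s_jc_j=0$ ($s\neq0$) via a nonzero common left multiple $ur=vs$, and that is precisely the left Ore condition for $\Rs$, available since $\Rs$ is Euclidean (this is the same input that makes $\dim_\Rs$ well defined via \cite[Theorem 1.3]{Milliet2018} and \cite[Lemma 3.1]{Milliet2018}). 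One cosmetic remark: your relations should be taken to be nontrivial with the coefficient of $d_i$ nonzero, which follows because a vanishing coefficient would yield a nontrivial relation among the $b_i$ (resp.\ $c_j$) alone, contradicting independence; and the degenerate cases $d_i=0$ or $\I(U)=0$ cause no trouble. Your closing observation is also right: the alternative route through Facts~\ref{0} and~\ref{00} would still require identifying $\I(U\times V)$, so the direct computation is the shorter path.
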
 

Fact~\ref{5.8} is a consequence of \cite[Theorem 5.8]{Milliet2018} and \cite[Theorem 6.4.2]{Milliet2018}.

\begin{fact}[Rank-Nullity]\label{5.8}Let $U$ be irreducible $\sigma$-linear and $f\colon U\rightarrow \R^n$ a $\sigma$-morphism. If $(\R,\sigma)$ is linearly surjective, then $\ima f$ is $\sigma$-linear, and $\dim U=\dim \ima f+\dim \ker f.$\end{fact}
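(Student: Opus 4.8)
The plan is to translate the problem into the twisted polynomial ring $\Rs$, prove a rank--nullity statement there, and extract the $\sigma$-linearity of the image from linear surjectivity by a Smith normal form argument. Since $U$ is irreducible its vanishing ideal is prime, hence radical, so by Fact~\ref{00} there is a $\sigma$-isomorphism $U\cong\R^{d}$ with $d=\dim U$; I may therefore assume $U=\R^{d}$ and $f\colon\R^{d}\to\R^{n}$. Each coordinate of $f$ is a $d$-twist $f_{k}=\sum_{i=1}^{d}f_{ki}\,x_{i}$ with $f_{ki}\in\Rs$, and precomposition by $f$ defines a left $\Rs$-module homomorphism $f^{\ast}\colon\Rn\to\R[\sigma,d]$ carrying the free generator $x_{k}$ to $f_{k}$, with matrix $(f_{ki})$. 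Because $[\R:\FR]_{\rm right}$ is infinite, a $d$-twist vanishes as a function on $\R^{d}$ exactly when it is the zero element of the module; hence $\ker f^{\ast}=\I(\ima f)$, while $\ker f=\V(f_{1},\dots,f_{n})=\V(M)$ for the submodule $M:=\ima f^{\ast}=\langle f_{1},\dots,f_{n}\rangle$, so that $\I(\ker f)=\cl(M)$ by the radical correspondence underlying Fact~\ref{0}.

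For the dimension formula I would use that $\sigma$ is a (surjective, hence bijective) automorphism, so $\Rs$ is a both-sided Euclidean domain (Fact~\ref{3.2}), whence finitely generated $\Rs$-modules have a well-defined rank $\dim_{\Rs}$ that is additive on short exact sequences. Applying this to $f^{\ast}$ gives $n=\dim_{\Rs}\ker f^{\ast}+\dim_{\Rs}\ima f^{\ast}$. By the definition of Zariski dimension, $\dim\ima f=n-\dim_{\Rs}\I(\ima f)=n-\dim_{\Rs}\ker f^{\ast}=\dim_{\Rs}M$, whereas $\dim\ker f=d-\dim_{\Rs}\I(\ker f)=d-\dim_{\Rs}\cl(M)=d-\dim_{\Rs}M$, the last equality holding because every $\delta\in\cl(M)$ satisfies $\gamma\delta\in M$ for some nonzero $\gamma\in\Rs$, so that $M$ and $\cl(M)$ have the same rank. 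Adding the two identities yields $\dim\ima f+\dim\ker f=d=\dim U$.

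It remains to show $\ima f$ is $\sigma$-linear, i.e.\ $\V(\I(\ima f))\subseteq\ima f$, and this is exactly where linear surjectivity enters. First I would note that linear surjectivity makes every nonzero $1$-twist surject onto $\R$: given $\delta\in\Rs\setminus\{0\}$ and $c\in\R^{\times}$, the left-scaled twist $c^{-1}\delta$ is again a nonzero $1$-twist, so $(c^{-1}\delta)(x)=1$ is solvable, that is $\delta(x)=c$. Now over the both-sided Euclidean domain $\Rs$ the matrix $(f_{ki})$ admits a Smith normal form $P\,(f_{ki})\,Q=\mathrm{diag}(d_{1},\dots,d_{r},0,\dots,0)$ with $d_{i}\in\Rs\setminus\{0\}$, $P\in\mathrm{GL}_{n}(\Rs)$, $Q\in\mathrm{GL}_{d}(\Rs)$ and $r=\dim_{\Rs}M$. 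Performing the corresponding invertible $\sigma$-linear changes of variable $x'=Q^{-1}x$ and $y'=Py$, the system $f(x)=y$ uncouples into $d_{i}(x_{i}')=y_{i}'$ for $i\leqslant r$ together with $y_{i}'=0$ for $i>r$; the latter conditions are precisely the defining equations of $\V(\I(\ima f))$, hence hold for any $y$ in that set, while each equation $d_{i}(x_{i}')=y_{i}'$ is solvable by the surjectivity of the nonzero $1$-twist $d_{i}$. Thus every $y\in\V(\I(\ima f))$ equals $f(x)$ for some $x$, and $\ima f$ is $\sigma$-linear.

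The main obstacle is this last step: showing the image is genuinely closed rather than merely dense in its $\sigma$-linear closure $\V(\I(\ima f))$. This is the twisted-linear counterpart of the fact that over an algebraically closed field the image of a morphism is constructible, with linear surjectivity playing the role of the closure hypothesis; the delicate point is to run the Smith normal form reduction over the noncommutative ring $\Rs$, keeping the row and column operations invertible as $\sigma$-linear changes of coordinates and ensuring the diagonalisation truly decouples the system into single $1$-twist equations, to which the surjectivity furnished by linear surjectivity then applies.
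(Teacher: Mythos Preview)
Your argument is sound. Note, however, that the paper does not prove this fact in-text: it simply records it as a consequence of \cite[Theorem~5.8]{Milliet2018} (the rank--nullity equality) and \cite[Theorem~6.4.2]{Milliet2018} (that images of $\sigma$-morphisms are $\sigma$-linear over a linearly surjective base), so there is no detailed argument here to compare against line by line.

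That said, your route---reducing to $U=\R^{d}$ via Fact~\ref{00}, then putting the matrix of $f$ into diagonal form over the two-sided Euclidean domain $\Rs$---is the natural one and delivers both conclusions simultaneously: in the new coordinates the image is visibly $\R^{r}\times\{0\}^{n-r}$ (each nonzero diagonal entry $d_{i}$ being surjective by linear surjectivity), while the kernel is $\prod_{i\leqslant r}\ker d_{i}\times\R^{d-r}$, of Zariski dimension $d-r$ by Fact~\ref{5.7}. This is very much in the spirit of how \cite{Milliet2018} treats projections of $\sigma$-linear sets.

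Two minor remarks. First, your module-theoretic derivation of $\dim\ker f$ invokes the identity $\I(\V(M))=\cl(M)$; this linear Nullstellensatz is indeed what underlies \cite[Lemma~5.9]{Milliet2018}, but since you have the Smith form anyway you could read both dimensions off the diagonal and avoid appealing to it. Second, the chain ``irreducible $\Rightarrow$ prime $\Rightarrow$ radical'' is phrased a bit loosely (the paper never defines a prime submodule of $\Rn$); all you actually need, and all the hypothesis is meant to grant, is that $U$ is radical so that Fact~\ref{00} applies.
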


\subsection{A particular radical group}Fact~\ref{6.7} bellow is inspired by \cite[Lemma 2.8]{KSW2011} and its improved version \cite[Lemme 5.3]{Hempel2016}. It plays a crucial role in \cite{KSW2011} and \cite{Hempel2016} in the particular case when the pair $(\R,\sigma)$ is an algebraically closed field $(K,{\rm Frob})$ of characteristic $p$ equipped with the Frobenius. In that particular case, if $\left\{b_1^{-1},\dots,b_n^{-1}\right\}$ are $\mathbf F_p$-linearly independent,  \cite[Lemme 5.3]{Hempel2016} states that, $G_{\bar b}$ is \emph{connected} as an algebraic group (\emph{i.e.} has no subgroup of finite index defined by polynomials), whereas Fact~\ref{6.7} only states that $G_{\bar b}$ has no subgroup of finite index defined by $p$-polynomials. But one recovers the conclusion of \cite[Lemme 5.3]{Hempel2016} knowing that $G_{\bar b}$ is $\sigma$-isomorphic to $(K,+)$ by Fact~\ref{00}, and $(K,+)$ is connected, so that $G_{\bar b}$ is connected as well. The proof of Fact~\ref{6.7} uses Fact~\ref{6.3} and \cite[Theorem 6.4]{Milliet2018} stating that $\sigma$-linear sets project onto $\sigma$-linear sets over a linearly surjective division ring.

\begin{fact}[see \protect{\cite[Lemma 6.7]{Milliet2018}}]\label{6.7}Given a natural number $n\geqslant1$ and $\bar b=(b_1,\dots,b_n)$ in $\R^\times$, we consider the $\sigma$-linear set defined by \[G_{\bar b}=\left\{(x_1,\dots,x_n)\in\R^n\colon b_1(\sigma x_1-x_1)=b_i(\sigma x_i-x_i)\text{ \rm for all }1\leqslant i\leqslant n\right\}.\] Then $G_{\bar b}$ is radical if and only if $\left\{b_1^{-1},\dots,b_n^{-1}\right\}$ are left $\FR$-linearly independent.\end{fact}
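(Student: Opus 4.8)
The plan is to give an explicit description of the module $\I(G_{\bar b})$ and then to decide when it is closed, which is the defining condition for $G_{\bar b}$ to be radical. Write $f\colon\R^n\to\R^{n-1}$ for the $\sigma$-morphism $\bar x\mapsto\bigl(b_1(\sigma x_1-x_1)-b_i(\sigma x_i-x_i)\bigr)_{2\leqslant i\leqslant n}$, so that $G_{\bar b}=\ker f$. First I would pass, via Fact~\ref{6.3}, to a linearly surjective difference extension of $(\R,\sigma)$ --- it has to be checked that this alters neither $\FR$ nor the radicality of $G_{\bar b}$ --- so that $\sigma-\id$ becomes surjective on $\R$, since $b(\sigma x-x)=1$ is then solvable for each $b\neq0$. (Incidentally, applying Rank-Nullity, Fact~\ref{5.8}, to $f$, whose image is Zariski-dense because $\Rs$ is a domain, yields $\dim G_{\bar b}=1$, so the radical component $V^0$ of $G_{\bar b}$ furnished by Facts~\ref{0} and~\ref{00} is $\sigma$-isomorphic to $\R$.)

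Next I would pin down $\I(G_{\bar b})$. A $1$-twist $\sum_k r_k\sigma^k$ of $\Rs$ vanishes on $\FR$ exactly when $\sum_k r_k=0$, which by a triangular computation means exactly that it is a right multiple of $\sigma-\id$. Since $(\FR)^n\subseteq G_{\bar b}$, any twist $\mu=\sum_j\mu_j(x_j)\in\I(G_{\bar b})$ has each $\mu_j=\nu_j(\sigma-\id)$ with $\nu_j\in\Rs$; and evaluating $\mu$ at $\bar x\in G_{\bar b}$, where $\sigma x_j-x_j=b_j^{-1}\pi(\bar x)$ for $\pi(\bar x):=b_1(\sigma x_1-x_1)$, gives $\bigl(\sum_j\nu_jb_j^{-1}\bigr)\bigl(\pi(\bar x)\bigr)$. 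Because $\pi$ maps $G_{\bar b}$ onto $\R$ (here we use the surjectivity of $\sigma-\id$) and a nonzero $1$-twist does not act as the zero map on the infinite ring $\R$, this gives
\[\I(G_{\bar b})=\Bigl\{\,\textstyle\sum_j\nu_j(\sigma-\id)(x_j)\ :\ \nu_j\in\Rs,\ \sum_j\nu_jb_j^{-1}=0\ \text{in}\ \Rs\,\Bigr\}.\]
The direction ``$\{b_1^{-1},\dots,b_n^{-1}\}$ left $\FR$-dependent $\Rightarrow$ $G_{\bar b}$ not radical'' now drops out: if $\sum_jt_jb_j^{-1}=0$ with $t_j\in\FR$ not all zero, then $\ell=\sum_jt_jx_j$ does not vanish on $(\FR)^n$, so $\ell\notin\I(G_{\bar b})$, whereas $(\sigma-\id)\ell=\sum_jt_j(\sigma-\id)(x_j)\in\I(G_{\bar b})$ by the display, so $\ell\in\cl(\I(G_{\bar b}))\setminus\I(G_{\bar b})$.

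For the converse I would argue contrapositively. Suppose $G_{\bar b}$ is not radical, and choose $\ell=\sum_j\ell_j(x_j)\notin\I(G_{\bar b})$ and $\gamma\in\Rs\setminus\{0\}$ of minimal degree with $\gamma\ell\in\I(G_{\bar b})$. Since $\gamma\ell$ vanishes on $(\FR)^n$, each coefficient-sum $\bar\ell_j=\ell_j(1)$ is a root of $\gamma$; the possibility that all $\bar\ell_j$ vanish is ruled out, for it would force (by the display and the absence of zero divisors in $\Rs$) that $\ell$ itself lies in $\I(G_{\bar b})$. Hence $\gamma$ has a nonzero root $a=\bar\ell_{j_0}$; by the factorisation Fact~\ref{3.2} one peels the right factor $\sigma-\sigma(a)a^{-1}\id$ off $\gamma$, and minimality of $\deg\gamma$ --- applied to $\ell$ with this degree-$1$ factor, or to the image of $\ell$ under it --- forces $\deg\gamma=1$, so after scaling $\gamma=\sigma-\sigma(a)a^{-1}\id$. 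Right-dividing each $\ell_j$ by $\sigma-\id$ replaces it by its constant term $\bar\ell_j$, which, being a root of $\gamma$, equals $as_j$ with $s_j\in\FR$ and $s_{j_0}=1$; plugging this back into ``$\sum_j\nu_jb_j^{-1}=0$'' yields an identity $\sigma(a)\sum_js_jb_j^{-1}+\gamma Q=0$ in $\Rs$ for some $Q\in\Rs$, and comparing degrees (the first summand has degree $0$, while $\gamma Q$ has degree $\geqslant1$ unless $Q=0$, again because $\Rs$ is a domain) forces $Q=0$ and $\sum_js_jb_j^{-1}=0$, a nontrivial left $\FR$-linear relation among the $b_j^{-1}$.

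The conceptual core of the argument is the reduction to $\deg\gamma=1$ through the Euclidean factorisation of $\Rs$ (Fact~\ref{3.2}). The step I expect to be the main obstacle is precisely the bookkeeping in the noncommutative domain $\Rs$ underlying the third paragraph --- the interaction between right-divisibility by $\sigma-\id$, the degree filtration, and the absence of zero divisors --- which is needed to isolate the ``degree-$0$ part'' of the relation $\sum_j\nu_jb_j^{-1}=0$ and to see that all higher-order corrections cancel; a secondary technical point is to justify the passage to a linearly surjective extension in the first paragraph without disturbing $\FR$ or the radicality of $G_{\bar b}$.
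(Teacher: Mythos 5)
Your route matches the one indicated for this Fact, which the paper does not reprove but imports from \cite[Lemma 6.7]{Milliet2018}, remarking only that the proof uses Fact~\ref{6.3} together with \cite[Theorem 6.4]{Milliet2018}. The algebraic core of your argument is sound: a $1$-twist vanishes on $\FR$ exactly when it is a right multiple of $\sigma-\id$; \emph{granted} that $\pi\colon\bar x\mapsto b_1(\sigma x_1-x_1)$ maps $G_{\bar b}$ onto $\R$, your description $\I(G_{\bar b})=\bigl\{\sum_j\nu_j(\sigma-\id)(x_j)\colon\sum_j\nu_jb_j^{-1}=0\ \text{in}\ \Rs\bigr\}$ is correct, and the Euclidean reduction via Fact~\ref{3.2} to a degree-one $\gamma=\sigma-\sigma(a)a^{-1}\id$, whose roots are exactly $a\FR$, does produce the relation $\sum_js_jb_j^{-1}=0$ with $s_j\in\FR$ and $s_{j_0}=1$. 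The direction ``dependent $\Rightarrow$ not radical'' is complete as written and in fact needs only the easy inclusion of your display (a direct computation on $G_{\bar b}$), hence no extension at all.

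The genuine gap is the reduction you flag in your first paragraph and never carry out; it is not a routine check, and it is where the real content lies. Replacing $(\R,\sigma)$ by a linearly surjective extension $(E,\tau)$ changes both sides of the equivalence: the fixed ring may grow, so you must prove that left $\FR$-independence of $b_1^{-1},\dots,b_n^{-1}$ is equivalent to left ${\rm Fix}(\tau)$-independence (true because $\sigma$ is onto, but it requires a Casoratian/Dedekind-type linear-disjointness argument you do not give); and, more seriously, radicality is closedness of the $\Rs$-submodule $\I(G_{\bar b})\subseteq\Rn$, whereas over $E$ you compute a different ideal, in a different twist module, of the larger set $G_{\bar b}(E)$, and neither implication between ``radical over $\R$'' and ``radical over $E$'' is automatic --- this transfer is exactly what \cite[Theorem 6.4]{Milliet2018} on projections of $\sigma$-linear sets over linearly surjective division rings is invoked for. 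Nor can the extension be avoided by assuming $\sigma-\id$ surjective on $\R$ itself: over $\R$ one only has $\pi(G_{\bar b})=\bigcap_jb_j\cdot(\sigma-\id)(\R)$, which need not be all of $\R$ (in the paper's applications $\sigma=\sigma_a$, and surjectivity of $\sigma_a-\id$ is precisely the conclusion Theorems~\ref{gegen} and~\ref{T:N} are driving at), and without it your display --- on which both the exclusion of ``all $\bar\ell_j=0$'' and the final identity $\sigma(a)\sum_js_jb_j^{-1}+\gamma Q=0$ rest --- is unproved. A smaller point: ``a nonzero $1$-twist does not act as the zero map on the infinite ring $\R$'' should be justified by the infinitude of $[\R:\FR]$ (or, in a linearly surjective ring, by solvability of $\delta(x)=1$), not by infinitude of $\R$: on $\mathbf C$ with complex conjugation the nonzero twist $\sigma^2-\id$ acts as the zero map.
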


\section{A new look at the stable case}

\subsection{Stable division rings of prime characteristic}We begin by proposing an alternative proof of the stable case, that does not use the fact that iterates of $\sigma_a-\id$ are uniformly definable in characteristic~$p$ (where $\sigma_a$ is the conjugation map by~$a$). The part of the argument that mimics Scanlon's result \cite[Proposition 1]{Scanlon1999} has the advantage to be valid in any characteristic. We recall the definition of a stable structure. Given a natural number $k\in\mathbf N$ and a structure $(M,L)$, an $L$-formula $\phi(\bar x,\bar y)$ with $|\bar x|=|\bar y|=\ell$ has the \emph{$k$-order property} if there are $\ell$-tuples $\bar a_1,\dots,\bar a_{k-1}$ in $M$ such that for any $i,j<k$, \[\left(M\models\phi(\bar a_i,\bar a_j)\right)\iff i<j.\]

\begin{definition}[Shelah]A structure $(M,L)$ is \emph{stable} if for every $L$-formula $\phi(\bar x,\bar y)$, there is a natural number $k\in\mathbf N$ such that $\phi(\bar x,\bar y)$ does not have the $k$-order property.\end{definition}

The above is adapted from \cite[Definition 2.9]{Chernikov2015}. It is not the original definition \cite[Definition~2.2 p. 9]{Shelah1990}, but is equivalent to it by \cite[Theorem 2.13 p. 304]{Shelah1971} and by the Compactness Theorem. The following chain condition can be found in \cite[Proposition 1.4]{Poizat1987}. Note the similarity between Fact~\ref{BS} and Fact~\ref{SDCC}.

\begin{fact}[Stable descending chain condition]\label{SDCC}In a stable group, to any formula $\phi( x,\bar y)$ is associated a natural number $n\in\mathbf N$ such that the intersection of {\bf any} family $\{G_i\colon i\in I\}$ of subgroups defined respectively by the formulas $\{\phi(x,\bar a_i)\colon i\in I\}$ be the intersection of at most $n$ among them.\end{fact}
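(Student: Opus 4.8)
This is a classical fact (it is \cite[Proposition 1.4]{Poizat1987}); I would recover it through the local rank calculus for stable formulas. The first move is a reduction to the case where \emph{every} instance $\phi(x,\bar a)$ defines a subgroup. The set of parameters $\bar a$ for which $\phi(x,\bar a)$ defines a subgroup of $G$ is itself definable, by a formula $\chi(\bar y)$; replacing $\phi(x,\bar y)$ by $\bigl(\chi(\bar y)\wedge\phi(x,\bar y)\bigr)\vee\neg\chi(\bar y)$ produces a formula all of whose instances are subgroups --- the spurious ones being $G$ itself --- which is still stable because $G$ is a stable structure, and for which the subgroups $G_i$ of the statement are among the definable instances. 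So assume henceforth that $\phi(G,\bar a)$ is always a subgroup.

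Next I would pass to the coset formula $\psi(x;z,\bar y):=\phi(z^{-1}x,\bar y)$, for which $\psi(G;g,\bar a)=g\cdot\phi(G,\bar a)$. Two facts matter: (a) $\psi$ is stable (again because $G$ is stable), so by Shelah's theorem its local rank $R(-,\psi,2)$ takes only natural number values on nonempty definable sets, and in particular $N:=R(x=x,\psi,2)<\omega$; (b) for each $g\in G$, left translation $x\mapsto gx$ is a bijection of $G$ that carries $\psi$-definable sets to $\psi$-definable sets, since $gx\in\psi(G;c,\bar a)\iff x\in\psi(G;g^{-1}c,\bar a)$, hence it preserves $R(-,\psi,2)$. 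The crucial lemma is then: \emph{if $H$ is a definable subgroup of $G$ not contained in $\phi(G,\bar a)$, then $R\bigl(H\cap\phi(G,\bar a),\psi,2\bigr)<R(H,\psi,2)$.} To see it, set $H'=H\cap\phi(G,\bar a)\subsetneq H$, choose $g\in H\setminus H'$, and note that $gH'=H\cap\psi(G;g,\bar a)$ is a nonempty subset of $H\setminus H'$ while $H'=H\cap\psi(G;1,\bar a)$; thus $H=H'\sqcup(H\setminus H')$ is a splitting of $H$ along the $\psi$-instance $\psi(x;1,\bar a)$ into two nonempty parts, with $R(H\setminus H',\psi,2)\geq R(gH',\psi,2)=R(H',\psi,2)$ by monotonicity and (b). By the recursive clause defining the rank, $R(H,\psi,2)\geq R(H',\psi,2)+1$.

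The conclusion is a greedy argument designed never to form a non-definable infinite intersection. Given the family $\{G_i:i\in I\}$, pick $j_0,j_1,\dots\in I$ recursively: having chosen $j_0,\dots,j_k$ with $H_k=G_{j_0}\cap\cdots\cap G_{j_k}$, stop if $H_k\subseteq G_i$ for every $i\in I$ --- in which case $H_k=\bigcap_{i\in I}G_i$ --- and otherwise choose $j_{k+1}$ with $H_k\not\subseteq G_{j_{k+1}}$, so that $H_{k+1}\subsetneq H_k$ and, by the lemma, $R(H_{k+1},\psi,2)<R(H_k,\psi,2)$. As $0\leq R(H_k,\psi,2)\leq N$ throughout, there can be at most $N$ strict drops, so the process halts within $N$ steps and $\bigcap_{i\in I}G_i$ is the intersection of at most $n:=N+1$ of the $G_i$; this $n$ depends only on $\psi$, hence only on $\phi$.

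The delicate point is the rank lemma: one must convert a merely \emph{proper} inclusion of subgroups into a \emph{strict} drop of a rank already known to be finite, and what makes this work is exhibiting a coset of $H'$ inside $H\setminus H'$ and recognising it, via the translation identity in (b), as an instance of $\psi$ --- which is exactly why one enlarges $\phi$ to $\psi$. Note that stability, rather than mere $\NIP$, is used only to know that $R(x=x,\psi,2)$ is finite; this is precisely what promotes the Baldwin--Saxl-type bound of Fact~\ref{BS} from finite families to arbitrary ones.
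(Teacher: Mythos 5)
Your proof is correct: the paper states this chain condition as a Fact, citing Poizat's Proposition 1.4 rather than proving it, and your argument --- finiteness of Shelah's local $2$-rank for the coset formula $\psi(x;z,\bar y)=\phi(z^{-1}x,\bar y)$, the strict rank drop when intersecting with an instance not containing the current subgroup, and the greedy selection bounded by $R(x=x,\psi,2)$ --- is essentially the standard proof behind that citation. (The only cosmetic remark: the initial reduction making every instance of $\phi$ a subgroup is harmless but not needed, since the rank lemma is only ever applied to the instances $\phi(x,\bar a_i)$ that are assumed to define subgroups.)
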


\begin{fact}[\protect{\cite[Theorem 2.1]{Milliet2011}}]\label{stable}A stable division ring of characteristic $p$ has finite dimension over its centre.\end{fact}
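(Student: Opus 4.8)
The plan is to argue by contradiction. Suppose $\R$ is a stable division ring of characteristic $p$ with $[\R:\Z(\R)]$ infinite; in particular $\R$ is infinite, a finite division ring being a commutative field by Wedderburn. I would first extract an element $a\in\R$ transcendental over $\Z(\R)$. The centralisers $\C(a)=\{x\in\R:xa=ax\}$, for $a$ ranging over $\R^\times$, form a uniformly definable family of additive subgroups of $(\R,+)$ whose total intersection is $\Z(\R)$; by the stable chain condition (Fact~\ref{SDCC}) this intersection already equals $\C(a_1)\cap\dots\cap\C(a_N)$ for finitely many $a_i$, so by Brauer's Fact~\ref{F:B}, $[\R:\Z(\R)]\leqslant\prod_{i=1}^{N}[\R:\C(a_i)]=\prod_{i=1}^{N}[\Z(\R)(a_i):\Z(\R)]$. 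As the left-hand side is infinite, some $a:=a_i$ is transcendental over $\Z(\R)$. From now on $\sigma=\sigma_a\colon x\mapsto a^{-1}xa$ is the conjugation automorphism by $a$: it is surjective, $\FR=\C(a)$, and $[\R:\FR]_{\rm right}=[\Z(\R)(a):\Z(\R)]$ is infinite, so $(\R,\sigma_a)$ meets the standing hypotheses of Section~\ref{S:L}. Put $\wp=\sigma-\id\colon x\mapsto a^{-1}xa-x$, an additive endomorphism of $\R$ with $\ker\wp=\C(a)$.

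Mimicking Scanlon's \cite[Proposition~1]{Scanlon1999}, I claim $1\notin\wp(\R)$. Indeed, if $\wp(y)=1$, i.e. $a^{-1}ya-y=1$, then $a\cdot(-a^{-1}y)-(-a^{-1}y)\cdot a=-y+a^{-1}ya=1$, so $(a,-a^{-1}y)$ solves the metro equation $xz-zx=1$, contradicting Corollary~\ref{metro} (available because stability implies \NIP{}). Hence $\wp(\R)\subsetneq\R$. This is the step that uses the characteristic — only through Corollary~\ref{metro} — and it would otherwise be valid in any characteristic.

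It remains to contradict $\wp(\R)\subsetneq\R$ itself. The subgroups $b\wp(\R)=\{b(a^{-1}xa-x):x\in\R\}$, $b\in\R^\times$, are uniformly definable, and their total intersection is $\{0\}$: a nonzero element $e$ of it would satisfy $b^{-1}e\in\wp(\R)$ for all $b\in\R^\times$, whence $\R\setminus\{0\}=\R^\times e\subseteq\wp(\R)$, against $\wp(\R)\subsetneq\R$. By the stable chain condition (Fact~\ref{SDCC}) there is $M$ such that every finite intersection of the $b\wp(\R)$ reduces to a sub-intersection of at most $M$ of them. Since $[\R:\FR]$ is infinite also on the left (Fact~\ref{F:B}), pick $c_1,\dots,c_{M+1}\in\R^\times$ left $\FR$-linearly independent and set $b_i=c_i^{-1}$, so that $\{b_i^{-1}:i\in T\}$ is left $\FR$-independent for every nonempty $T\subseteq\{1,\dots,M+1\}$; by Fact~\ref{6.7} each $G_{(b_i)_{i\in T}}$ is radical. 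Using the Zariski dimension theory of Section~\ref{S:L} — Rank-Nullity (Fact~\ref{5.8}) applied to the $\sigma$-morphism $G_{(b_i)_{i\in T}}\to\R$, $\bar x\mapsto b_{\min T}\wp(x_{\min T})$, whose image is $\bigcap_{i\in T}b_i\wp(\R)$ and whose kernel is $\sigma$-isomorphic to $\C(a)^{|T|}$ of Zariski dimension $0$, together with Facts~\ref{5.4}, \ref{0}, \ref{00} and the fact that a subset of $\R$ has Zariski dimension at most $1$ — one shows that each $\bigcap_{i\in T}b_i\wp(\R)$ is a radical $\sigma$-linear set of Zariski dimension $1$. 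But a radical $\sigma$-linear set of dimension $1$ contained in another one of dimension $1$ must coincide with it (their vanishing ideals have the same closure, hence are equal), so all the $\bigcap_{i\in T}b_i\wp(\R)$ are equal; by symmetry in the indices, $b\wp(\R)=b'\wp(\R)$ whenever $\{b^{-1},b'^{-1}\}$ is left $\FR$-independent, and a short manipulation upgrades this to $c\wp(\R)=\wp(\R)$ for every $c\in\R^\times$. Thus $\wp(\R)$ is a left ideal of $\R$, hence $\{0\}$ or $\R$; as $\wp(\R)\neq\R$, we get $\wp(\R)=\{0\}$, i.e. $\sigma_a=\id$, i.e. $a\in\Z(\R)$ — contradicting the transcendence of $a$.

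The main obstacle I foresee is the dimension computation in the third paragraph. Rank-Nullity (Fact~\ref{5.8}), and even the assertion that sets like $\wp(\R)$ or $\bigcap_{i\in T}b_i\wp(\R)$ are $\sigma$-linear, are guaranteed only over linearly surjective difference division rings — and $(\R,\sigma_a)$ need not be one. I would pass via Fact~\ref{6.3} to a linearly surjective difference extension $(E,\tau)$ and run the dimension bookkeeping there, the subtle points being that the fixed ring ${\rm Fix}(\tau)$ may strictly contain $\FR$, so that the left $\FR$-independence of the $b_i^{-1}$ — on which Fact~\ref{6.7} depends — need not persist over $E$, and that the equalities of $\sigma$-linear sets established over $E$ must be shown to descend to $\R$. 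Making this interplay work, with the stable chain condition as the device that keeps the relevant families of subgroups finite and controllable, is the heart of the matter.
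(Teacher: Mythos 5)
Your first two steps are sound and agree with the paper: reduce to showing $[\R:\C(a)]$ is finite for every $a$ (though your inequality $[\R:\Z(\R)]\leqslant\prod_i[\R:\C(a_i)]$ for an intersection of centralisers is not justified as stated; the paper's reduction instead intersects centralisers of elements chosen successively inside the previous intersection, applying Fact~\ref{F:B} in the smaller ring), and observe that a solution of $(\sigma_a-\id)(y)=1$ solves the metro equation, which Corollary~\ref{metro} forbids. The genuine gap is your third paragraph, and it is fatal rather than bookkeeping. Over $(\R,\sigma_a)$ itself the sets $b\gamma(\R)$ and their finite intersections are not $\sigma$-linear: Fact~\ref{5.8} requires linear surjectivity, and your own claims refute themselves, since a $\sigma$-linear subset $V\subseteq\R$ of Zariski dimension $1$ has $\I(V)=\{0\}$, hence $V=\V(\I(V))=\R$; so if $\bigcap_{i\in T}b_i\gamma(\R)$ were radical $\sigma$-linear of dimension $1$ you would get $\gamma(\R)=\R$ outright, contradicting your second paragraph. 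Your proposed repair via Fact~\ref{6.3} cannot work either: in a linearly surjective extension $(E,\tau)$ the equation $c^{-1}(\tau x- x)=1$ is solvable for every $c\in E^\times$, so $\tau-\id$ is onto $E$, all the sets $b(\tau-\id)(E)$ equal $E$, and no nontrivial equality descends to $\R$. Note also that the chain-condition bound $M$ you introduce is never actually used afterwards.

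The missing idea is the paper's use of the \emph{stable} chain condition on the \emph{infinite} family $\{b\gamma(\R)\colon b\in\R^\times\}$: the total intersection $I=\bigcap_{b\in\R^\times}b\gamma(\R)$ is a left ideal by construction, and Fact~\ref{SDCC} (unlike the \NIP{} version) makes it equal to a finite subintersection $\bigcap_{i\leqslant n}b_i\gamma(\R)$ for some tuple $\bar b$, with no independence or radicality requirement on the $b_i$. That this finite intersection is nonzero is then seen pointwise on the honestly $\sigma_a$-linear group $G_{\bar b}$: it has dimension at least $1$ by Fact~\ref{5.4}, hence infinite right $\FR$-dimension by Facts~\ref{0} and~\ref{00}, hence contains a tuple with $x_1\notin\ker\gamma$, and $b_1\gamma(x_1)$ is a nonzero element of $I$; so $I=\R$ and $\gamma$ is onto, against Corollary~\ref{metro}. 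At no point must $\gamma(\R)$ itself be $\sigma$-linear. Your route of proving directly that $\gamma(\R)$ is a left ideal is essentially what the paper is forced to do in the \NIP{} case (Theorem~\ref{gegen}), and there it needs the heavier machinery of radicality, Fact~\ref{3.2} and Fact~\ref{6.3}; in the stable case, intersecting over all of $\R^\times$ is precisely what lets one skip that work.
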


\begin{proof}It suffices to show that for every such division ring $\R$ and $a\in\R$, the dimension $[\R:\C(a)]$ is finite (by the stable descending chain condition~\ref{SDCC} applied to centralisers, this will imply that $\R$ has finite dimension over a commutative subfield, hence over its centre). Let us assume for a contradiction that $[\R:\C(a)]$ is infinite for some $a\in\R$. Let $\sigma_a$ be the conjugation map by $a$ and $\gamma=\sigma_a-\id$. We shall show that $\gamma\colon\R\rightarrow \R$ is onto, a contradiction with Corollary~\ref{metro}. We adapt the proof of \cite[Proposition 1]{Scanlon1999}. By the stable descending chain condition~\ref{SDCC}, there are a natural number $n\in\mathbf N$ and an $n$-tuple $\bar b=(b_1,\dots,b_n)$ of elements in $\R^\times$ such that \[I=\bigcap_{b\in\R^\times} b\cdot  \gamma(\R)=\bigcap_{b\in \bar b}b\cdot\gamma(\R).\] Let $G_{\bar b}$ the $\sigma_a$-linear set defined by \[G_{\bar b}=\left\{(x_1,\dots,x_n)\in \R^{n}\colon b_1\cdot\gamma(x_1)=b_i\cdot\gamma(x_i)\text{ for all } 1\leqslant i\leqslant n\right\}.\] This is an intersection of $n-1$ many $\sigma_a$-hypersurfaces of $\R^{n}$, so $\dim G_{(b_1,\dots,b_n)}\geqslant 1$ by Fact~\ref{5.4}. By Fact~\ref{0} and Fact~\ref{00}, the group $G_{\bar b}$ has infinite right $\FR$-dimension, so $I$ contains a nonzero element. Since $I$ is a left ideal of $\R$, one must have $I=\R$, hence $\gamma$ is onto, as desired.\end{proof}

\begin{remarque}\label{rk}Separably closed fields are currently the only known examples of infinite stable fields \cite[Theorem 3]{Wood1979}. From the conjecture \cite[p. 1]{Chatzidakis1997} \emph{every infinite stable field is separably closed}, follows \emph{every stable division ring of characteristic $p$ is a field}. For if $\R$ is a stable division ring of characteristic $p$ that is not a field, then~$\R$ has finite dimension over its centre by Fact~\ref{stable}. Pick some $a\in \R\setminus \Z(\R)$. By Corollary~\ref{metro}, the equation \mbox{$ax-xa=1$} has no solution, so that the extension $\Z(\R)(a)/\Z(\R)$ is  separable by Fact~\ref{LamB}. We do not know whether the reverse implication is true.
\end{remarque}

\begin{remarque}Bounded PAC fields are currently the only known examples of infinite simple fields \cite[Corollary 4.8]{CP1998}, and from the conjecture \emph{every infinite simple field is PAC}, follows \emph{every simple division ring of characteristic $p$ is a field}, since on the one hand, such a division ring must have finite dimension over its centre by \cite[Theorem 3.5]{Milliet2011}, and on the other hand its centre has a trivial Brauer group by \cite[Theorem 11.6.4]{FJ2008}. Also, since the iterated kernels of $\sigma_a-\id$ are uniformly definable in characteristic $p$, the map $\sigma_a-\id$ is not onto in an NSOP division ring of characteristic~$p$. In the proof of Fact~\ref{stable}, the stable chain condition is applied to uniformly definable vector spaces over an infinite division ring, so the argument remains valid for a simple division ring of characteristic~$p$, using the simple descending chain condition \cite[Theorem 4.2.12]{Wagner2000}.\end{remarque}

\subsection{Stable and simple difference fields}Let us point out consequences that concern stable or simple difference fields. It is noticed in \cite[Lemma 2.11]{CH2014} that any model of ACFA is linearly surjective. Recall that ACFA is supersimple \cite{CH1999} and that a supersimple difference field is inversive (follows from \cite{PP1995} or \cite[Fact 4.2.(ii)]{PSW1998}). With a proof similar as the one of Fact~\ref{stable}, and arguing as in the proof of \cite[Theorem 3.2]{KSW2011}, one can withdraw the uniform definability assumption in \cite[Proposition 3.6]{Milliet2011}:

\begin{theoreme}\label{T:SW}If $(K,\sigma)$ is a difference field with a simple theory and $k=\bigcap\sigma^n({K})$, then\begin{itemize}\item either $[K:\FR]$ is finite,\item or $\FR$ is finite, and the index $|K/\delta(K)|$ is finite for every $\delta\in k[\sigma]$ of valuation zero,\item or every $\delta\in k[\sigma]$ of valuation zero is surjective.\end{itemize}\end{theoreme}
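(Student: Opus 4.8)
The plan is to adapt the proof of Fact~\ref{stable}, with an arbitrary valuation-zero $1$-twist $\delta\in k[\sigma]$ in place of $\gamma=\sigma_a-\id$ and the ideal structure of the field $K$ in place of that of a division ring; what is missing here is the metro-equation contradiction of Corollary~\ref{metro}, and this is exactly why the conclusion splits into three alternatives. If $[K:\FR]$ is finite the first alternative holds, so assume $[K:\FR]$ is infinite. The cleanest situation is when $\sigma$ is onto (so $k=K$), and I describe the argument there; in general one first passes to the core $k=\bigcap_n\sigma^n(K)$, on which $\sigma$ restricts to an automorphism and which contains $\FR$ (the sub-case $[k:\FR]<\infty$, which makes $\sigma$ of finite order on $k$, being settled by a direct computation), and runs the machinery of \cite{Milliet2018} over $(k,\sigma)$.

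Fix a nonzero $\delta$ of valuation zero and degree $d$. First, its solution set $\V(\delta)=\{x:\delta(x)=0\}$ is a left $\FR$-subspace of dimension $\leqslant d$ (a difference-Casoratian argument, $\sigma$ being injective and $\delta$ having nonzero leading coefficient) and, as a $\sigma$-linear set, has Zariski dimension $0$ (it lies in the zero set of the single nonzero twist $\delta$, so $\dim\V(\delta)\leqslant\dim K-1=0$, and $\geqslant 0$ by Fact~\ref{5.4}); hence $\V(\delta)$ is finite precisely when $\FR$ is finite. Second, $\delta$ is additive and $\FR$-linear, so each translate $b\,\delta(K)$, $b\in K^\times$, is an $\FR$-subspace of $(K,+)$, and $\{b\,\delta(K):b\in K^\times\}$ is uniformly definable.

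Now I follow Fact~\ref{stable}: apply the descending chain condition for simple groups \cite[Theorem 4.2.12]{Wagner2000} to this family and put $I=\bigcap_{b\in K^\times}b\,\delta(K)$. Since $z\in I\iff Kz\subseteq\delta(K)$, $I$ is an ideal of $K$, hence $I=0$ or $I=K$, and $I=K$ says precisely that $\delta$ is onto. If $\FR$ is infinite, every $\FR$-subspace $b\,\delta(K)$ of finite index in $(K,+)$ equals $(K,+)$, so Wagner's condition upgrades to the full chain condition of Fact~\ref{stable}: $I=\bigcap_{i\leqslant n}b_i\,\delta(K)$ for some $b_1,\dots,b_n\in K^\times$. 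Form the $\sigma$-linear set
\[G_{\bar b}=\{(x_1,\dots,x_n)\in K^n:b_1\delta(x_1)=b_i\delta(x_i)\text{ for }1\leqslant i\leqslant n\},\]
an intersection of $n-1$ many $\sigma$-hypersurfaces of $K^n$, so $\dim G_{\bar b}\geqslant1$ by Fact~\ref{5.4} and $G_{\bar b}$ is infinite by Facts~\ref{0} and~\ref{00}. The common-value map $\nu\colon(x_1,\dots,x_n)\mapsto b_1\delta(x_1)$ is a $\sigma$-morphism with $\nu(G_{\bar b})\subseteq\bigcap_{i\leqslant n}b_i\,\delta(K)=I$ and kernel $\V(\delta)^n$ of Zariski dimension $0$; as $\dim G_{\bar b}\geqslant1>\dim\ker\nu$, some point of $G_{\bar b}$ lies off $\ker\nu$, so $I$ contains a nonzero element, whence $I=K$ and $\delta$ is onto --- the third alternative.

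When $\FR$ is finite the translates $b\,\delta(K)$ are finite-dimensional over a finite field, so Wagner's condition no longer forces equality of finite-index members and the above only yields that some finite intersection $\bigcap_{i\leqslant n}b_i\,\delta(K)$ is infinite, which is not enough to make $I$ nonzero. Instead I would use Fact~\ref{3.2} to factor $\delta$, over a linearly surjective difference extension (Fact~\ref{6.3}), into degree-one twists $\sigma-\beta\id$, observe that each is, after the substitution $x\mapsto x/u$ with $\sigma(u)=\beta u$, the $\sigma$-Artin-Schreier map $x\mapsto\sigma(x)-x$, and invoke the simple-field Artin-Schreier bound (Wagner's \cite[Theorem 3.2]{KSW2011}, argued along the lines indicated in the text) to get $|K/\delta(K)|<\infty$, the second alternative. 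The main obstacle is the point where the NIP proof of Fact~\ref{stable} got a contradiction from the metro equation: here one must instead settle for the finite-index statement, and one must also reconcile the model theory with the algebra, since the chain condition lives in $(K,\sigma)$ whereas the Zariski dimension machinery producing $G_{\bar b}$ needs $\sigma$ onto --- true over the core $k$ (or a linearly surjective extension) but not a priori over $K$ --- so one has to transport ``$\delta$ onto $k$'' to ``$\delta$ onto $K$'' in the third case and to control $[K:\delta(K)]$ from $[k:\delta(k)]$ in the second.
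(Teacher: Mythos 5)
Your plan for the inversive, $\FR$-infinite situation ($k=K$) is essentially the paper's: the uniformly definable family $b\,\delta(K)$ of $\FR$-subspaces, the upgrade of Wagner's commensurability chain condition to an actual equality because a finite-index $\FR$-subspace over an infinite $\FR$ is everything, the group $G_{\bar b}$ of Zariski dimension $\geqslant 1$ with value map into $I=\bigcap b\,\delta(K)$, and the ideal argument forcing $I=K$. But the two points you flag at the end and leave unresolved are precisely where the theorem's content lies, and your proposed substitutes do not work. For the passage from $k$ to $K$, the paper does not ``transport surjectivity''; it proves, for any valuation-zero $\delta\in k[\sigma]$, that $K=\sigma(K)+\delta(K)$ directly from the shape of $\delta$ (solve for the $a_0$-term), bootstraps this by induction to $K=\sigma^n(K)+\delta(K)$ for every $n$ (rewriting the coefficients as $a_i=\sigma^i(b_i)$ and using $\sigma\gamma=\delta\sigma$ for a shifted twist $\gamma$), and concludes $K=k+\delta(K)$ by compactness in an $\aleph_0$-saturated model. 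This identity is what converts information obtained over $k$ into $|K/\delta(K)|=|k/k\cap\delta(K)|$, and nothing in your sketch produces it; without it neither your second nor your third alternative follows from what you establish over $k$. Relatedly, your parenthetical subcase $[k:\FR]<\infty$ ``settled by a direct computation'' is not supplied and is the wrong move: the paper shows by a transcendence-plus-saturation argument that when $[K:\FR]$ is infinite there is an element of $k$ transcendental over $\FR$, so $[k:k\cap\FR]$ is infinite and the machinery of Section~\ref{S:L} applies over $(k,\sigma)$ without any such subcase.

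The treatment you propose when $\FR$ is finite is not viable. Wagner's result (\cite[Theorem 3.2]{KSW2011}) concerns the Artin--Schreier map $x\mapsto x^p-x$ of a simple field of characteristic $p$, not the twisted operators $\sigma-\beta\,\id$ of an arbitrary difference field, and Theorem~\ref{T:SW} is meant to hold in every characteristic; moreover you factor $\delta$ over a linearly surjective extension, where every nonzero twist is surjective anyway, so the factorisation yields no control on the index $|K/\delta(K)|$ inside $K$. The paper needs no case split on $\FR$ at this stage: it applies \cite[Fact 3.1]{KSW2011} to the $k^\times$-invariant family $\left\{a\,\delta(K)\colon a\in k^\times\right\}$ to get a $k^\times$-invariant subgroup $N$ containing a finite subintersection and of finite index over every member of the family; the dimension argument over $k$ (Facts~\ref{5.4}, \ref{0}, \ref{00}) shows $N\cap k\neq 0$, hence $N\cap k$ is a nonzero ideal of $k$ and $k\subseteq N$, and then $K=k+\delta(K)$ gives $|K/\delta(K)|\leqslant|N/N\cap\delta(K)|<\infty$. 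The trichotomy only appears afterwards: when $\FR$ is infinite the finite-index $\FR$-subspace $\delta(K)$ must equal $K$, which is how your third alternative is actually obtained. So the architecture of your proposal is right in the special case, but the general (non-inversive) case and the $\FR$-finite case both contain genuine gaps.
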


The first case occurs when $K/F$ is Galois over a simple field $F$, and $\sigma$ a nontrivial element of ${\rm Gal}(K/F)$. The second case occurs \emph{e.g.} when $K$ is a pseudo-finite field of characteristic~$p$ with the Frobenius (by \cite[Lemma 4.5]{Hrushovski2002} or \cite[Proposition 4.5]{CDM1992}), in which case the index $|K/\delta(K)|$ is bounded by $|\ker\delta|$ by \L os Theorem, and maybe greater than one, \emph{e.g.} if $\delta$ is the Artin-Schreier map. The assumption on the valuation cannot be dropped as witnessed by an unperfect separably closed field. Since an infinite stable field has no proper definable additive subgroup of finite index, from Theorem~\ref{T:SW}, one recovers Scanlon's \cite[Proposition 1]{Scanlon1999}. Note that if $(K,\sigma)$ is inversive and $\FR$ infinite in Theorem~\ref{T:SW}, then $(K,\sigma)$ is linearly surjective.

\begin{proof}[Proof of Theorem~\ref{T:SW}]We may assume that $K$ is infinite, $\aleph_0$-saturated, that $[K:\FR]$ is infinite and that $\sigma$ is injective. By the Compactness Theorem and saturation hypothesis, there is a transcendental element $x$ over $\FR$. For all $n\in\mathbf N$, the element $\sigma^n(x)$ is also transcendental over $\FR$. By the Compactness Theorem, there is an element in $k$ that is transcendental over $\FR$, so the dimension $[k:k\cap\FR]$ is infinite. Let $\delta\in k[\sigma]$ be of valuation zero. We shall show that $\delta(K)$ has finite additive index in $K$. Let $\mathcal G=\left\{a\cdot\delta(K)\colon a\in k^{\times}\right\}$, a $k^\times$-invariant family. By \cite[Fact~3.1]{KSW2011}, there is a $k^\times$-invariant additive subgroup $N\leqslant K$ containing a finite intersection of groups in $\mathcal G$, say $\bigcap_{a\in\bar a}a\cdot\delta(K)$ for some finite $n$-tuple $\bar a=(a_1,\dots,a_n)$ of elements in $k^\times$, such that the additive index $|N/N\cap G|$ is finite for all $G\in\mathcal G$. Define the $\sigma$-linear group $G_{\bar a}(k)$ by \[G_{\bar a}(k)=\left\{(x_1,\dots,x_n)\in k^n\colon a_1\cdot\delta(x_1)=a_i\cdot\delta(x_i)\text{ \rm for all }a_i\in\bar a\right\}.\] The difference field $(k,\sigma)$ is inversive and $[k:k\cap\FR]$ infinite. As $G_{\bar a}(k)$ is the intersection of $n-1$ many $\sigma$-hypersurfaces, it has Zariski dimension at least $1$ by Fact~\ref{5.4}, and in fact precisely $1$ inductively on $n$ using Fact~\ref{5.8}. The group $G_{\bar a}^0(k)$ is $\sigma$-isomorphic to $(k,+)$ by Facts~\ref{0} and~\ref{00}. So $G_{\bar a}(k)$ has infinite $k\cap\FR$-dimension, and $\bigcap_{a\in\bar a}a\cdot\delta(k)$ is nonzero, so $N\cap k$ is nonzero as well. But $N\cap k$ is $k^\times$-invariant, hence an ideal of $k$, and must equal $k$, so that $k/k\cap\delta(K)$ embeds in $N/N\cap\delta(K)$. Now putting $\delta=a_n\sigma^n+\dots+a_0\id$ with $(a_0,\dots,a_n)$ in $k$ and $a_0\neq 0$, one has for all $a\in K$ the equality \[a=a_0^{-1}a_n\sigma^n(a)+\dots+a_0^{-1}a_1\sigma(a)+\delta(-a),\quad\text{ whence }\quad K=\sigma(K)+\delta(K).\] We show inductively on $n\in\mathbf N$ that  $K=\sigma^n(K)+\delta(K)$ holds for every $\delta\in k[\sigma]$ having valuation zero. If $K=\sigma^n(K)+\delta(K)$ holds for ever  such $\delta$, let $\delta=a_n\sigma^n+\dots+a_0\id$ in $k[\sigma]$ with $a_i=\sigma^i(b_i)$ and $b_i\in k$ (obtained by the Compactness Theorem). By induction hypothesis applied to $$\gamma=\sigma^{n-1}(b_n)\sigma^n+\dots+b_1\sigma+\sigma^{-1}(a_0)\id,$$ one has $K=\sigma^n(K)+\gamma(K)$, whence $\sigma(K)=\sigma^{n+1}(K)+\sigma\gamma(K)$. But $\sigma\gamma=\delta\sigma$, so \[K=\sigma(K)+\delta(K)=\sigma^{n+1}(K)+\delta\sigma(K)+\delta(K)=\sigma^{n+1}(K)+\delta(K).\] By the Compactness Theorem, one has $K=k+\delta(K)$,  and thus \[|K/\delta(K)|=|k/k\cap\delta(K)|\leqslant |N/N\cap\delta(K)|,\] and $|K/\delta(K)|$ is finite, as claimed.\end{proof}

It is shown in \cite[Proposition 3]{Hrushovski1989}, using \cite[Theorem 1]{CS1980}, that if $(K,\sigma)$ is a superstable difference field, then either $\sigma$ is trivial, or $\FR$ is finite. As a consequence of Theorem~\ref{T:SW}, one has:

\begin{corollaire}\label{C:SW}If $(K,\sigma)$ is a stable difference field of characteristic $p$, then either $[K:\FR]$ or $\FR$ is finite.
\end{corollaire}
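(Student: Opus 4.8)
The plan is to derive the statement from Theorem~\ref{T:SW}. A stable theory is simple, so Theorem~\ref{T:SW} applies to $(K,\sigma)$; put $k=\bigcap\sigma^n(K)$. If $[K:\FR]$ is finite, or if $\FR$ is finite, we are done; so I would assume for a contradiction that both $[K:\FR]$ and $\FR$ are infinite. Then the first alternative of Theorem~\ref{T:SW} fails, and its second alternative fails because it demands $\FR$ finite, leaving the third: every $\delta\in k[\sigma]$ of valuation zero is surjective on $K$. The prime field $\mathbf F_p$ is fixed pointwise by $\sigma$, hence $\mathbf F_p\subseteq\sigma^n(K)$ for every $n$, so $\mathbf F_p\subseteq k$; therefore $\gamma:=\sigma-\id$ lies in $k[\sigma]$ and has valuation zero, and we conclude that $\gamma\colon K\to K$ is onto, with $\ker\gamma=\FR$.

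Next I would use the characteristic to promote surjectivity of $\gamma$ into a forbidden extension of $\FR$. Choose $y\in\FR\setminus\{0\}$ and, by surjectivity, $x\in K$ with $\gamma(x)=y$; then $\gamma^2(x)=\gamma(y)=\sigma(y)-y=0$ although $\gamma(x)\neq0$, so $\ker\gamma^2\supsetneq\ker\gamma=\FR$. Since $\char K=p$ one has $\gamma^p=(\sigma-\id)^p=\sigma^p-\id$, whence $\mathrm{Fix}(\sigma^p)=\ker\gamma^p\supseteq\ker\gamma^2\supsetneq\FR$. Thus $L:=\mathrm{Fix}(\sigma^p)$ is a $\sigma$-invariant subfield of $K$ properly containing $\FR$, on which $\sigma$ restricts to an automorphism of order exactly $p$ with fixed field $\FR$; by Artin's theorem $L/\FR$ is a cyclic Galois extension of degree $p$, that is, an Artin-Schreier extension of $\FR$. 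But $\FR$, being an infinite field definable in the stable --- hence NIP --- structure $(K,\sigma)$, is itself an infinite NIP field, so it has no Artin-Schreier extension by Fact~\ref{KSW}. This contradiction finishes the argument.

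The point I expect to matter most is resisting the naive approach of iterating $\gamma$ and applying the stable descending chain condition~\ref{SDCC} to the kernels $\ker\gamma^n$: these are not uniformly definable, since the formula ``$\sigma^n(x)=x$'' has length growing with $n$, so Fact~\ref{SDCC} does not bite. The remedy is to iterate only once --- from $\ker\gamma\subsetneq\ker\gamma^2$ one already reaches $\ker\gamma^p=\mathrm{Fix}(\sigma^p)$ via the identity $(\sigma-\id)^p=\sigma^p-\id$ --- and to replace the chain-condition argument by the Artin-Schreier obstruction of Fact~\ref{KSW}. The remaining verifications (that the hypotheses of Theorem~\ref{T:SW} hold, that $\sigma-\id$ falls under its third case, and that a degree-$p$ cyclic extension in characteristic $p$ is Artin-Schreier) are routine.
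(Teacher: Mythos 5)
Your proof is correct and follows essentially the same route as the paper: assume both $[K:\FR]$ and $\FR$ infinite, invoke the third alternative of Theorem~\ref{T:SW} to make $\sigma-\id$ surjective, and then contradict the Artin--Schreier closedness of the infinite stable (hence NIP) field $\FR$ given by Fact~\ref{KSW}. The only difference is the endgame: the paper takes $x$ with $\sigma(x)-x=1$, observes $x^p-x\in\FR$ and concludes $x\in\FR$ directly, whereas you package the same characteristic-$p$ computation into the identity $(\sigma-\id)^p=\sigma^p-\id$ and exhibit ${\rm Fix}(\sigma^p)$ as a degree-$p$ cyclic (hence Artin--Schreier) extension of $\FR$ via Artin's theorem --- a slightly heavier but equally valid way to reach the same contradiction.
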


\begin{proof}If both $[K:\FR]$ and $\FR$ are infinite, by Theorem~\ref{T:SW}, there is $x\in K$ such that $\sigma(x)-x=1$, from which follows $\sigma(x^p-x)=x^p-x$. But $\FR$ is Artin-Schreier closed, so $x\in\FR$, a contradiction.\end{proof}

The conclusion of Corollary~\ref{C:SW} fails for a simple field of characteristic $p$, as witnessed by ACFA. The analogous statement valid in all characteristics seems to be the following.

\begin{proposition}Let $(K,\sigma,\tau)$ be a stable field structure with commuting ring morphisms $\tau$ and $\sigma$, such that ${\rm Fix}(\tau)\subset\FR$. Then either $[K:\FR]$ or $[\FR:{\rm Fix}(\tau)]$ is finite.\end{proposition}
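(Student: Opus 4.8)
The statement is the two-morphism analogue of Corollary~\ref{C:SW}, so the plan is to run the same argument but replacing the single morphism $\sigma$ by the pair, and the Artin--Schreier map by the relevant additive map built from $\tau$ and $\sigma$. First I would set up the ambient objects: assume $[K:\FR]$ is infinite and $[\FR:{\rm Fix}(\tau)]$ is infinite, pass to an $\aleph_0$-saturated elementary extension, and arrange that $\sigma$ and $\tau$ are injective (if $\sigma$ is not injective on $K$, one replaces $K$ by $\bigcap_n\sigma^n(K)$, which is $\tau$-invariant since $\tau\sigma=\sigma\tau$; similarly for $\tau$, using that the relevant fixed and image fields persist by compactness, exactly as in the proof of Theorem~\ref{T:SW}). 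The goal is to produce an element $x\in K$ with $\sigma(x)-x=1$; then $\sigma(x^p-x)=(x+1)^p-(x+1)+\,$\dots$\,=x^p-x$, so $x^p-x\in\FR$. Now I want to run the argument a second time inside the field $\FR$ with the morphism $\tau$: since $\tau$ and $\sigma$ commute, $\tau$ restricts to a ring morphism of $\FR$ with fixed field ${\rm Fix}(\tau)\cap\FR={\rm Fix}(\tau)$ (using the hypothesis ${\rm Fix}(\tau)\subset\FR$), and $[\FR:{\rm Fix}(\tau)]$ is infinite, so the same machinery yields $y\in\FR$ with $\tau(y)-y=1$, whence $y^p-y\in{\rm Fix}(\tau)$. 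Combining, one shows $\FR$ is Artin--Schreier closed over itself in a way that contradicts the existence of $x$ with $\sigma(x)-x=1$ unless already $x\in\FR$.

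The core technical step, in both applications, is the one from the proof of Theorem~\ref{T:SW}: given a ring morphism $\rho$ of a stable field $L$ with infinite $[L:{\rm Fix}(\rho)]$, and given $\delta\in L[\rho]$ of valuation zero, the additive index $|L/\delta(L)|$ is finite. The plan is to quote this verbatim. Concretely I would apply it with $L=K$, $\rho=\sigma$, $\delta=\sigma-\id$: the stable descending chain condition~\ref{SDCC} gives a finite tuple $\bar a=(a_1,\dots,a_n)$ in $K^\times$ and a $k^\times$-invariant additive subgroup $N\leqslant K$ (where $k=\bigcap_n\sigma^n(K)$, which one first checks has infinite dimension over $k\cap\FR$, by the same transcendence-plus-compactness argument) with $\bigcap_{a\in\bar a}a\cdot\delta(K)\subseteq N$ and $|N/N\cap G|$ finite for all $G$ in the family $\{a\cdot\delta(K):a\in k^\times\}$. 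The $\sigma$-linear group $G_{\bar a}(k)=\{\bar x\in k^n:a_1\delta(x_1)=a_i\delta(x_i)\}$ is an intersection of $n-1$ $\sigma$-hypersurfaces, so $\dim G_{\bar a}(k)\geqslant1$ by Fact~\ref{5.4}; by Facts~\ref{0} and~\ref{00} its radical component is $\sigma$-isomorphic to $(k,+)$, hence of infinite $k\cap\FR$-dimension, so $\bigcap_{a\in\bar a}a\cdot\delta(k)\neq0$, hence $N\cap k\neq0$, hence (being an ideal of $k$) $N\cap k=k$; the propagation $K=\sigma^n(K)+\delta(K)$ for all $n$ then gives $K=k+\delta(K)$ and $|K/\delta(K)|=|k/k\cap\delta(K)|\leqslant|N/N\cap\delta(K)|<\infty$. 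Since $\delta=\sigma-\id$ has valuation zero, $\sigma-\id$ has finite-index image in $(K,+)$; but $(K,+)$ is an infinite stable group, hence has no proper definable additive subgroup of finite index, so $\sigma-\id$ is onto and the desired $x$ exists.

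I expect the main obstacle to be bookkeeping rather than a new idea: one must be careful that \emph{both} reductions to injectivity (for $\sigma$ on $K$, and then for $\tau$ on $\FR$) are compatible, i.e.\ that after replacing $K$ by $\bigcap_n\sigma^n(K)$ the morphism $\tau$ still restricts there (this uses commutativity crucially) and still has infinite-codimension fixed field, and symmetrically; and one must verify that the hypothesis ${\rm Fix}(\tau)\subseteq\FR$ survives these reductions so that the second application of the Theorem~\ref{T:SW}-style argument is legitimate inside $\FR$. A secondary point requiring care is that the stability chain condition~\ref{SDCC} and the absence of proper finite-index definable additive subgroups are applied in $\FR$ and in $K$ respectively, both of which are stable (as definable sets in the stable structure $(K,\sigma,\tau)$, or stable as pure fields since $\FR$ is interpretable); stability passes to interpretable structures, so this is fine, but it should be stated. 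Once these compatibility checks are in place the contradiction is immediate: from $x$ with $\sigma(x)-x=1$ we get $x^p-x\in\FR$, and the Artin--Schreier-closedness of $\FR$ forced by the second application makes $x\in\FR$, contradicting $\sigma(x)=x+1\neq x$.
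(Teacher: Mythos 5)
There is a genuine gap, and it lies exactly at the step you leave to ``combining''. The first half of your plan is sound and agrees with the paper: assuming both $[K:\FR]$ and $[\FR:{\rm Fix}(\tau)]$ infinite, the Theorem~\ref{T:SW}-style argument (chain condition~\ref{SDCC}, Facts~\ref{5.4}, \ref{0}, \ref{00}, plus the fact that an infinite stable field has no proper definable additive subgroup of finite index) does give surjectivity of $\sigma-\id$ on $K$, and the same applied to the stable difference field $(\FR,\tau)$ (whose fixed field is ${\rm Fix}(\tau)$ by the hypothesis ${\rm Fix}(\tau)\subset\FR$) gives surjectivity of $\tau-\id$ on $\FR$. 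But your route to the contradiction goes through $x^p-x$ and Artin--Schreier closure of $\FR$, which only makes sense in characteristic $p$. The proposition carries no characteristic assumption; the paper introduces it precisely as the statement ``valid in all characteristics'' replacing Corollary~\ref{C:SW}, so in characteristic $0$ your argument simply does not apply. Moreover, even in characteristic $p$ the sketch never actually uses $\tau$: once you know $x^p-x\in\FR$ and $\FR$ is infinite (which follows from $[\FR:{\rm Fix}(\tau)]$ infinite) and Artin--Schreier closed, you get $x\in\FR$ and you are just redoing Corollary~\ref{C:SW}; the element $y\in\FR$ with $\tau(y)-y=1$ plays no role, and no argument is given for how ``$\FR$ is Artin--Schreier closed over itself'' interacts with ${\rm Fix}(\tau)\subset\FR$ to finish.

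The missing idea is a short, characteristic-free correction trick. From $\sigma x-x=1$ and $\sigma\tau=\tau\sigma$ one computes $\sigma(\tau x-x)=\tau(x+1)-(x+1)=\tau x-x$, so $\tau x-x\in\FR$. Now use surjectivity of $\tau-\id$ on $\FR$ (your second application, used differently): pick $a\in\FR$ with $\tau a-a=\tau x-x$, and set $y=x-a$. Then $\sigma y-y=\sigma x-x=1$ (as $\sigma a=a$), while $\tau y=y$, so $y\in{\rm Fix}(\tau)\subset\FR$ and hence $\sigma y=y$, contradicting $\sigma y-y=1$. This replaces the Artin--Schreier step of Corollary~\ref{C:SW} entirely and is what makes the statement uniform in the characteristic; your reductions and stability bookkeeping can stay as they are, but without this step the proof does not close.
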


\begin{proof}Assume for a contradiction that both $[K:\FR]$ and $[\FR:{\rm Fix}(\tau)]$ are infinite. Then, by Theorem~\ref{T:SW} applied to $(K,\sigma)$, there is an $x$ such that $\sigma x-x=1$, from which follows $\sigma(\tau x-x)=\tau x-x$. So $\tau x-x$ belongs to $\FR$; but $(\FR,\tau)$ is a stable difference field with infinite $[\FR:{\rm Fix}(\tau)]$, so by Theorem~\ref{T:SW}, there is $a\in\FR$ with $\tau x-x=\tau a-a$. Putting $y=x-a$, one has $\sigma y-y=1$, and also $\tau y=y$, and so $\sigma y=y$ by assumption, a contradiction.\end{proof}

\section{The NIP case}

\begin{theoreme}\label{gegen}An \NIP{} division ring of characteristic $p$ has finite dimension over its centre.\end{theoreme}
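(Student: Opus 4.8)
The plan is to follow the proof of Fact~\ref{stable} step by step, substituting the Baldwin--Saxl condition~\ref{BS} for the stable chain condition~\ref{SDCC}, and paying for this weakening with the full strength of Fact~\ref{6.7}, which played no role in the stable argument. We may assume $\R$ is infinite. It suffices to prove that $[\R:\C(a)]$ is finite for every $a\in\R$, and then to combine this with~\ref{BS} applied to the uniformly definable family $\{\C(a):a\in\R\}$ of centralisers, exactly as at the end of the proof of Fact~\ref{stable} but with~\ref{BS} in place of~\ref{SDCC}, in order to realise $\Z(\R)$ as a finite intersection $\C(a_1)\cap\cdots\cap\C(a_m)$, so that $[\R:\Z(\R)]\leqslant\prod_i[\R:\C(a_i)]<\infty$. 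Making this last reduction go through with only~\ref{BS} already needs slightly more than a transcription of Fact~\ref{stable}, since~\ref{BS}, unlike~\ref{SDCC}, does not collapse an \emph{infinite} intersection of uniformly definable subgroups to a finite one; I come back to this. So fix $a\in\R$ and suppose, for a contradiction, that $[\R:\C(a)]$ is infinite.

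Put $\sigma=\sigma_a$ (conjugation by $a$) and $\gamma=\sigma-\id$, a nonzero $1$-twist with $\ker\gamma=\FR=\C(a)$ of infinite right codimension in $\R$; as $\sigma$ is bijective, the standing hypotheses of Section~\ref{S:L} hold. I claim $\gamma$ is onto $\R$, which contradicts Corollary~\ref{metro}: if $\gamma(w)=1$ then $awa^{-1}=w+1$, whence $a\cdot(wa^{-1})-(wa^{-1})\cdot a=awa^{-1}-w=1$. To prove the claim I follow Scanlon's argument as in Fact~\ref{stable}. The additive subgroups $b\gamma(\R)$, for $b\in\R^\times$, are uniformly definable, and $I=\bigcap_{b\in\R^\times}b\gamma(\R)$ is a left ideal of $\R$ (if $r\in I$ and $c\in\R^\times$ then $r\in(c^{-1}b)\gamma(\R)$ for every $b$, so $cr\in b\gamma(\R)$ for every $b$), hence $I=(0)$ or $I=\R$; if $I=\R$ then $\gamma(\R)=\R$ and we are done. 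Assume then $I=(0)$, and let $n$ be the Baldwin--Saxl integer attached to the family $\{b\gamma(\R):b\in\R^\times\}$. Since $[\R:\FR]$ is infinite, for every $m$ we may pick $b_1,\dots,b_m\in\R^\times$ with $b_1^{-1},\dots,b_m^{-1}$ left $\FR$-linearly independent; the $\sigma$-linear set $G_{\bar b}=\{\bar x\in\R^m:b_1\gamma(x_1)=b_i\gamma(x_i),\ 1\leqslant i\leqslant m\}$ is cut out in $\R^m$ by $m-1$ twists, so it has Zariski dimension at least $1$ by Fact~\ref{5.4}, and by Facts~\ref{0} and~\ref{00} its radical component is $\sigma$-isomorphic to $\R^{\geqslant1}$, in particular infinite. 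The $\sigma$-morphism $G_{\bar b}\to\R$, $\bar x\mapsto b_1\gamma(x_1)$, has image $\bigcap_{i=1}^m b_i\gamma(\R)$ and kernel $\FR^m$, which is finite-dimensional over $\FR$; hence $\bigcap_{i=1}^m b_i\gamma(\R)$ is infinite for \emph{every} $m$.

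It remains to deduce $I\neq(0)$ from this uniform largeness of the finite subintersections; this is the point where~\ref{BS} must be used more cleverly than by mere transcription of the stable proof, for an NIP group may carry an infinite strictly decreasing chain of uniformly definable subgroups and so $I$ need not be any finite subintersection. The idea, following \cite[Theorem~4.3]{KSW2011}, is to apply~\ref{BS} to an $(n+1)$-tuple $b_1,\dots,b_{n+1}$ with $\FR$-linearly independent inverses: some $b_j\gamma(\R)$ is then redundant in $\bigcap_{i=1}^{n+1}b_i\gamma(\R)$, and here Fact~\ref{6.7} is genuinely needed, forcing $G_{\bar b}$ itself — not merely its radical component — to be radical, hence $\sigma$-isomorphic to a power of $\R$, so that the intersection is large in the precise sense supplied by that isomorphism. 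Using in addition that $\lambda\gamma(\R)\subseteq\gamma(\R)$ for $\lambda\in\FR$ (because $\gamma(\lambda x)=\lambda\gamma(x)$), one gets $\bigcap_{i=1}^n b_i\gamma(\R)\subseteq c\gamma(\R)$ whenever $c^{-1}$ lies in the left $\FR$-span of $b_1^{-1},\dots,b_n^{-1}$, and for the remaining $c$ one rotates a fresh $\FR$-independent element into the tuple in place of the redundant $b_j$; once this process stabilises it produces an $n$-fold intersection $\bigcap_{i=1}^n b_i\gamma(\R)$ contained in $c\gamma(\R)$ for \emph{all} $c\in\R^\times$, hence equal to $I$, and as it is infinite we obtain $I\neq(0)$, the required contradiction. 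I expect the main obstacle to be exactly this last step — forcing the rotation to terminate, i.e.\ making the Baldwin--Saxl redundancy cooperate with the $\FR$-linear independence that governs Fact~\ref{6.7} — together with the closely analogous use of~\ref{BS} needed to pass from ``$[\R:\C(a)]$ finite for every $a$'' to ``$[\R:\Z(\R)]$ finite''; everything else is the stable argument together with the linear machinery of Section~\ref{S:L}.
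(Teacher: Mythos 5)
Your setup (reducing to the finiteness of $[\R:\C(a)]$, aiming to show $\gamma=\sigma_a-\id$ is onto and contradicting Corollary~\ref{metro}, introducing the groups $G_{\bar b}$ with $\FR$-independent $b_i^{-1}$ so that Fact~\ref{6.7} makes them radical, and invoking the chain condition~\ref{BS}) coincides with the paper's. But the way you extract surjectivity of $\gamma$ from the Baldwin--Saxl redundancy has a genuine gap, and you have located it yourself: the ``rotation'' process. You are still running the stable-case argument, i.e.\ trying to prove that the left ideal $I=\bigcap_{b\in\R^\times}b\gamma(\R)$ is nonzero by exhibiting it as a single $n$-fold subintersection. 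Under NIP there is no reason this is possible: Fact~\ref{BS} only controls \emph{finite} subfamilies, and an NIP group may carry an infinite strictly descending chain of uniformly definable subgroups (already $(\mathbf Z,+)$ with the subgroups $n\mathbf Z$ shows that all finite subintersections can be infinite while the total intersection is trivial -- well, nonzero there, but strictly smaller than every finite subintersection). In your rotation, each time the flagged redundant index lies in the current $n$-tuple rather than being the freshly added element, the current $n$-fold intersection is replaced by a possibly strictly smaller one; nothing forces this to happen only finitely often, and to cover all $c\in\R^\times$ via your (correct) span observation you must process an infinite $\FR$-independent family, so ``once this process stabilises'' is precisely the unproved claim on which everything rests. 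The paper never proves $I\neq(0)$ at all.

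What the paper does instead, following \cite[Theorem 4.3]{KSW2011}, is to use the redundancy for a \emph{fixed} $(n+1)$-tuple only to conclude that the coordinate projection $\pi\colon G_{(b_1,\dots,b_{n+1})}\to G_{(b_1,\dots,b_n)}$ is onto. Since both groups are radical (this is where Fact~\ref{6.7} is needed), Fact~\ref{00} gives $\sigma_a$-isomorphisms of each with $(\R,+)$, so $\pi$ transports to an onto $1$-twist $\rho$ with nontrivial kernel; after replacing $\rho$ by $\rho^*(x)=\rho(cx)$ one gets $\ker\rho^*=\ker\gamma=\C(a)$, then Fact~\ref{3.2} factorises $\rho^*=\delta\gamma$, and $\delta$ is shown injective by passing to a linearly surjective difference extension (Fact~\ref{6.3}), whence $\gamma$ is onto. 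This factorisation step is the idea missing from your proposal; without it (or a proof of your stabilisation claim, which would in effect recover a stability-type chain condition that NIP does not provide), the argument does not go through.
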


\begin{proof}It suffices to show that for every such division ring $\R$ and $a\in\R$, the dimension \mbox{$[\R:\C(a)]$} is finite (for in that case, the set $\left\{[\R:\C(a)]\colon a\in\R\right\}$ is bounded by the Compactness Theorem, hence any descending chain of centralisers must stabilise by the \NIP{} chain condition~\ref{BS}). Let us assume for a contradiction that $[\R:\C(a)]$ is infinite for some $a\in\R$. Let $\sigma_a$ be the conjugation map by~$a$ and $\gamma=\sigma_a-\id$. We shall show that $\gamma\colon\R\rightarrow\R$ is onto, a contradiction with Corollary~\ref{metro}. We adapt the proof of \cite[Theorem 4.3]{KSW2011}. For every natural number $m\geqslant 1$ and infinite tuple $\bar b\in\R^{\mathbf N}$, let us consider the $\sigma_a$-linear  additive subgroup $G_{(b_1,\dots,b_m)}$ of~$\R^m$ defined by \[G_{(b_1,\dots,b_m)}=\left\{(x_1,\dots,x_m)\in \R^{m}\colon b_1\cdot\gamma(x_1)=b_i\cdot\gamma(x_i)\text{ for all } 1\leqslant i\leqslant m\right\}.\] One has $\dim G_{(b_1,\dots,b_m)}\geqslant 1$ by Fact~\ref{5.4}. Consider the first projection $\pi_1\colon G_{\bar b}^{m}\rightarrow \R$. One has \[\ker\pi_1=\{0\}\times\ker\gamma\times\dots\times \ker\gamma.\] Since $\ker\gamma=\C(a)$ is defined by the equation $\sigma-\id$, the $\Rs$-module $\I(\ker\gamma)$ has $\Rs$-dimension~$1$, so $\ker\gamma$ has Zariski dimension $0$. By Fact~\ref{5.7}, the kernel $\ker\pi_1$ also has Zariski dimension $0$. By Fact~\ref{5.8}, one has \[\dim G_{(b_1,\dots,b_m)}=1.\] Since $[\R:\C(a)]$ is infinite, by Fact~\ref{6.7}, one can chose an infinite tuple $\bar b\in\R^{\mathbf N}$ such that the $\sigma_a$-linear group $G_{(b_1,\dots,b_m)}$ is radical for every $m$. By the \NIP{} chain condition~\ref{BS}, there are natural numbers~$n$ and $i$ such that \[\bigcap_{j\in\{1,\dots,n+1\}} b_j\cdot\gamma(\R)=\bigcap_{j\in\{1,\dots,n+1\}\setminus\{i\}} b_j\cdot\gamma(\R).\] Note that this is the only place where we use the \NIP{} hypothesis to show that $\gamma$ is onto. We may reorder the tuple $(b_1,\dots,b_{n+1})$ if need be and assume that $i=n+1$, so that the projection \[\pi\colon G_{(b_1,\dots,b_{n+1})}\rightarrow G_{(b_1,\dots,b_{n})}\] on the $n$ first coordinates is onto. Since $G_{(b_1,\dots,b_{n+1})}$ and $G_{(b_1,\dots,b_{n})}$ are radical, by Fact~\ref{00}, there are two $\sigma_a$-isomorphisms \[\alpha\colon G_{(b_1,\dots,b_{n+1})}\rightarrow (\R,+)\quad\text{ and }\quad\beta\colon G_{(b_1,\dots,b_{n})}\rightarrow (\R,+).\] The $\sigma_a$-morphism $\rho=\beta\pi\alpha^{-1}$ makes the following diagram commute.
\begin{center}\begin{tikzpicture}
  \matrix (m) [matrix of math nodes,row sep=3em,column sep=4em,minimum width=2em]
  {
    G_{(b_1,\dots,b_{n+1})} & G_{(b_1,\dots,b_{n})} \\
     (\R,+) & (\R,+) \\};
  \path[-stealth]
    (m-1-1) edge node [left] {$\alpha$} (m-2-1)
            edge node [above] {$\pi$} (m-1-2)
    (m-2-1.east|-m-2-2) edge node [below] {$\rho$}
             (m-2-2)
    (m-1-2) edge node [right] {$\beta$} (m-2-2)
            ;
\end{tikzpicture}\end{center}
Since $\ker\pi=\{0\}\times\dots\times\{0\}\times\C(a)$, the map $\rho$ has a nontrivial kernel. Let $c\in\ker\rho\setminus\{0\}$, and put $\rho^*(x)=\rho(cx)$. Then $\rho^*\colon\R\rightarrow\R$ is onto since $\rho$ is. One also has $\ker \rho^*=c^{-1}\cdot\alpha(\ker\pi)$.
Since $\ker\pi$ has right $\C(a)$-dimension $1$, $\ker\rho^*$ also has right $\C(a)$-dimension $1$. Since $\ker\rho^*$ contains $1$, one must have $\ker\rho^*=\ker\gamma=\C(a)$, and the equality $\ker\rho^*=\ker\gamma$ holds in any difference extension of~$(\R,\sigma_a)$. By Fact~\ref{3.2}, the twist $\rho^*$ factorises in $\rho^*=\delta \gamma$ with $\gamma=\sigma_a-\id$. If $x\in\ker\delta$, then $x=\gamma(y)$ for some $y$ in a linearly surjective extension of $(\R,\sigma_a)$ given by Fact~\ref{6.3}, hence $y\in \ker\rho^*=\ker \gamma$ so $x=0$. It follows that $\delta$ is bijective, so $\gamma$ is onto $\R$, which is the desired contradiction.\end{proof}

Elb\'ee \cite{Elbee2016} has a few lines proof, using computational properties of the dp-rank, that a \emph{strongly} \NIP{} division ring of dp-rank $n$ has dimension at most $n$ over its centre (in any characteristic), in the same vein as the proof of \cite[Proposition 7.9]{Hrushovski2002} for division rings of finite $S_1$-rank.

\begin{proposition}If $(K,\sigma)$ is an \NIP{} difference field of characteristic $p$, then either $[K:\FR]$ or $\FR$ is finite.\end{proposition}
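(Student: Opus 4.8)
The plan is to mimic the argument that proved Corollary~\ref{C:SW} for stable difference fields, but now using the NIP tools developed in the previous sections instead of stability. So suppose for a contradiction that both $[K:\FR]$ and $\FR$ are infinite. The key point is that an infinite NIP field is Artin-Schreier closed by Fact~\ref{KSW}; applied here, $\FR$ (being an infinite NIP field of characteristic $p$) has no Artin-Schreier extension. Hence it suffices to produce an element $x\in K$ with $\sigma(x)-x=1$: then, as in the proof of Corollary~\ref{C:SW}, one gets $\sigma(x^p-x)=(x^p-x)$, so $x^p-x\in\FR$, and since $\FR$ is Artin-Schreier closed the equation $y^p-y=x^p-x$ has a solution $y\in\FR\subset K$; replacing $x$ by $x-y$ gives an element in $\FR$ with $\sigma(x-y)-(x-y)=1$, which is absurd since the left side is $0$.

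So the real work is to show that the map $\gamma=\sigma-\id$ is onto $K$ when $[K:\FR]$ is infinite. Here I would run essentially the same argument as in the proof of Theorem~\ref{gegen} (which is itself adapted from \cite[Theorem 4.3]{KSW2011}), but in the commutative setting $(\R,\sigma_a)$ replaced by $(K,\sigma)$. First pass to a difference extension $(K,\sigma)$ that is linearly surjective (Fact~\ref{6.3}) and, after realising a suitable type, reduce to the case where $\sigma$ is injective, where $K$ is sufficiently saturated, and where $[K:\FR]$ remains infinite. Since $[K:\FR]$ is infinite, Fact~\ref{6.7} lets me choose an infinite tuple $\bar b\in K^{\mathbf N}$ so that for every $m$ the $\sigma$-linear group $G_{(b_1,\dots,b_m)}=\{(x_1,\dots,x_m):b_1\gamma(x_1)=b_i\gamma(x_i),\ 1\leqslant i\leqslant m\}$ is radical; a rank–nullity computation as in Theorem~\ref{gegen} (using Facts~\ref{5.4}, \ref{5.7}, \ref{5.8}) shows $\dim G_{(b_1,\dots,b_m)}=1$, so by Fact~\ref{00} each such group is $\sigma$-isomorphic to $(K,+)$. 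Now apply the NIP chain condition~\ref{BS} to the uniformly definable family of subgroups $\{b\cdot\gamma(K):b\in K^\times\}$ to get $n$ and an index $i$ with $\bigcap_{j\leqslant n+1}b_j\gamma(K)=\bigcap_{j\leqslant n+1,\ j\neq i}b_j\gamma(K)$; after reordering, the projection $\pi\colon G_{(b_1,\dots,b_{n+1})}\to G_{(b_1,\dots,b_n)}$ onto the first $n$ coordinates is onto, and transporting along the $\sigma$-isomorphisms to $(K,+)$ gives a surjective twist $\rho^*\colon K\to K$ with $\ker\rho^*$ of $\FR$-dimension $1$ and containing $1$, hence $\ker\rho^*=\ker\gamma$. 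Factoring $\rho^*=\delta\gamma$ via Fact~\ref{3.2} and checking $\delta$ is injective (using linear surjectivity of the extension as in Theorem~\ref{gegen}) forces $\gamma$ to be onto, a contradiction with Corollary~\ref{metro} — except Corollary~\ref{metro} is about division rings, so here instead the contradiction comes directly: $\gamma$ onto means there is $x$ with $\sigma(x)-x=1$, and we proceed as in the first paragraph.

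The one subtlety is the reduction step: Fact~\ref{6.7}, Fact~\ref{5.8} and Fact~\ref{00} all require the hypotheses set up in Section~\ref{S:L}, namely that $[\R:\FR]$ be infinite and that $\sigma$ be surjective; $\sigma$ need not be surjective on $K$ to begin with, so I first replace $K$ by $k=\bigcap_n\sigma^n(K)$ — but this is exactly the move already used in the proof of Theorem~\ref{T:SW}, where by compactness and saturation one checks that $[k:k\cap\FR]$ stays infinite when $[K:\FR]$ is — and then transfer the conclusion ``$\gamma$ onto $k$'' back up to $K$ by the identity $K=\sigma(K)+\gamma(K)$ and iteration, again as in the proof of Theorem~\ref{T:SW}. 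I expect the main obstacle to be bookkeeping rather than a genuine new idea: one must be careful that the NIP chain condition~\ref{BS} applies (the family $\{b\cdot\gamma(K):b\in K^\times\}$ is uniformly definable, so it does), and that all the $\sigma$-linear machinery is being invoked over a model where $\sigma$ is surjective and the fixed field is small relative to $K$. Everything else is a faithful transcription of the proof of Theorem~\ref{gegen} into the commutative difference-field language.
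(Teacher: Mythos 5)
Your proposal follows essentially the same route as the paper's own (very terse) proof: adapt the argument of Theorem~\ref{gegen}, working over $k=\bigcap_n\sigma^n(K)$ as in Theorem~\ref{T:SW} so that the $\sigma$-linear machinery of Section~\ref{S:L} is available, to show that $\sigma-\id$ is onto, and then conclude exactly as in Corollary~\ref{C:SW}, with Fact~\ref{KSW} supplying Artin--Schreier closedness of the infinite NIP field $\FR$. The only point to correct is your opening suggestion to ``first pass to'' the linearly surjective extension of Fact~\ref{6.3}: that extension is purely algebraic, not elementary, so NIP (and hence the chain condition~\ref{BS}) need not survive in it; as in the proof of Theorem~\ref{gegen}, it may only be invoked at the final step (injectivity of $\delta$ in the factorisation $\rho^*=\delta\gamma$), while the chain condition must be applied inside $K$ itself, which is what you in fact do later in the write-up.
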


\begin{proof}If $[K:\FR]$ is infinite, an argument as in the proof of Theorem~\ref{gegen} (working over $k=\bigcap\sigma^n(K)$) shows that $\sigma-\id$ is surjective, so the argument in the proof of Corollary~\ref{C:SW} applies.\end{proof}

\section{The \texorpdfstring{$\rm NTP_2$}{} case}

Shelah had introduced already in \cite[Theorem 0.2]{Shelah1980} a large class of structures now called \NTP{} (for ``not the tree property of the second kind'') including both \NIP{} and simple structures. We note here that the conclusions of \cite[Theorem 3.5]{Milliet2011} and Theorem~\ref{gegen} extend to \NTP{} division rings of characteristic $p$, using Chernikov, Kaplan and Simon's results\begin{itemize}
\item that \NTP{} groups satisfy a descending condition chain \cite[Theorem 2.4]{CKS2015}:

\begin{fact}[\NTP{} descending chain condition]\label{F:N} In an \NTP{} group, to any formula $\phi(x,\bar y)$ is associated a natural number $n\in\mathbf N$ such that the intersection of any finite family $\{G_i\colon i<k\}$ of {\bf normal} subgroups defined respectively by the formulas $\left\{\phi(x,\bar a_i)\colon i<k\right\}$ has finite index in a subintersection of at most $n$ among them.\end{fact}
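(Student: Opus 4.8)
This is \cite[Theorem 2.4]{CKS2015}; we only indicate the shape of the argument, an \NTP{} refinement of the Baldwin--Saxl condition~\ref{BS}. The plan is to argue by contradiction. If the conclusion fails for a formula $\phi(x,\bar y)$, then for every $n\in\mathbf N$ there are parameters $\bar a_1,\dots,\bar a_k$ (the length $k$ depending on $n$) such that each $H_i=\phi(G,\bar a_i)$ is a normal subgroup and every intersection of at most $n$ of the $H_i$ has infinite index in $H:=\bigcap_{i\leqslant k}H_i$.

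First I would isolate a good sub-family: pick $J\subseteq\{1,\dots,k\}$ minimal with $[\bigcap_{i\in J}H_i:H]$ finite. By hypothesis $|J|>n$, and minimality forces, for every $\ell\in J$, the index $\bigl[\bigcap_{i\in J\setminus\{\ell\}}H_i:\bigcap_{i\in J}H_i\bigr]$ to be infinite. Writing $M_\ell=\bigcap_{i\in J\setminus\{\ell\}}H_i$, the subgroup $\bigcap_{i\in J}H_i=M_\ell\cap H_\ell$ is normal in $M_\ell$ (as $H_\ell\trianglelefteq G$) and $M_\ell/(M_\ell\cap H_\ell)$ embeds into $G/H_\ell$; being infinite, this quotient yields elements $g_{\ell,m}\in M_\ell$ ($m<\omega$) lying in pairwise distinct left cosets of $H_\ell$.

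The crux is a short computation. Let $\psi(x;\bar y,z)$ be a formula with $\psi(G;\bar a,g)=g\cdot\phi(G,\bar a)$ whenever $\phi(G,\bar a)$ is a subgroup. Fix $f\colon J\to\omega$, enumerate $J=\{\ell_1<\dots<\ell_{|J|}\}$, and set $x_f=g_{\ell_1,f(\ell_1)}\cdots g_{\ell_{|J|},f(\ell_{|J|})}$. Then $x_f\in g_{\ell,f(\ell)}H_\ell$ for every $\ell\in J$: each factor $g_{\ell_s,\cdot}$ with $\ell_s\neq\ell$ lies in $H_\ell$ (because $g_{\ell_s,\cdot}\in M_{\ell_s}\subseteq H_\ell$), and normality of $H_\ell$ lets one conjugate the block of factors preceding $g_{\ell,\cdot}$ back into $H_\ell$. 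Hence the parameters $(\bar a_\ell,g_{\ell,m})_{\ell\in J,\,m<\omega}$ form an inp-pattern of depth $|J|>n$ for the \emph{single} formula $\psi$: each row is $2$-inconsistent (distinct left cosets of a fixed subgroup are disjoint) while every path is realised by the corresponding $x_f$. As $n$ was arbitrary, compactness upgrades these unboundedly deep finite patterns to one of depth $\omega$; that is, $\psi$ has $\mathrm{TP}_2$, contradicting that $G$ is \NTP{}.

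The point where this is harder than its \NIP{} counterpart~\ref{BS} is precisely the insistence that the rows of the array be instances of one formula: this is what forces one to use cosets of the individual $H_i$ rather than of the auxiliary intersections $M_\ell$, and, in turn, why the conclusion can only assert ``finite index in a subintersection'' (and why normality is needed), instead of the cleaner ``equal to a subintersection'' one gets under \NIP{} from a shattering argument.
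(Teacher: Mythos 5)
Correct: the paper offers no proof of Fact~\ref{F:N}, simply citing \cite[Theorem 2.4]{CKS2015}, and your sketch faithfully reproduces the argument of that reference (minimal subfamily $J$, coset representatives drawn from the drop-one intersections $M_\ell$, products $x_f$ realising the paths, normality of $H_\ell$ to land in the prescribed coset, hence arbitrarily deep inp-patterns for the single coset formula $\psi$, contradicting \NTP{}). One wording slip only: in your opening sentence the index is stated backwards --- it is $H=\bigcap_{i\leqslant k}H_i$ that has infinite index in every subintersection of at most $n$ of the $H_i$, i.e. $\left[\bigcap_{i\in J}H_i:H\right]$ is infinite for $|J|\leqslant n$ --- which is in fact exactly what you use in the next step when extracting the minimal $J$.
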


\item that \NTP{} fields have finitely many Artin-Schreier extensions \cite[Theorem 3.1]{CKS2015}, conclusion which was remarked by Wagner for simple fields \cite[Theorem 3.2]{KSW2011}.\end{itemize}

We recall the definition of an \NTP{} structure (although we shall not use it directly). Given a natural number $k\in\mathbf N$, and a structure $(M,L)$, an $L$-formula $\phi(x,\bar y)$ has the \emph{$k$-tree property$_2$} if there is an array $(\bar a_{i,j})_{i,j<k}$ of tuples in $M$ such that $\{\phi(x,a_{i,j})\colon j<k\}$ are pairwise inconsistent for each $i<k$ and $\{\phi(x,a_{i,f(i)})\colon i<k\}$ is consistent for any $f\colon\{1,\dots,k-1\}\rightarrow\{1,\dots,k-1\}$.

\begin{definition}[Shelah] A structure $(M,L)$ is \NTP{} if for every $L$-formula, there is a natural number $k\in\mathbf N$ such that $\varphi(x,\bar y)$ does not have the $k$-tree property$_2$.\end{definition}

\begin{lemme}\label{L:N}An \NTP{} division ring of characteristic $p$ has a definable division subring of finite codimension in which $xy-yx\neq1$ holds.\end{lemme}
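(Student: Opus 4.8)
The plan is to re-run the proof of Corollary~\ref{metro}, the single change being that the \NIP{} input -- Fact~\ref{KSW}, that an infinite \NIP{} field has \emph{no} Artin-Schreier extension -- is available for \NTP{} fields only in the weaker form of \cite[Theorem~3.1]{CKS2015}, that an \NTP{} field has only \emph{finitely many} Artin-Schreier extensions; this is exactly why the metro equation can no longer be ruled out in $\R$ itself but only, after a controlled shrinking, in a definable division subring of finite codimension. The easy reductions are as before: a finite division ring is commutative by Wedderburn, and a division ring all of whose elements have finite multiplicative order is commutative by Herstein's Lemma~\ref{H}, so in either case $xy-yx=0$ and $D_0=\R$ works; thus I may assume $\R$ infinite and noncommutative. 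Recall then the Artin-Schreier mechanism behind Corollary~\ref{metro}: writing $\wp(a)=a^p-a$ and $E_a=\Z(\C(\wp(a)))$, one has $a\in\C(\wp(a))$ and $\wp(a)\in E_a$, and $a$ is a root of the \emph{separable} polynomial $X^p-X-\wp(a)\in E_a[X]$, so $E_a(a)/E_a$ is either trivial or a degree-$p$ Artin-Schreier extension; moreover, if $v^{-1}av=a+1$ with $v\in\R^\times$, then $\wp(v^{-1}av)=\wp(a)$ forces $v\in\C(\wp(a))$, so the pair $(a,v)$ exhibits $E_a(a)\neq E_a$. Since $xy-yx\neq1$ is equivalent to the unsolvability of $v^{-1}av=a+1$ (take $a=yx$, $v=x^{-1}$, respectively $x=v^{-1}$, $y=av$), it follows that in any division ring $R$ of characteristic $p$ the identity $xy-yx\neq1$ holds as soon as $\C_R(\wp(a))\subseteq\C_R(a)$ for every $a\in R$, equivalently $\Z(\C_R(\wp(a)))(a)=\Z(\C_R(\wp(a)))$ for every $a$. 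The goal is therefore to produce a definable $D_0\leqslant\R$ of finite codimension with this last property.

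Two quantitative ingredients should do the work. First, \cite[Theorem~3.1]{CKS2015} applied uniformly over the definable family yields a single $N\in\mathbf N$ bounding the number of Artin-Schreier extensions of every infinite field of the form $\Z(\C_\R(c))$, $c\in\R$ (uniformly definable and \NTP{}). Second, Brauer's Fact~\ref{F:B} applied inside $\C(\wp(a))$ -- where the centraliser of $a$ is just $\C(a)$, since $\C(a)\subseteq\C(\wp(a))$ -- gives $[\C(\wp(a)):\C(a)]=[E_a(a):E_a]\leqslant p$; so the ascending tower of centralisers $\C(a)\subseteq\C(\wp(a))\subseteq\C(\wp^{\,2}(a))\subseteq\cdots$ has all consecutive indices at most $p$, and the dual tower of centres $\Z(\C(a))\supseteq\Z(\C(\wp(a)))\supseteq\cdots$ is descending, with $\wp^{\,n}(a)$ lying in the $n$-th centre but leaving the $(n+1)$-st exactly when the $n$-th step of the centraliser tower is proper. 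Using the \NTP{} chain condition~\ref{F:N} on the uniformly definable family of additive subgroups $\{\C_\R(c):c\in\R\}$ of $(\R,+)$ (all normal, $(\R,+)$ being abelian), and its consequence for centralisers of tuples, I would take $D_0$ to be a finite intersection of finite-codimension centralisers which is $\subseteq$-minimal up to finite index among these -- morally, $D_0$ ``as commutative as the chain condition permits''. I then expect to show that inside $D_0$ the obstruction $\Z(\C_{D_0}(\wp(a)))(a)\neq\Z(\C_{D_0}(\wp(a)))$ cannot persist: were it to occur for some $a\in D_0$, the iterated-$\wp$ configuration inside $D_0$ would either run long enough to produce more than $N$ pairwise distinct Artin-Schreier extensions among the fields $\Z(\C_{D_0}(\wp^{\,n}(a)))$, or else it would stop, and the place where it stops would exhibit a proper finite-codimension definable division subring strictly below $D_0$, against minimality. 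This gives $\C_{D_0}(\wp(a))\subseteq\C_{D_0}(a)$ for all $a\in D_0$, hence $xy-yx\neq1$ in $D_0$.

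The step I expect to be the real obstacle is precisely this passage to $D_0$ and the verification that the obstruction dies there. Fact~\ref{F:N} controls \emph{intersections}, i.e.\ descending configurations, whereas the iterated-$\wp$ tower $\C(a)\subseteq\C(\wp(a))\subseteq\cdots$ is \emph{ascending}, so it cannot be fed to the chain condition directly; the way around this is to dualise, working with the descending tower of centres (or, as in the proof of Theorem~\ref{gegen}, with the images of the maps $\sigma_{\wp^{\,n}(a)}-\id$), and there to combine the absolute bound $p$ on consecutive steps with the \NTP{} intersection bound and the Artin-Schreier bound $N$ so as to force the tower to be eventually constant. One also has to treat separately the degenerate case in which $\Z(\R)$ is relatively algebraically closed in $\R$, so that $\R$ itself is the only finite-codimension definable division subring obtained from centralisers: there every non-central element has infinite order (a finite-order non-central element would be algebraic over $\Z(\R)$ yet not central), every metro witness $a$ has $E_a$ infinite, and one runs the same collapsing argument with $\R$ in place of $D_0$, concluding $xy-yx\neq1$ in $\R$ directly. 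Modulo this bookkeeping, the scheme produces the required definable division subring $D_0$ of finite codimension.
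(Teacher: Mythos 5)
Your preliminary observations are fine (the reformulation of the metro inequality as $\C(\wp(a))\subseteq\C(a)$ for all $a$, and the bound $[\C(\wp(a)):\C(a)]\leqslant p$ via Fact~\ref{F:B}), but the proposal has genuine gaps exactly at the step you yourself flag. First, the object $D_0$ need not exist: Fact~\ref{F:N} says that a finite intersection of uniformly definable normal subgroups has finite index in a subintersection of bounded size; it does not produce a finite intersection of finite-codimension centralisers that is minimal up to finite index, because the codimensions $[\R:\C(c)]$, as $c$ ranges over elements algebraic over the centre, are not bounded, so the family you quantify over can contain strictly decreasing sequences with no minimal member. (The paper invokes Fact~\ref{F:N} only after reducing to centralisers of torsion elements whose order is bounded by a fixed $p^n$, so that Fact~\ref{F:B} bounds every codimension by $p^n$ and a minimal intersection exists.) Second, the collapsing argument at $D_0$ is not an argument: the fields $\Z\left(\C_{D_0}(\wp^{n}(a))\right)$ form a chain of \emph{different} base fields, so ``more than $N$ pairwise distinct Artin--Schreier extensions'' is not something \cite[Theorem 3.1]{CKS2015} can count -- that theorem bounds the Artin--Schreier extensions of a single field, and the uniform $N$ over the definable family is not in the cited statement; and in the other branch, when the tower $\C_{D_0}(\wp^{n}(a))$ stabilises, the centraliser you obtain is, for a metro witness $a$ of infinite order, not known to have finite codimension in $D_0$ (that is essentially what the whole theorem is trying to prove), so no conflict with your minimality of $D_0$ arises. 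This is also why the paper's Claim works with the Frobenius powers $a^{p^i}$ rather than your iterates $\wp^{n}(a)$: the constants $(a^p-a)^{p^i}$ all lie in the single field $\Z(\C(a^p-a))$, so the finiteness of \emph{its} Artin--Schreier extensions can be applied to a fixed base.

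The second, and more important, omission is that your reductions (Wedderburn, Herstein, and the ``degenerate case'' aside) do not touch the situation the lemma is really about: finite-order metro witnesses, i.e.\ torsion elements $a$ with $b^{-1}ab=a+1$. For such $a$ the set $\{a^{p^i}\colon i\in\mathbf N\}$ is finite and the Artin--Schreier mechanism is silent, and this is precisely why one can only hope for a finite-codimension subring rather than for $\R$ itself. The paper's proof is organised entirely around this point: it assumes $\aleph_0$-saturation, introduces the definable set $M=\{x\colon\exists b\,(b^{-1}xb-x=1)\}$, and splits according to whether some $\C(\bar\alpha)$, for $\bar\alpha$ a finite tuple of torsion elements, meets $M$ in only finitely many torsion elements. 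In that case $\C(\bar\alpha,\bar\omega)$ is the required subring, of finite codimension by Fact~\ref{F:B}, and the metro inequality inside it follows from the claim $\C(a^p-a)\subseteq\C(a^{p^n})$ for infinite-order $a$ (the only use of \cite[Theorem 3.1]{CKS2015}). In the opposite case, compactness and saturation yield an infinite-order element of $M$ if the torsion orders are unbounded, while if they are bounded, Fact~\ref{F:N} combined with Fact~\ref{F:B} produces an infinite commuting set of roots of $x^{p^n}-x$, which is absurd. No substitute for this case analysis appears in your sketch, so the proposal does not yet prove the lemma.
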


\begin{proof}We may assume that the ambient division ring is $\aleph_0$-saturated, and first claim:\begin{Aff}\label{C:N}For any element $a$ of infinite order, one has $\C(a^p-a)\subset \C\left(a^{p^n}\right)$ for some $n\in\mathbf N$.\end{Aff}
By \cite[Theorem 3.1]{CKS2015}, the field $\Z(\C(a^p-a))$ has finitely many Artin-Schreier extensions. Since $\Z(\C(a^p-a))(a^{p^i})$ is an Artin-Schreier extension for each $i\in\mathbf N$, and since the set $\{a^{p^i}\colon i\in\mathbf N\}$ is infinite, there is an $n\in\mathbf N$ such that $a^{p^n}\in\Z(\C(a^p-a))$, whence $\C(a^p-a)\subset\C(a^{p^n})$, as claimed. Writing $M$ for the set defined by $\exists b \left(b^{-1}xb-x=1\right)$, we split the proof of Lemma~\ref{L:N} into two cases:\begin{case}{$\bf1.$ \emph{There exists $\bar\alpha=(\alpha_1,\dots,\alpha_q)$, a finite tuple of elements of finite order, such that $M\cap\C(\bar\alpha)$ contains only finitely many $\bar\omega=(\omega_1,\dots,\omega_r)$ of finite order.}} We consider the division subring $C=\C(\bar\alpha,\bar\omega)$. It has finite codimension by Fact~\ref{F:B}, and we claim that it satisfies $xy-yx\neq 1$. Assume for a contradiction that $b^{-1}ab-a=1$ holds for some $(a,b)$ in $C$. If $a$ has finite order, then it is one of the $\omega_i$, so $a$ commutes with $b$, a contradiction. So $a$ has infinite order. One has $$b^{-1}(a^p-a)b=(a+1)^p-(a+1)=a^p-a,$$ and thus $b\in\C(a^{p^n})$ for some $n\in\mathbf N$. But one also has $b^{-1}\left(a^{p^n}\right){b}=a^{p^n}+1$, a contradiction.\end{case}
\begin{case}$\bf 2.$ \emph{For all $\bar\alpha=(\alpha_1,\dots,\alpha_q)$ of finite order, there are infinitely many elements $\{\omega_i\colon i\in I(\bar\alpha)\}$ of finite order in $M\cap\C(\bar\alpha)$.} 
 If the elements in $\{\omega_i\colon i\in I(1)\}$ have unbounded order, since $M$ is definable, by the Compactness Theorem and the saturation assumption, $M$ contains an element of infinite order, which leads to a contradiction as in Case~1, using Claim~\ref{C:N}. So the elements in $\{\omega_i\colon i\in I(1)\}$ have bounded order. They are all roots of a common polynomial $x^{p^n}-x$ for some $n\in\mathbf N$. Since the additive index $\left|C_1/C_2\right|$ is either $1$ or $\infty$ whenever $C_1$ is an infinite division ring with division subring $C_2$, by the \NTP{} descending chain condition~\ref{F:N} and Fact~\ref{F:B}, there are a natural number $k\in\mathbf N$ and a finite tuple $(\omega_1,\dots,\omega_{k})$ of elements of finite order such that $$\C(\omega_1,\dots,\omega_k)=\C(\omega_1,\dots,\omega_{k},\omega)$$ for any $\omega\in M$ of finite order. Putting $\bar\omega=(\omega_1,\dots,\omega_k)$, it follows that the elements in $\{\omega_i:i\in I(\bar\omega)\}$ form an infinite (by assumption) commuting set, and are zeros of $x^{p^n}-x$, a contradiction.\qedhere\end{case}\end{proof}

\begin{theoreme}\label{T:N}An \NTP{} division ring of characteristic $p$ has finite dimension over its centre.\end{theoreme}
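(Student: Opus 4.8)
The plan is to mimic the proof of Theorem~\ref{gegen} as closely as possible, with two adjustments. First, since the metro equation is only available on a definable division subring, the core of the argument must be run inside the subring $C$ provided by Lemma~\ref{L:N}: it is definable (hence \NTP{}), of finite codimension in $\R$, and satisfies $xy-yx\neq1$. Second, in place of the Baldwin--Saxl condition~\ref{BS} one uses the \NTP{} chain condition~\ref{F:N}; as it only applies to \emph{normal} subgroups and only delivers \emph{finite index} in a subintersection, the point is that every additive subgroup of $(\R,+)$ is normal, and that an infinite division subring of finite additive index in a division ring is the whole ring (the dichotomy already exploited in the proof of Lemma~\ref{L:N}).

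As in Theorem~\ref{gegen}, I would reduce to showing that $[\R:\C(a)]$ is finite, with a bound uniform in $a$. Granting this: the formula $\phi(x,y)\colon xy=yx$ defines a uniformly definable family of additive --- hence normal --- subgroups of $(\R,+)$, so by Fact~\ref{F:N} any finite intersection of centralisers has finite index in a subintersection of at most some fixed $n$ of them; each such subintersection is an infinite division ring (of finite codimension in the infinite $\R$), so the index dichotomy forces equality. Hence every finite intersection of centralisers is already an intersection of at most $n$ of them, so has codimension at most $N^n$; choosing a finite $F_0\subset\R$ with $[\R:\bigcap_{a\in F_0}\C(a)]$ maximal, one gets $\bigcap_{a\in F_0}\C(a)\subset\C(b)$ for all $b$, that is $\Z(\R)=\bigcap_{a\in F_0}\C(a)$ is of finite codimension.

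Now I would bound $[\R:\C(a)]$ first for $a\in C$. Suppose $[C:\C_C(a)]$ is infinite. Because $a$ commutes with the parameters defining $C$, conjugation by $a$ maps $C$ onto $C$, so $(C,\sigma_a)$ is an \NTP{} difference division ring with $\FR=\C_C(a)$ of infinite codimension and $\sigma_a$ bijective; moreover $\C_C(a)$ is infinite, having finite codimension in the infinite $\C(a)$. Running the argument of Theorem~\ref{gegen} over $C$: along an infinite tuple $\bar b$ chosen through Fact~\ref{6.7} so that all $G_{(b_1,\dots,b_m)}$ are radical, each of these has Zariski dimension $1$ by Facts~\ref{5.4}, \ref{5.7} and~\ref{5.8}; applying Fact~\ref{F:N} to the additive subgroups $b_j\cdot\gamma(C)$, where $\gamma=\sigma_a-\id$, yields (after a reorder) a projection $\pi\colon G_{(b_1,\dots,b_{n+1})}\to G_{(b_1,\dots,b_n)}$ with image of finite additive index. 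Transporting along $\sigma_a$-isomorphisms to $(C,+)$ (Facts~\ref{0} and~\ref{00}) turns $\pi$ into a $1$-twist $\rho\colon C\to C$ whose image is a finite-index right $\C_C(a)$-submodule of $(C,+)$, hence all of $C$ since $\C_C(a)$ is infinite. The endgame of Theorem~\ref{gegen} --- twisting $\rho$ by a nonzero element of $\ker\rho$, the factorisation~\ref{3.2}, and a linearly surjective extension from~\ref{6.3} --- then shows $\gamma$ is onto $C$, so $ay-ya=1$ is solvable in $C$, contradicting Lemma~\ref{L:N}.

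It remains to relay this from $C$ to $\R$. Using $[\R:C]<\infty$ and compactness (in a saturated model), $[\R:\C(a)]$ is then bounded, say by $N$, uniformly over $a\in C$; by Fact~\ref{F:B} every $a\in C$ is algebraic over $\Z(\R)$ of degree at most $N$. For arbitrary $b\in\R$, two of the $[\R:C]+1$ powers $1,b,\dots,b^{[\R:C]}$ lie in the same left coset of $C$, so some $b^k$ with $1\le k\le[\R:C]$ lies in $C$, hence is a root of a polynomial of degree $\le N$ over $\Z(\R)$; then $b$ is a root of the same polynomial in $T^k$, so $[\Z(\R)(b):\Z(\R)]\le kN$ and, by Fact~\ref{F:B} again, $[\R:\C(b)]\le[\R:C]\cdot N$ for all $b\in\R$, which is exactly the input required in the reduction. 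I expect the main obstacle to be this relaying step together with the weak form of the \NTP{} chain condition: the metro equation is available only on $C$, forcing the core argument inside $C$ and then a transfer back, and the chain condition must be used additively and coupled with the $\{1,\infty\}$ index dichotomy to recover the effect of Baldwin--Saxl.
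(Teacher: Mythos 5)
Your core argument is the paper's: reduce via Lemma~\ref{L:N} to a definable subring $C$ of finite codimension in which $xy-yx\neq1$ holds, run the proof of Theorem~\ref{gegen} for $\sigma_a$, $\gamma=\sigma_a-\id$ inside $C$ (Fact~\ref{6.7} for radicality of the $G_{\bar b}$, Facts~\ref{5.4}, \ref{5.7}, \ref{5.8}, \ref{0}, \ref{00}, \ref{3.2}, \ref{6.3} as before), and replace Baldwin--Saxl by the \NTP{} chain condition~\ref{F:N} applied to the additive (hence normal) subgroups $b_j\cdot\gamma(C)$, upgrading finite index to equality because one is dealing with right vector spaces over the infinite division ring $C\cap\C(a)$; carrying the finite index through $\pi$ and only upgrading at the level of $\ima\rho$ is a harmless variant of what the paper does. (One small wobble: $C\cap\C(a)$ is infinite not because it has finite codimension in $\C(a)$ --- you do not justify that --- but simply because every element of the infinite division ring $C$ has an infinite centraliser, as observed after Fact~\ref{H}.)

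The genuine gap is in your relaying step from $C$ to $\R$. You claim that among the $[\R:C]+1$ powers $1,b,\dots,b^{[\R:C]}$ two must lie in the same left coset of $C^\times$, so that some $b^k$ with $1\leqslant k\leqslant[\R:C]$ lies in $C$. The multiplicative index $[\R^\times:C^\times]$ is not bounded by the linear dimension $[\R:C]$, and the claim is false: for $\mathbf C\supset\mathbf R$ (dimension $2$) a generic $b=e^{i\theta}$ has no power in $\mathbf R$, and in characteristic $p$ take $K=\mathbf F_{p^2}(t)\supset C_0=\mathbf F_p(t)$ and $b=\alpha+t$ with $\alpha\in\mathbf F_{p^2}\setminus\mathbf F_p$: writing $k=p^mr$ with $p\nmid r$, the coefficient of $t^{p^m(r-1)}$ in $b^k=(\alpha^{p^m}+t^{p^m})^r$ is $r\alpha^{p^m}\notin\mathbf F_p$, so no positive power of $b$ lies in $C_0$. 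Since your uniform bound on $[\R:\C(b)]$ for \emph{all} $b\in\R$ --- which your first paragraph needs in order to exhibit $\Z(\R)$ as a finite intersection of centralisers --- rests entirely on this pigeonhole, the final step collapses as written. The repair is the route the paper leaves implicit: the argument you ran for elements of $C$ gives a uniform bound (by compactness) on $[C:C\cap\C(a)]$ for $a\in C$, so the chain-condition-plus-index-dichotomy computation performed \emph{inside} $C$ shows $[C:\Z(C)]$ is finite; then $[\R:\Z(C)]\leqslant[\R:C]\cdot[C:\Z(C)]$ is finite, so $\R$ has finite dimension over a commutative subfield and hence over its centre, exactly as in the proof of Fact~\ref{stable}.
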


\begin{proof}By Lemma~\ref{L:N}, it suffices to show that for every such division ring $\R$ satisfying $xy-yx\neq 1$, the dimension \mbox{$[\R:\C(a)]$} is finite for every $a$. Let us assume for a contradiction that $[\R:\C(a)]$ is infinite. As in the proof of Theorem~\ref{gegen}, we consider $\sigma_a$ the conjugation map by $a$, we write $\gamma=\sigma_a-\id$ and show that $\gamma$ is onto $\R$. This can be done choosing an infinite tuple $\bar b\in \R^{\mathbf N}$ such that the groups $G_{(b_1,\dots,b_n)}$ are radical for each $n\geqslant1$ thanks to Fact~\ref{6.7}, and applying the \NTP{} chain condition~\ref{F:N} to the family $\left\{b_i\cdot\gamma(\R)\colon i\in\mathbf N\right\}$, of right vector spaces over the infinite division ring $\C(a)$. This contradicts the assumption that $xy-yx\neq1$ holds in $\R$.\end{proof}

\begin{corollaire}The centre of an infinite \NTP{} division ring is infinite.\end{corollaire}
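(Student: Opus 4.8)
The plan is to route the entire argument through Theorem~\ref{T:N}, which is the only genuinely non-trivial ingredient, and to dispose of the characteristic zero case by a triviality. First I would observe that if $\R$ has characteristic zero, then its centre $\Z(\R)$ is a field of characteristic zero, hence contains a copy of $\mathbf Q$ and is already infinite; there is nothing to prove.

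So assume $\R$ has prime characteristic $p$, and argue by contradiction: suppose $\Z(\R)$ is finite. Being a commutative division ring, $\Z(\R)$ is a field, and being finite it is a finite field with some $q$ elements. By Theorem~\ref{T:N}, $\R$ has finite dimension $d=[\R:\Z(\R)]$ over $\Z(\R)$; as a $d$-dimensional vector space over a $q$-element field, $\R$ then has exactly $q^{d}$ elements. This contradicts the hypothesis that $\R$ is infinite, so $\Z(\R)$ must be infinite, as claimed.

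I do not expect any real obstacle here: everything of substance has been absorbed into Theorem~\ref{T:N}, and the residual step is the elementary fact that a finite-dimensional algebra over a finite field is finite. It is worth remarking why the more direct route used in the \NIP{} case (Theorem~\ref{ZZZ}), namely producing a copy of $\mathbf F_p^{alg}$ inside $\Z(\R)$, is not obviously available in the \NTP{} setting: that argument rests on an infinite \NIP{} field of characteristic $p$ having \emph{no} Artin-Schreier extension (Fact~\ref{KSW2}), whereas Chernikov, Kaplan and Simon's result only yields \emph{finitely many} Artin-Schreier extensions for \NTP{} fields, which does not force $\mathbf F_p^{alg}$ into the field. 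Hence the detour through finite-dimensionality over the centre is the natural one, and it suffices.
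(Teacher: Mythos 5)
Your proof is correct and is exactly the argument the paper intends: the corollary is stated as an immediate consequence of Theorem~\ref{T:N}, with the characteristic $0$ case trivial (the centre contains $\mathbf Q$) and the characteristic $p$ case following because a division ring of finite dimension over a finite centre would itself be finite. Your closing remark about why the Artin-Schreier route of Theorem~\ref{ZZZ} is unavailable in the \NTP{} setting is also accurate, though not needed for the proof.
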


\bibliographystyle{plain}
\bibliography{mabib}

\end{document}